\documentclass[reqno,oneside,11pt]{amsart}

\usepackage[a4paper, hmargin={2.8cm, 2.8cm}, vmargin={2.5cm, 2.5cm}]{geometry}  
\usepackage[danish,english]{babel} 
\usepackage[latin1]{inputenc} 
\usepackage[T1]{fontenc} 
\usepackage{lmodern}
\usepackage{amsmath,amsthm,amssymb,amsfonts,mathrsfs,latexsym} 
\usepackage{graphicx} 
\usepackage{verbatim} 
\usepackage[all]{xy} 
\usepackage[pagebackref, colorlinks, linkcolor=red, citecolor=blue, urlcolor=blue, hypertexnames=true]{hyperref}  
\usepackage{multirow}
\usepackage{color}
\usepackage{enumerate}

\makeatletter
\newtheorem*{rep@theorem}{\rep@title}
\newcommand{\newreptheorem}[2]{%
\newenvironment{rep#1}[1]{%
 \def\rep@title{#2 \ref{##1}}%
 \begin{rep@theorem}}%
 {\end{rep@theorem}}}
\makeatother

\newtheorem{thm}{Theorem}[section]

\newreptheorem{thm}{Theorem}
\newtheorem{lem}[thm]{Lemma}
\newtheorem{prop}[thm]{Proposition}
\newtheorem{cor}[thm]{Corollary}
\newtheorem*{thm*}{Theorem}
\newtheorem*{problem*}{Problem}
\newtheorem{question}[thm]{Question}

\newtheorem*{claim*}{Claim}
\theoremstyle{definition}
\newtheorem{defi}[thm]{Definition}
\newtheorem{exem}[thm]{Example}

\newtheorem{rem}[thm]{Remark}

\newtheorem{conjecture2}[thm]{Conjecture}
\newcommand{\claimmark}{\hfill$\lozenge$}

\newcommand{\C}{\mathbb{C}}
\newcommand{\N}{\mathbb{N}}

\newcommand{\R}{\mathbb{R}}
\newcommand{\Z}{\mathbb{Z}}
\newcommand{\T}{\mathbb{T}}

\newcommand{\ra}{\rightarrow}
\renewcommand{\epsilon}{\varepsilon}
\renewcommand{\phi}{\varphi}

\renewcommand{\bar}{\overline}

\newcommand{\Manoa}{M\=anoa}

\newcommand{\Hawaii}{Hawai\kern.05em`\kern.05em\relax i}

\DeclareFontFamily{U}{mathx}{\hyphenchar\font45}
\DeclareFontShape{U}{mathx}{m}{n}{
      <5> <6> <7> <8> <9> <10>
      <10.95> <12> <14.4> <17.28> <20.74> <24.88>
      mathx10
      }{}
\DeclareSymbolFont{mathx}{U}{mathx}{m}{n}
\DeclareFontSubstitution{U}{mathx}{m}{n}
\DeclareMathAccent{\widecheck}{0}{mathx}{"71}
\DeclareMathAccent{\wideparen}{0}{mathx}{"75}

\numberwithin{equation}{section}

\begin{document}
\selectlanguage{english} 


\title[Low dimensional properties of uniform Roe algebras]{\texorpdfstring{Low dimensional properties of uniform Roe algebras}{Low dimensional properties of uniform Roe algebras}}


\author{Kang Li$^{1}$}
\address{Mathematisches Institut der WWU M\"unster,
\newline Einsteinstrasse 62, 48149 M\"unster, Germany}
\email{lik@uni-muenster.de}

\author{Rufus Willett$^{2}$}
\address{Department of Mathematics,
University of \Hawaii~at \Manoa,
\newline Honolulu, HI, 96822, U.S.A.}
\email{rufus@math.hawaii.edu}

\thanks{{$^{1}$} Supported by the Danish Council for Independent Research (DFF-5051-00037) and partially supported by the DFG (SFB 878).}

\thanks{{$^{2}$} Partially supported by the US NSF
(DMS-1401126 and DMS-1564281).}

\begin{abstract}

The goal of this paper is to study when uniform Roe algebras have certain $C^*$-algebraic properties in terms of the underlying space: in particular, we study properties like having stable rank one or real rank zero that are thought of as low dimensional, and connect these to low dimensionality of the underlying space in the sense of the asymptotic dimension of Gromov.  Some of these results (for example, on stable rank one, cancellation, strong quasidiagonality, and finite decomposition rank) give definitive characterizations, while others (on real rank zero) are only partial and leave a lot open.  

We also establish results about $K$-theory, showing that all $K_0$-classes come from the inclusion of the canonical Cartan in low dimensional cases, but not in general; in particular, our $K$-theoretic results answer a question of Elliott and Sierakowski about vanishing of $K_0$ groups for uniform Roe algebras of non-amenable groups.   Along the way, we extend some results about paradoxicality, proper infiniteness of projections in uniform Roe algebras, and supramenability from groups to general metric spaces.  These are ingredients needed for our $K$-theoretic computations, but we also use them to give new characterizations of supramenability for metric spaces.  
\end{abstract}

\date{\today}
\maketitle
\parskip 4pt

\noindent\textit{Mathematics Subject Classification (2010): }20F69, 46L05, 46L80, 55M10\\
\section{Introduction}

Uniform Roe algebras are $C^*$-algebras associated to discrete metric spaces: they reflect the `large-scale' or `coarse' geometry of the space.  This paper studies $C^*$-algebraic properties of uniform Roe algebras: we focus on properties that can be thought of as `low dimensional', as they typically only hold for $C(X)$ when $X$ has `small' covering dimension.  Examples include having stable rank one, having real rank zero, and having `topologically uninteresting' $K$-theory.  Taken together, our results suggest that the $C^*$-algebraic structure of a uniform Roe algebra $C^*_u(X)$ can only be expected to be `low dimensional' when $X$ is low dimensional, where the dimension of $X$ is to be understood in the sense of Gromov's asymptotic dimension.

We work with the following class of metric spaces.

\begin{defi}
A metric space $X$ has \emph{bounded geometry} if for any $r>0$ there is a uniform bound on the cardinalities of all $r$-balls in $X$.
\end{defi}

Countable discrete groups give rise to an important class of bounded geometry metric spaces: any such group admits a bounded geometry metric for which the left translation action of the group on itself is by isometries; moreover, any two such metrics give rise to the same large-scale geometric structure on $X$ (see for example \cite[Proposition 2.3.3]{Willett:2009rt}).  Essentially everything we study in this paper will be insensitive to the differences between such choices of metric, so we may canonically view groups as metric objects in this way.

Here is the definition of asymptotic dimension.

\begin{defi}\label{asdim}
A collection $\mathcal{U}$ of subsets of a metric space is \emph{$r$-separated} if for all distinct $A,B$ in $\mathcal{U}$ we have that $d(A,B)>r$.  A bounded geometry metric space has \emph{asymptotic dimension at most $d$} for some $d\in \N_0$ if for any $r>0$ there exists a decomposition 
$$
X=U_0\sqcup \cdots \sqcup U_d
$$
where each $U_i$ in turn decomposes into a uniformly bounded collection of $r$-separated subsets.
\end{defi}

We will mainly be interested in the special cases when $X$ has asymptotic dimension zero or one in this paper.

\begin{defi}
Let $X$ be a bounded geometry metric space, and let $\C_u[X]$ denote the $*$-algebra of all $X$-by-$X$ matrices $(a_{xy})$ with uniformly bounded entries in $\C$, and such that the \emph{propagation} 
$$
\text{prop}(a):=\sup\{d(x,y)\mid a_{xy}\neq 0\}
$$
is finite.  As $X$ has bounded geometry, $\C_u[X]$ acts on $\ell^2(X)$ by bounded operators.  The \emph{uniform Roe algebra} $C^*_u(X)$ of $X$ is the operator-norm closure of the image of $\C_u[X]$ in this representation.
\end{defi}

When $X=G$ is a countable discrete group equipped with a left-invariant bounded geometry metric we write $C^*_u|G|$ for the uniform Roe algebra of $G$ to avoid any possible confusion with the group $C^*$-algebras of $G$; this standard notation is meant to suggest that $|G|$ is the metric object underlying the group $G$.  In this case, there is a well-known isomorphism $C^*_u|G|\cong \ell^\infty(G)\rtimes_r G$, where the action of $G$ on $\ell^\infty(G)$ is induced by the right translation action of $G$ on itself: see for example \cite[Proposition 5.1.3]{Brown:2008qy}.

Our first goal in this paper is to show that many natural $C^*$-algebraic properties of $C^*_u(X)$ are equivalent to the space $X$ having asymptotic dimension zero.   The following theorem states the results actually proved in this paper; our real goal, however, is the corollary that comes afterwards, which follows on combining the theorem with well-known general results (see Section \ref{asdim0 sec} for background and references).

\begin{thm}
Let $X$ be a bounded geometry metric space. Then if $X$ has asymptotic dimension zero, $C^*_u(X)$ is an inductive limit of finite dimensional $C^*$-algebras.  

Moreover, assume $C^*_u(X)$ has any of the following properties: finite decomposition rank; strong quasidiagonality; cancellation.  Then $X$ has asymptotic dimension zero.
\end{thm}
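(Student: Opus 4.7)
My plan is to treat the two implications separately.

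\textbf{Forward direction.} For $\operatorname{asdim}(X) = 0 \Rightarrow C^*_u(X)$ AF, I would exploit the partitions that asymptotic dimension zero supplies. For each $r > 0$, Definition~\ref{asdim} combined with bounded geometry gives a decomposition $X = \bigsqcup_{i \in I_r} X_i^{(r)}$ into $r$-separated pieces of cardinality uniformly bounded by some $N_r$. Let $A_r \subseteq C^*_u(X)$ be the block-diagonal sub-$C^*$-algebra for this partition, so that $A_r \cong \prod_{i \in I_r} M_{|X_i^{(r)}|}(\C)$; any element of $\C_u[X]$ of propagation $\leq r$ lies in $A_r$, so $\overline{\bigcup_r A_r} = C^*_u(X)$. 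After refining so $A_r \subseteq A_{r'}$ for $r \leq r'$, this presents $C^*_u(X)$ as an inductive limit of the $A_r$'s. Each $A_r$ is itself AF in the norm-closed-union sense: given $a = (a_i)_{i \in I_r} \in A_r$ and $\varepsilon > 0$, the bounded set $\{a_i : i \in I_r\}$ sits inside the fixed finite-dimensional space $M_{N_r}(\C)$ and hence admits a finite $\varepsilon$-net, whose preimage coarsens the partition of $I_r$ into finitely many ``labels'' and places $a$ within $\varepsilon$ of a finite-dimensional subalgebra of $A_r$. Since increasing unions of AF algebras are AF, the conclusion follows.

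\textbf{Reverse direction.} For the second claim I would argue the contrapositive: if $\operatorname{asdim}(X) \geq 1$, each of the three listed properties fails. The hypothesis supplies $r > 0$ such that the equivalence classes of the transitive closure of ``$d \leq r$'' are not uniformly bounded in cardinality. Bounded geometry (in the form $|B(x,d)| \leq C^{d/r}$) forces any such class $C$ of cardinality $N$ to have diameter $\gtrsim \log_C N$, so I extract a sequence of pairwise distinct classes $C_n$ with $\operatorname{diam}(C_n) \to \infty$, and a further subsequence yields $d(C_n, C_m) \to \infty$ for distinct $n, m$. In each $C_n$, choose a pair $y_n^0, y_n^{k_n}$ realising $\operatorname{diam}(C_n)$ and join them by an injective $r$-chain $y_n^0, y_n^1, \ldots, y_n^{k_n}$ inside $C_n$; then both $k_n$ and $d(y_n^0, y_n^{k_n})$ tend to infinity. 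The ``shift along the paths'' $s := \sum_n \sum_{i=0}^{k_n - 1} e_{y_n^{i+1}, y_n^{i}}$ has propagation $\leq r$ and therefore lies in $C^*_u(X)$. Writing $E_s := \sum_n e_{y_n^0, y_n^0}$, $E_e := \sum_n e_{y_n^{k_n}, y_n^{k_n}}$, and $R$ for the projection onto the remaining (interior) path points, a direct calculation gives $s^*s = R + E_s$ and $s s^* = R + E_e$, so that $R + E_s \sim R + E_e$ in $C^*_u(X)$. Cancellation would then force $E_s \sim E_e$ via some $v \in C^*_u(X)$ of finite propagation $D$; but $d(C_n, C_m) > D$ for large $n \neq m$ confines the matching effected by $v$ to be intra-class, which in turn requires matching $y_n^0$ to $y_n^{k_n}$ within distance $D$, contradicting $d(y_n^0, y_n^{k_n}) \to \infty$. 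Hence cancellation fails; failure of finite decomposition rank then follows from Winter's implications (finite decomposition rank $\Rightarrow$ stable rank one $\Rightarrow$ cancellation), and failure of strong quasidiagonality follows by passing to a quotient of $C^*_u(X)$ in which the image of $s$ becomes a proper non-unitary isometry, an obstruction to quasidiagonality.

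\textbf{Main obstacle.} I anticipate the principal difficulty to lie in the reverse direction in the case where every $r$-equivalence class is finite (for instance $X = \bigsqcup_n \{1, \ldots, n\}$): here $C^*_u(X)$ can remain stably finite, so a blanket stable-infiniteness obstruction is unavailable, and one must instead play the bounded-propagation structure of $C^*_u(X)$ against the unbounded intra-class diameter of $X$. Identifying the correct quotient of $C^*_u(X)$ in which $s$ becomes a genuine non-unitary isometry---which is what drives the strong-quasidiagonality failure---is where I expect the technical heart of the argument to sit; the AF direction is by comparison largely bookkeeping once the inductive system of block-diagonal subalgebras is in place.
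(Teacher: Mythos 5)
Your forward direction, as written, proves less than the statement claims, and your reverse direction has three concrete problems. Taking the reverse direction first: negating Definition \ref{ad0def} only gives classes of $\sim_r$ that are ``infinite, or finite and arbitrarily large''. If the failure of asymptotic dimension zero comes from a single infinite class (e.g.\ $X=\Z$ with $r=1$, where there is exactly one class), there is no sequence of pairwise distinct classes with diameters tending to infinity, and no pair ``realising the diameter'' of an infinite class, so your extraction has nothing to extract; your ``main obstacle'' paragraph identifies the wrong hard case. The paper's Lemma \ref{asdim0} is designed precisely for this: it recursively extracts longer and longer \emph{finite} segments $S_n$, deleting neighbourhoods of the previously chosen ones to keep them far apart, uniformly in whether the classes are finite or infinite. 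Next, the partial isometry implementing $E_s\sim E_e$ lies in $C^*_u(X)$ but has no reason to have finite propagation; one must approximate it within $1/2$ by a finite-propagation operator and derive the contradiction quantitatively, which is exactly how the paper argues -- repairable, but as stated your step is unjustified. Most seriously, the chain ``finite decomposition rank $\Rightarrow$ stable rank one $\Rightarrow$ cancellation'' is not available: the first implication is only known under extra hypotheses (simple, separable, unital) that uniform Roe algebras never satisfy, and finite decomposition rank does \emph{not} imply cancellation in general (the paper points out commutative counterexamples $C(Y)$ with $Y$ finite-dimensional). The paper instead shows that finite decomposition rank passes to quotients and forbids proper isometries there, and -- this is also the admitted missing piece in your strong quasidiagonality case -- constructs an explicit quotient, via the limit-space machinery of \v{S}pakula--Willett, in which the shift along the segments becomes a proper isometry of the form $1_{X(\omega)\setminus S}+u$. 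So both the finite decomposition rank and strong quasidiagonality implications rest on a construction your proposal does not supply.

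For the forward direction: the theorem asserts that $C^*_u(X)$ is an inductive limit of finite-dimensional $C^*$-algebras, and since $C^*_u(X)$ is non-separable this is strictly stronger than the local approximation you establish (Bratteli's equivalence of the two notions fails for non-separable algebras by Farah--Katsura, a point the paper is explicit about). Your $A_r\cong\prod_{i\in I_r}M_{|X_i^{(r)}|}(\C)$ are not finite dimensional, and ``increasing unions of AF algebras are AF'' does not yield a directed family of finite-dimensional subalgebras with dense union: two finite-dimensional subalgebras of such a product need not be contained in a common finite-dimensional subalgebra (two projections in general position already generate something infinite dimensional), so directedness is a real issue, not bookkeeping. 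The paper fixes a compatible family -- the subalgebras $A_{r,\mathcal{P}}$ of block-diagonal operators constant on the colours of a finite partition $\mathcal{P}$ of the $r$-classes -- and verifies that these form a directed set, the key step being that for $(r,\mathcal{P})$ and $s\geq r$ one can build a partition $\mathcal{Q}$ of the $s$-classes with $A_{r,\mathcal{P}}\subseteq A_{s,\mathcal{Q}}$. Your $\varepsilon$-net/labels device is essentially the paper's density argument, but the directedness argument is the part that upgrades ``AF in the local sense'' to the claimed inductive limit, and it is missing from your proposal.
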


\begin{cor}
Let $X$ be a bounded geometry metric space.   The following are equivalent:
\begin{enumerate}
\item $X$ has asymptotic dimension zero;
\item $C^*_u(X)$ is an inductive limit of finite dimensional $C^*$-subalgebras;
\item $C^*_u(X)$ is AF in the local sense;
\item $C^*_u(X)$ is strongly quasidiagonal;
\item $C^*_u(X)$ has decomposition rank zero;
\item $C^*_u(X)$ has finite decomposition rank;
\item $C^*_u(X)$ has nuclear dimension zero;
\item $C^*_u(X)$ has stable rank one;
\item $C^*_u(X)$ has cancellation.
\end{enumerate}
\end{cor}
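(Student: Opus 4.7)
My plan is to arrange a circular chain of implications, drawing on the theorem together with standard $C^*$-algebraic facts.  The implication (1)$\Rightarrow$(2) is exactly the first half of the theorem.  To get (2) implying each of (3)--(9) I would invoke the standard properties of AF $C^*$-algebras: (2)$\Rightarrow$(3) is trivial; AF algebras are strongly quasidiagonal by Voiculescu's argument, giving (4); by the Kirchberg--Winter computation they have decomposition rank zero, giving (5), and since $\dim_{\mr{nuc}}\le \mr{dr}$ also (7), and a fortiori (6); finally AF algebras have stable rank one by Rieffel, giving (8), and hence cancellation (9).

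To close the cycle it suffices to show that each of (3), (5), (7), (8) implies at least one of the three properties (4), (6), or (9) handled by the second half of the theorem.  This is done by: (5)$\Rightarrow$(6) trivially; (7)$\Rightarrow$(8) by the standard fact that nuclear dimension zero forces stable rank one (Winter--Zacharias); (8)$\Rightarrow$(9) by Rieffel's cancellation theorem; and (3)$\Rightarrow$(4) by an $\epsilon/3$ approximation argument showing that a $C^*$-algebra locally approximated by finite dimensional subalgebras inherits strong quasidiagonality from them (in any faithful representation, one pulls a finite set back to a near-finite-dimensional subalgebra and applies the trivial quasidiagonality of finite-dimensional algebras).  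With these four closing arrows in place, the theorem delivers (1) from (4), (6), or (9), completing the cycle.

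The substantive mathematical content is entirely contained in the theorem; what remains here is only assembling the chain.  The principal thing to watch is that $C^*_u(X)$ is non-separable whenever $X$ is infinite, so one must be sure the invoked classical results (Voiculescu, Kirchberg--Winter, Rieffel, Winter--Zacharias, and the passage of strong quasidiagonality through local approximation) do not require a separability hypothesis; each of them is either local in nature or known in full generality, so no additional ideas are needed beyond the theorem.
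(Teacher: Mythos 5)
Your assembly is essentially the paper's own argument: the paper likewise reduces the corollary to the theorem's implications $(1)\Rightarrow(2)$ and $(4),(6),(9)\Rightarrow(1)$, and closes the loop with the same standard facts ($(2)\Rightarrow(3)$, $(3)\Leftrightarrow(5)\Leftrightarrow(7)$ via Winter--Zacharias, $(5)\Rightarrow(6)$, $(3)\Rightarrow(4)$, $(3)\Rightarrow(8)\Rightarrow(9)$), with the same attention to non-separability. One small wording fix: in your $(3)\Rightarrow(4)$ step drop \emph{faithful} --- strong quasidiagonality asks that every representation be quasidiagonal, and your finite-dimensional approximation argument does work verbatim for arbitrary representations, so nothing else changes.
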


This is related, but complementary, to results of Scarparo \cite{Scarparo:2016kl} and Wei \cite{Wei:2011kl}, who study finiteness properties for uniform Roe algebras.

Spaces with asymptotic dimension zero are rather rare: for example, a countable group has asymptotic dimension zero if and only it is locally finite (this means that any finite subset generates a finite subgroup).  Thus the corollary says that the properties involved only hold for uniform Roe algebras of a very restricted class of spaces.

We also study real rank zero.  Here we are not able to say anything definitive, but at least get the following result in Section \ref{rr0 sec} by using $K$-theoretic methods to exhibit projections in quotient algebras that do not lift (the second author thanks Chris Phillips for suggesting studying liftings of projections in relation to this problem).

\begin{thm}
The uniform Roe algebra of $\Z^2$ does not have real rank zero.  
\end{thm}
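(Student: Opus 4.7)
My plan is to use the standard criterion: $C^*_u(\Z^2)$ has real rank zero if and only if, for every closed two-sided ideal $I$, every projection in $C^*_u(\Z^2)/I$ lifts to a projection in $C^*_u(\Z^2)$. I will exhibit $I$ and a projection $\bar p \in C^*_u(\Z^2)/I$ whose class in $K_0$ has nonzero image under the exponential boundary map
\[
\partial_0 \colon K_0\bigl(C^*_u(\Z^2)/I\bigr) \to K_1(I).
\]
A projection lift $p$ of $\bar p$ would give $\partial_0[\bar p]=[\exp(2\pi i p)]=0$, so finding such a $\bar p$ rules out lifting, and hence rules out real rank zero.

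For the ideal, I would take a coarse codimension-one slice: set $L = \Z\times\{0\}\subseteq \Z^2$ and let $I$ be the closure of those operators whose matrix entries are supported, for some $R$, in the $R$-neighborhood of $L\times L$. Propagation estimates show this is a closed two-sided ideal of $C^*_u(\Z^2)$, and the corner cut out by $p_L = \chi_L$ is canonically $C^*_u(L)\cong C^*_u(\Z)$; a Morita-equivalence argument then gives $K_*(I)\cong K_*(C^*_u(\Z))$. The key fact is that $K_1(C^*_u(\Z))$ is nonzero, with the bilateral shift defining a distinguished nontrivial class.

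For $\bar p$, I would build a finite-propagation truncation of a Bott-type element from the horizontal and vertical bilateral shifts $u,v \in C^*_u(\Z^2)$, modeled on the classical Bott projector in $z = u + \mathrm{i}v$ playing the role of coordinates on $\Z^2$. After truncation this is a projection modulo $I$, and the core of the proof is an index computation identifying $\partial_0[\bar p] \in K_1(I)\cong K_1(C^*_u(\Z))$ with a nonzero multiple of the shift class; this is the coarse-index avatar of the two-dimensional Bott element. The principal obstacle is this explicit index calculation, and it may be cleaner to carry it out via a Toeplitz / Wiener-Hopf extension coming from the half-plane projection $\chi_{\{y\geq 0\}}$, where $\partial_0$ can be read off from an index theorem for half-plane operators. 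Either way the underlying mechanism is the same: $\Z^2$ has enough coarse two-dimensional topology to produce a nonvanishing Bott/Dirac-type index on its codimension-one boundary, which obstructs projection lifting, and therefore obstructs real rank zero.
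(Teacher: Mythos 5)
Your setup matches the paper's quite closely: the same ideal $I$ (your ``operators supported near $\Z\times\{0\}$'' ideal is exactly the ideal generated by the characteristic function of the $x$-axis), the same full-corner identification of the $K$-theory of the ideal with that of a Roe algebra of $\Z$ (the paper does this for the non-uniform ideal $J$, citing Exel), the same candidate obstruction (a half-plane compression of a Bott-type projection, which is a projection modulo $I$ because $\chi_{\{y\geq 0\}}$ is central modulo $I$), and the same lifting criterion --- though note only the implication ``real rank zero $\Rightarrow$ projections in (matrices over) quotients lift'' is true and needed, and you need it for $M_2$, which is fine since real rank zero passes to matrix algebras. The genuine gap is that the engine of your argument, the claim that $\partial_0[\bar p]$ is a nonzero multiple of the shift class in $K_1(I)\cong\Z$, is precisely the hard content and is nowhere established: you flag it yourself as the principal obstacle. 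Note also that, as literally described, any Bott projector built from the commuting unitaries $u,v$ lies in $M_2(C(\T^2))\subseteq M_2(C^*_u|\Z^2|)$, hence its image in the quotient lifts tautologically and has zero boundary; the nontrivial class only appears after compression by $\chi_{\{y\geq 0\}}$, and proving that this compressed class has nonzero boundary is a bona fide half-plane Toeplitz / partitioned-space index theorem, not a routine verification.

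The paper's proof (Lemma \ref{no lift}) is structured precisely to avoid evaluating any boundary map. It passes to the non-uniform Roe algebra $C^*|\Z^2|$ and its ideal $J$, where Pimsner--Voiculescu computations (Lemma \ref{bott lem}) give $K_0(J)=0$, $K_1(C^*|\Z^2|)=0$, and $K_0(C^*|\Z^2|)\cong\Z$ generated by a Chern-class-one projection $b\in M_2(C(\T^2))$; injectivity of $\pi_*$ on $K_0$ plus the reflection $(x,y)\mapsto(x,-y)$, which sends $[b]$ to $-[b]$ and swaps the two half-planes modulo the ideal, yields the parity contradiction $n=1-n$ if both half-plane compressions $p,q$ of $b$ lifted to projections. (In fact the same reflection argument, run on $K_0$-classes of the quotient rather than on hypothetical lifts, shows $[p]\notin\operatorname{im}\pi_*$, i.e.\ $\partial[p]\neq 0$, so the index statement you assert is true; but to complete your route you must either prove the half-plane index theorem you invoke, or import these two inputs --- the PV computation of $K_*(C^*|\Z^2|)$ and $K_*(J)$, and the action of the reflection. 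Staying entirely inside the uniform algebra makes the first input unavailable, since $K_0(C^*_u|\Z^2|)$ is enormous rather than $\Z$, which is exactly why a direct computation of $\partial_0$ cannot be sidestepped there.)
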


This can be bootstrapped up to show that for many natural spaces $X$, $C^*_u(X)$ does not have real rank zero.  Nonetheless, we leave open the interesting questions of whether $C^*_u|\Z|$ or $C^*_u(\N)$ have real rank zero.  

Our next goals are $K$-theoretic.   First, note that the uniform Roe algebra of a bounded geometry space $X$ contains a copy of $\ell^\infty(X)$ as diagonal matrices.  The following theorem from Section \ref{asdim1 sec} extends work of Elliott and Sierakowski \cite[Section 5]{MR3508496} on the case when $X$ is a free group.  

\begin{thm}
Let $X$ be a metric space with bounded geometry and let $\delta:{\ell}^\infty(X)\ra C_u^*(X)$ be the inclusion of the diagonal. If the asymptotic dimension of $X$ is at most one, then the induced map
\begin{align*}
\delta_*: K_0({\ell}^\infty(X))\ra K_0(C_u^*(X))
\end{align*}
is surjective.
\end{thm}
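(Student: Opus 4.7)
The plan is to reduce the surjectivity of $\delta_*$ to the asymptotic dimension zero case already handled in the previous section: if $Y$ has asdim zero, then $C^*_u(Y)$ is an $\ell^\infty$-direct sum of matrix algebras of uniformly bounded size (in particular AF), and its $K_0$ is manifestly generated by the classes of diagonal projections in $\ell^\infty(Y)$.

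First, since $M_n(C^*_u(X))\cong C^*_u(X\times\{1,\ldots,n\})$ for the product $X\times\{1,\ldots,n\}$ (still of asdim at most one) and $M_n(\ell^\infty(X))\cong \ell^\infty(X\times\{1,\ldots,n\})$, we may represent any class in $K_0(C^*_u(X))$ by a projection $p\in C^*_u(X)$ itself. Approximate $p$ by a self-adjoint $a\in\C_u[X]$ of propagation at most $s$, close enough in norm that $p=\chi_{(1/2,\infty)}(a)$. Using asdim at most one at some scale $r\gg s$, decompose $X=U_0\sqcup U_1$ with $U_i=\bigsqcup_\alpha B_{i,\alpha}$ uniformly bounded and $r$-separated. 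Each $U_i$ has asdim zero, and the hereditary corner $C^*_u(U_i):=1_{U_i}C^*_u(X)1_{U_i}$ is precisely an $\ell^\infty$-sum of the matrix algebras $M_{|B_{i,\alpha}|}(\C)$; its $K_0$ is therefore generated by diagonal projections in $\ell^\infty(U_i)$.

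The crux of the argument is to show that $[p]=[p_0]+[p_1]$ in $K_0(C^*_u(X))$ for some projections $p_i\in C^*_u(U_i)$. One natural route is a coarse Mayer--Vietoris sequence for uniform Roe algebras attached to a slight thickening $V_i\supset U_i$, chosen so that $V_0\cup V_1=X$, that $V_0\cap V_1$ sits in a uniformly bounded boundary region of asdim zero, and that a six-term exact sequence
\[
\cdots\to K_0(C^*_u(V_0\cap V_1))\to K_0(C^*_u(V_0))\oplus K_0(C^*_u(V_1))\to K_0(C^*_u(X))\to K_1(C^*_u(V_0\cap V_1))\to\cdots
\]
applies. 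Since all terms to the left of $K_0(C^*_u(X))$ arise from asdim zero spaces, they are generated by diagonal projections in the respective $\ell^\infty$'s, so the middle map is surjective onto $K_0(C^*_u(X))$ and its image lies in $\delta_*(K_0(\ell^\infty(X)))$.

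The main obstacle is to set up this Mayer--Vietoris sequence precisely for uniform Roe algebras of (not necessarily equivariantly excisive) coarse covers, and to verify that the thickenings can genuinely be arranged to preserve the asdim zero property in the overlap. An alternative, more hands-on route that avoids abstract Mayer--Vietoris is to produce the splitting $[p]=[p_0]+[p_1]$ by direct Murray--von Neumann manipulation: since the off-diagonal part $1_{U_0}a1_{U_1}+1_{U_1}a1_{U_0}$ is supported only on pairs within distance $s\ll r$ across the boundary, its polar decomposition yields partial isometries of finite propagation implementing equivalences between sub-projections of the block-diagonal compressions $1_{U_i}p1_{U_i}$; these compressions already sit in the AF corners $C^*_u(U_i)$, and correcting them by these boundary partial isometries shows that $[p]$ is the sum of two classes coming from $K_0(C^*_u(U_0))\oplus K_0(C^*_u(U_1))$, each of which lies in $\delta_*(K_0(\ell^\infty(X)))$ by the asdim zero case.
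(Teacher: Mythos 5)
The two assertions on which your reduction rests are both false, and they hide the actual difficulty. The decomposition $X=U_0\sqcup U_1$ furnished by asymptotic dimension at most one is a decomposition at the \emph{single} scale $r$: the families $\{B_{i,\alpha}\}$ are uniformly bounded and $r$-separated for that one $r$ only, so the subspaces $U_i$ need not have asymptotic dimension zero. For example, for $X=\Z$ with blocks of length $L$ separated by gaps of length $L$, each $U_i$ is coarsely equivalent to $\Z$ itself. Consequently the corner $1_{U_i}C^*_u(X)1_{U_i}=C^*_u(U_i)$ is \emph{not} an $\ell^\infty$-sum of the matrix algebras $M_{|B_{i,\alpha}|}(\C)$: it contains operators of propagation between $r$ and arbitrarily large values which hop between distinct blocks, and in the example above it is again (isomorphic to) a uniform Roe algebra of a space coarsely equivalent to $\Z$, so claiming its $K_0$ is generated by diagonal projections is circular. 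Only the subalgebra of operators block-supported over the (slightly thickened) $B_{i,\alpha}$ is a product of uniformly bounded matrix algebras, and that subalgebra is neither a corner nor an ideal, nor closed under the limits and functional calculus needed to run a six-term sequence. This defect sinks both of your routes to the splitting $[p]=[p_0]+[p_1]$: a fixed-scale decomposition is not coarsely excisive, and the honest ideals of operators supported near $V_i$ degenerate (for $X=\Z$ the $(D+1)$-neighbourhood of each $U_i$ is already all of $X$, so those ideals are all of $C^*_u(X)$); while in the hands-on route the projection $p=\chi_{(1/2,\infty)}(a)$ no longer has finite propagation, so $1_{U_i}p1_{U_i}$ is neither block-diagonal nor a projection, and making the ``correction by boundary partial isometries'' precise at a fixed scale is exactly the content of the theorem, not a remark.

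Your geometric intuition is nonetheless the right one, and it is essentially how the paper argues---but implemented in controlled (quantitative) $K$-theory precisely to repair this gap. The paper replaces $p$ by a quasi-projection $q$ of some fixed propagation $r_0$, filters $C^*_u(X)\otimes\mathcal{K}$ by the subspaces $B(U)_r$, $B(V)_r$, $B(U\cap V)_r$ of operators block-supported over the thickened pieces, and applies the controlled Mayer--Vietoris principle of \cite{Guentner:2014bh} to the uniformly excisive pair of filtered ideals $I^\omega$, $J^\omega$ built from them. The boundary obstruction then lives in the \emph{controlled} $K_1$ of the overlap algebra $B(U\cap V)_{r_1}$, which, because the two block families are $r$-separated with $r$ much larger than the scales in play, is a product of matrix algebras of uniformly bounded size and so has vanishing $K_1$; the resulting controlled classes over $U$ and $V$ factor through $K_0$ of such products, hence through the image of $\delta_*$. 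To close the gap you would need this controlled machinery, or an equivalent quantitative argument carried out entirely at finite propagation; no argument that passes to the ordinary $K$-theory of the full subalgebras $C^*_u(U_i)$, or to genuine ideals of $C^*_u(X)$, can work as stated.
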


Spaces with asymptotic dimension one are again quite special: examples include $\Z$, non-abelian free groups, and various other `tree-like' spaces.  For such spaces, the theorem above says that the $K_0$ groups of $C^*_u(X)$ do not contain any interesting topological information: indeed, from the point of view of the (uniform) coarse Baum-Connes conjecture (see \cite{Spakula:2009tg} or \cite{Engel:2014tx}),  the image of $\delta_*$ is precisely the zero-dimensional part of $K_0(C^*_u(X))$.

Using some results about properly infinite projections in uniform Roe algebras from Section \ref{proj sec} (where we also extend some results about supramenability and properly infinite projections from groups to metric spaces), this has the following corollary, which again generalizes a computation of Elliott and Sierakowski for finitely generated non-abelian free groups.

\begin{cor}
Let $X$ be a non-amenable bounded geometry metric space with asymptotic dimension at most one.  Then $K_0(C^*_u(X))=0$.
\end{cor}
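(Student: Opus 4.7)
The plan is to combine the surjectivity from the previous theorem with the proper-infiniteness results of Section~\ref{proj sec}. Since $\mathrm{asdim}(X)\le 1$, the map $\delta_*\colon K_0(\ell^\infty(X))\to K_0(C^*_u(X))$ is surjective; and since $\ell^\infty(X)\cong C(\beta X)$ with $\beta X$ totally disconnected, $K_0(\ell^\infty(X))\cong C(\beta X,\Z)$ is generated as an abelian group by the classes $[\chi_A]$ of characteristic functions of subsets $A\subseteq X$. So the corollary reduces to showing $\delta_*[\chi_A]=0$ for every $A\subseteq X$.

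The driving mechanism is the standard fact that a properly infinite projection $p$ in any unital $C^*$-algebra $B$ satisfies $[p]=0$ in $K_0(B)$: proper infiniteness provides orthogonal subprojections $p_1,p_2\le p$ with $p\sim p_1\sim p_2$, whose implementing partial isometries assemble into a single element of $M_2(B)$ witnessing $p\oplus p\precsim p$; together with the trivial $p\precsim p\oplus p$ this gives $[p]=2[p]$ in $K_0$ and hence $[p]=0$. The anticipated content of Section~\ref{proj sec}, extending Tarski-style paradoxical decomposition and proper-infiniteness results from groups to bounded geometry metric spaces, should say that when $X$ is non-amenable every infinite $A\subseteq X$ produces a properly infinite diagonal projection $\delta(\chi_A)\in C^*_u(X)$. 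For any such $A$ we then conclude $\delta_*[\chi_A]=0$.

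For finite $A$ the projection $\delta(\chi_A)$ has finite rank and cannot itself be properly infinite, so I argue by complementation. Non-amenability forces $X$ to be infinite, so $X\setminus A$ is infinite and $\delta_*[\chi_{X\setminus A}]=0$ by the previous paragraph; the same statement with $A$ replaced by $X$ gives $\delta_*[\chi_X]=0$. Since $[\chi_X]=[\chi_A]+[\chi_{X\setminus A}]$ in $K_0(\ell^\infty(X))$, applying $\delta_*$ gives
\[
0=\delta_*[\chi_X]=\delta_*[\chi_A]+\delta_*[\chi_{X\setminus A}]=\delta_*[\chi_A],
\]
finishing the argument. The real work is therefore entirely packaged in Section~\ref{proj sec}: one needs a coarse analogue of Tarski's theorem producing, for every infinite subset of a non-amenable bounded geometry space, the pair of orthogonal partial isometries in $C^*_u(X)$ witnessing proper infiniteness of $\chi_A$. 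Once those ingredients are available, the $K$-theoretic step above is immediate.
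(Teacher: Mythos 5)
There is a genuine gap, and it sits exactly where you admit the ``real work'' is: the claim that for a non-amenable $X$ \emph{every} infinite subset $A\subseteq X$ yields a properly infinite projection $\delta(\chi_A)\in C^*_u(X)$ is false. What Section \ref{proj sec} actually proves (Proposition \ref{Para_subset}) is that $1_A$ is properly infinite if and only if $A$ is \emph{paradoxical as a subspace}, equivalently non-amenable with its induced metric, and non-amenable spaces contain plenty of infinite amenable subsets. For instance, in $F_2$ (non-amenable, asymptotic dimension one) the set $A=\{a^n\mid n\geq 0\}$ is isometric to $\N$, hence amenable and not paradoxical, so $1_A$ is not properly infinite and your driving mechanism gives nothing for it. Your complementation fallback only treats finite $A$, and it cannot be upgraded: a non-amenable bounded geometry space can be partitioned into two amenable subspaces (e.g.\ cut a trivalent tree into ``bands'' of levels $[2^{2n},2^{2n+1})$ and their complement; each piece is a union of finite blocks whose mutual separation tends to infinity, hence amenable), so for such an $A$ neither $\chi_A$ nor $\chi_{X\setminus A}$ is properly infinite and your argument stalls.

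The fix, which is the route the paper takes (Lemmas \ref{full triv} and \ref{para lem}), is to apply proper infiniteness not to $1_A$ but to suitable \emph{full} projections. Fix a paradoxical decomposition $X=X_1\sqcup X_2$ with partial translations $t_i:X\to X_i$; the associated partial isometries show $1_{X_1},1_{X_2}$ are full in $C^*_u(X)$. For arbitrary $A$ write $1_A=1_{A\cap X_1}+1_{A\cap X_2}$ and observe $1_X-1_{A\cap X_1}\geq 1_{X_2}$ and $1_X-1_{A\cap X_2}\geq 1_{X_1}$, so both complements are full; by Theorem \ref{p.i-para} and Corollary \ref{generate ideal} any projection of $\ell^\infty(X)$ that is full generates the same ideal as the properly infinite unit and is therefore properly infinite, hence has trivial $K_0$-class, and the same holds for $[1_X]$. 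Consequently $[1_{A\cap X_i}]=[1_X]-[1_X-1_{A\cap X_i}]=0$ and $\delta_*[\chi_A]=0$ for every $A$, with no case distinction on whether $A$ is paradoxical. Your reduction of $K_0(\ell^\infty(X))\cong C(\beta X,\Z)$ to classes of characteristic functions and the final combination with Theorem \ref{delta_sur} are fine; it is only the proper-infiniteness input that must be rerouted through fullness of complements rather than through $1_A$ itself.
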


Examples include finitely generated non-abelian free groups, many other more `exotic' groups such as the lamplighter-type group $(\Z/2\Z)\wr F_2$, and also any bounded degree tree with all vertices of degree at least $3$.

Finally, in Section \ref{high dim sec}, we show that the analogues of the theorem and corollary above fail for many spaces of higher dimension; in other words, the $K_0$ groups of such uniform Roe algebras do see interesting topological structure.  Without stating our results in maximal generality in this introduction, here is a sample theorem.

\begin{thm}
Let $X$ be the fundamental group of a closed orientable surface (for example, $X=\Z^2$, the fundamental group of the two-torus).  Then the map
\begin{align*}
\delta_*: K_0({\ell}^\infty(X))\ra K_0(C_u^*(X))
\end{align*}
is not surjective.
\end{thm}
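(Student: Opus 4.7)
\textit{Proof plan.} The strategy is to exhibit an element of $K_0(C^*_u(X))$ that cannot lie in the image of $\delta_*$, by detecting it with a cyclic $2$-cocycle that vanishes on the diagonal copy of $\ell^\infty(X)$ but pairs non-trivially with an explicit Bott-type class. I will spell out the case $X=\Z^2$ called out in the statement; the case of higher-genus surface groups runs along the same lines, with the coordinate derivations below replaced by a group $2$-cocycle representing the generator of $H^2(X;\Z)\cong\Z$ (the fundamental cohomology class), and the corresponding non-trivial $K$-theory class produced from $[\Sigma_g]\in K_0(\Sigma_g)$ via the Baum--Connes assembly map (known to be an isomorphism for surface groups).

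For $X=\Z^2$, define coordinate derivations $\partial_j:\C_u[\Z^2]\to\C_u[\Z^2]$ ($j=1,2$) by $(\partial_j a)_{xy}:=(x_j-y_j)a_{xy}$; these are bounded on the finite-propagation algebra. Let $\tau$ be the canonical tracial state on $C^*_u(\Z^2)$ coming from an invariant mean on the amenable group $\Z^2$, i.e.\ $\tau(f u_g)=\delta_{g,e}\mu(f)$ for $\mu$ translation-invariant. Set
\[
\phi(a_0,a_1,a_2):=\tau\bigl(a_0(\partial_1 a_1\cdot\partial_2 a_2-\partial_2 a_1\cdot\partial_1 a_2)\bigr),
\]
which one checks to be a cyclic $2$-cocycle on $\C_u[\Z^2]$. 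Since every $a\in\ell^\infty(\Z^2)$ is diagonal we have $\partial_j a=0$, so $\phi$ vanishes identically on $\ell^\infty(\Z^2)$, and therefore $\langle\phi,\delta_*[e]\rangle=0$ for every $[e]\in K_0(\ell^\infty(\Z^2))$.

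Next, consider the canonical unital inclusion $\iota:C^*_r(\Z^2)\cong C(\T^2)\hookrightarrow C^*_u(\Z^2)$ induced by $\C\hookrightarrow\ell^\infty(\Z^2)$, and let $b\in M_2(C(\T^2))$ be the Bott projection, representing the ``fundamental'' generator of $K_0(C(\T^2))\cong\Z^2$. Using the explicit formula $\partial_j u_{(m,n)}=m_j u_{(m,n)}$ and identifying $u_{(1,0)},u_{(0,1)}$ with the coordinate functions $e^{i\theta_1},e^{i\theta_2}$ on $\T^2$, one sees that the restriction of $\phi$ along $\iota^{\otimes 3}$ is, up to a universal constant, the classical volume cyclic $2$-cocycle on $C^\infty(\T^2)$, and $\tau\circ\iota$ is the normalized integral against $d\theta_1\,d\theta_2/(2\pi)^2$. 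Its pairing with the Bott class is therefore the first Chern number $\pm 1\neq 0$, which shows that $\iota_*[b]\in K_0(C^*_u(\Z^2))$ is not in the image of $\delta_*$.

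The main technical obstacle is promoting $\phi$ from an algebraic cocycle on $\C_u[\Z^2]$ to a well-defined pairing with $K_0(C^*_u(\Z^2))$: one must identify a dense Fr\'echet subalgebra of $C^*_u(\Z^2)$ that is closed under holomorphic functional calculus, contains both $\iota(b)$ and the diagonal projections needed to represent classes in $\delta_*(K_0(\ell^\infty(\Z^2)))$, and on which $\phi$ is continuous. This is a standard instance of Connes' smooth-subalgebra framework, but requires careful bookkeeping in this infinite-dimensional non-separable setting. The remaining pieces (verifying cyclicity of $\phi$, identifying the restriction along $\iota$ with the classical volume cocycle, and the Chern-number calculation) are all classical.
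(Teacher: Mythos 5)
Your $\Z^2$ argument is essentially correct, and it is a genuinely different route from the paper's: the paper deduces the torus case from the general Theorem \ref{cbc the}, realizing $\Z^2$ as a net in $\R^2$ and combining the coarse Baum--Connes conjecture with uniform index theory (Lemmas \ref{linf0} and \ref{dir lem}), whereas you detect a Bott class directly with a cyclic $2$-cocycle. The ``technical obstacle'' you flag is indeed standard: your derivations generate the dual $\T^2$-action on $\ell^\infty(\Z^2)\rtimes_r\Z^2\cong C^*_u(\Z^2)$, the smooth vectors for this action form a dense Fr\'echet subalgebra closed under holomorphic functional calculus which contains $\iota(b)$ and all diagonal projections (these are fixed by the action), $\tau$ is invariant because the mean is, and non-separability causes no difficulty. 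The paper itself notes, after Example \ref{exes}, that such cyclic-homology arguments are available for this case; what your approach buys is a proof for $\Z^2$ that avoids coarse Baum--Connes altogether, at the price of being tied to this specific example.

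The genuine gap is the higher-genus case, which is the part of the statement that actually matters (it is what answers the Elliott--Sierakowski question; $\Z^2$ is amenable, so it says nothing about that). Your claim that it ``runs along the same lines'' with the coordinate derivations replaced by a group $2$-cocycle has no direct implementation on the uniform Roe algebra. The $\Z^2$ construction is anchored by the trace $\tau=\mu\circ E$ coming from an invariant mean on the amenable group, and the standard cocycle one would write down, $\phi_c(f_0u_{g_0},f_1u_{g_1},f_2u_{g_2})=\delta_{g_0g_1g_2,e}\,\mu\bigl(f_0\cdot g_0(f_1)\cdot g_0g_1(f_2)\bigr)c(g_1,g_2)$, needs $\mu$ to be $\Gamma$-invariant for the cyclic cocycle identities to hold. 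For genus at least two the surface group is non-amenable, so no invariant mean exists; indeed $C^*_u|\Gamma|$ is then properly infinite (Theorem \ref{p.i-para}) and carries no tracial state at all. The Connes--Moscovici cocycle attached to the fundamental class lives on a smooth subalgebra of $C^*_r\Gamma$ and uses the canonical trace there; extending it to $\ell^\infty(\Gamma)\rtimes_r\Gamma$ with a holomorphically closed domain is precisely the nontrivial point, and nothing in your $\Z^2$ computation supplies it. Relatedly, the class you propose to detect is produced by Baum--Connes assembly in $K_0(C^*_r\Gamma)$ and then pushed into $K_0(C^*_u|\Gamma|)$; without a detecting functional you have no argument that this push-forward is nonzero, let alone that it lies outside the image of $\delta_*$. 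This is exactly where the paper's proof does its work: uniform contractibility of the hyperbolic plane and coarse Baum--Connes give that $K_0(\ell^\infty(X,\mathcal{K}))\to K_0(C^*(X))$ is zero, while the uniform index class of the Dirac operator maps to a generator of $K_0(C^*(X))\cong\Z$, and the commutative square then forbids surjectivity of $\delta_*$. To complete your proof you would either need a cyclic cocycle on $C^*_u|\Gamma|$ built without an invariant mean, or you should fall back on a coarse-index argument of the paper's type for the genus $\geq 2$ case.
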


In particular, when $X$ is the fundamental group of a surface of genus $2$ this answers a question of Elliott and Sierakowski: they asked in \cite[Question 5.2]{MR3508496} whether $K_0(C^*_u|G|)$ is always zero for non-amenable groups, and the above shows the answer is `no'.  The proof of the above theorem uses higher index theory to construct interesting classes in $K_0(C^*_u(X))$.

\section{Consequences of asymptotic dimension zero}\label{asdim0 sec}

In this section, we show that some standard $C^*$-algebraic properties of $C^*_u(X)$ are equivalent to $X$ having asymptotic dimension zero.  First, here is a definition of asymptotic dimension zero that is particularly well-suited to our purposes; we leave checking that it is equivalent to Definition \ref{asdim} from the introduction as an exercise for the reader.

\begin{defi}\label{ad0def}
Let $X$ be a bounded geometry metric space.  For each $r\geq 0$, let $\sim_r$ be the equivalence relation generated by the relation: $xRy$ if $d(x,y)\leq r$.  The space $X$ has \emph{asymptotic dimension zero} if for every $r\geq 0$, the relation $\sim_r$ has uniformly finite equivalence classes.  In symbols, if $[x]_r$ is the equivalence class of $x\in X$ for $\sim_r$, then $X$ has asymptotic dimension zero if $\sup_{x\in X}|[x]_r|<\infty$.
\end{defi}

For example, it is not too difficult to check that if $X=G$ is a countable group equipped with a left invariant bounded geometry metric, then $X$ has asymptotic dimension zero if and only if $G$ is locally finite: see \cite[Theorem~2]{MR2237596}.

Here is the main theorem of this section.  Precise definitions of all but two of the $C^*$-algebraic concepts involved can be found in \cite{MR2188261}; the exceptions are decomposition rank and nuclear dimension, which can be found in \cite{Kirchberg:2004uq}\footnote{The definition of decomposition rank is only stated for separable $C^*$-algebras in \cite{Kirchberg:2004uq}, but makes good sense, and seems interesting, in the non-separable case too.} and \cite{Winter:2010eb} respectively.

\begin{thm}\label{ad0the}
Let $X$ be a bounded geometry metric space.   The following are equivalent:
\begin{enumerate}
\item $X$ has asymptotic dimension zero;
\item $C^*_u(X)$ is an inductive limit of finite dimensional $C^*$-subalgebras;
\item $C^*_u(X)$ is AF in the local sense;
\item $C^*_u(X)$ is strongly quasidiagonal;
\item $C^*_u(X)$ has decomposition rank zero;
\item $C^*_u(X)$ has finite decomposition rank;
\item $C^*_u(X)$ has nuclear dimension zero;
\item $C^*_u(X)$ has stable rank one;
\item $C^*_u(X)$ has cancellation.
\end{enumerate}
\end{thm}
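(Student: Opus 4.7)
The theorem packages nine equivalences, and I would attack it as a hub-and-spoke argument: prove (1)$\Rightarrow$(2) directly, deduce the other forward implications (3)--(9) from (2) by standard facts about (locally) AF algebras, and close the loop by the three converses (4)$\Rightarrow$(1), (6)$\Rightarrow$(1), (9)$\Rightarrow$(1). The remaining reverse implications reduce to these: (5) and (7) each imply (6), (2) and (3) imply (6), and (8) implies (9).

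For (1)$\Rightarrow$(2), I would work directly with Definition~\ref{ad0def}. When $X$ has asymptotic dimension zero, each equivalence relation $\sim_r$ has uniformly bounded classes of cardinality at most some $N_r$. Any $a \in \C_u[X]$ of propagation $\le r$ satisfies $a_{xy} = 0$ unless $x \sim_r y$, so $a$ is block-diagonal with respect to the partition $\mc E_r := X/\sim_r$, with blocks of size $\le N_r$. The closed subalgebra $C_r \subset C^*_u(X)$ generated by such operators is then contained in $\prod_{E \in \mc E_r} M_{|E|}(\C) \hookrightarrow \ell^\infty(\mc E_r) \otimes M_{N_r}(\C) \cong C(\beta \mc E_r) \otimes M_{N_r}(\C)$. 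Since $\beta \mc E_r$ is totally disconnected, this ambient algebra is the norm-closure of its finite-dimensional subalgebras spanned by matrix units of $M_{N_r}(\C)$ and characteristic functions of finite clopen partitions of $\beta \mc E_r$; collecting the corresponding finite-dimensional approximants inside each $C_r$ and passing to the union over all $r$ yields an inductive system of finite-dimensional subalgebras with union dense in $C^*_u(X)$. This gives (2), whence (3) is immediate; the zero-dimensional inductive structure forces decomposition rank and nuclear dimension to vanish (giving (5), (7), hence (6)), while (8), (9), and (4) follow from classical properties of (locally) AF algebras.

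For the converses, assume $X$ has asdim $> 0$; then for some $r > 0$ the classes of $\sim_r$ are unboundedly large. Using bounded geometry, I would greedily extract a sequence of $r$-chains $(y^{(n)}_1, \ldots, y^{(n)}_{m_n})$ in $X$ with $m_n \to \infty$, pairwise disjoint, and mutually $D_n$-separated with $D_n \to \infty$. The shift partial isometry $V \in C^*_u(X)$ defined by $V \delta_{y^{(n)}_i} = \delta_{y^{(n)}_{i+1}}$ for $i < m_n$ (and zero elsewhere) has propagation $\le r$ and implements the Murray--von Neumann equivalence $V^*V \sim VV^*$. Writing $P_{\text{start}} := \sum_n e_{y^{(n)}_1, y^{(n)}_1}$ and $P_{\text{end}} := \sum_n e_{y^{(n)}_{m_n}, y^{(n)}_{m_n}}$, we get $V^*V + P_{\text{end}} = VV^* + P_{\text{start}}$. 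Cancellation (9) would then force $P_{\text{start}} \sim P_{\text{end}}$ in $C^*_u(X)$, i.e., a partial isometry of bounded propagation whose matrix entries link startpoints to endpoints; but any such partial isometry would require, for large $n$, nonzero entries in the row $y^{(n)}_1$ within a fixed distance of some endpoint, contradicting both $(m_n-1)r \to \infty$ (within a chain) and $D_n \to \infty$ (between distinct chains). Analogous but more delicate obstructions, exploiting the growing matrix subalgebras $M_{m_n}(\C) \hookrightarrow C^*_u(X)$ supplied by the chains together with non-standard representations of $C^*_u(X)$ (e.g.\ on ultrapowers or corona spaces), should rule out strong quasidiagonality (4) and finite decomposition rank (6).

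The main obstacle is verifying $P_{\text{start}} \not\sim P_{\text{end}}$ uniformly across \emph{all} candidate partial isometries, not merely the obvious within-chain pairing. This is where the chain-separation $D_n \to \infty$ is essential: bounded geometry allows only finitely many chains of a given size to inhabit any fixed ball, and the greedy construction has to be orchestrated so that every conceivable matching of startpoints to endpoints has unbounded displacement. The (4) and (6) obstructions look substantially more delicate than (9), because many spaces with asdim $> 0$ (e.g.\ $X = \bigsqcup_n \{0,1,\ldots,n\}$ with chains disjoint and increasingly far apart) still have \emph{stably finite} uniform Roe algebras sitting inside $\prod_n M_{n+1}(\C)$, so the obstructions cannot be extracted from non-finiteness alone; one must engage either with the dimension theory of Kirchberg--Winter or with representation-theoretic aspects of strong quasidiagonality that go beyond the identity representation on $\ell^2(X)$.
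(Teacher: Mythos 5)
Your overall architecture coincides with the paper's (prove (1)$\Rightarrow$(2) directly, quote the standard implications, and close the circle via (4)$\Rightarrow$(1), (6)$\Rightarrow$(1), (9)$\Rightarrow$(1)), but there are genuine gaps. The largest one: you give no argument at all for (4)$\Rightarrow$(1) and (6)$\Rightarrow$(1); ``growing matrix subalgebras plus ultrapowers or coronas should rule out'' is a gesture, not a proof, and as you yourself observe, examples like $X=\bigsqcup_n\{0,1,\dots,n\}$ show that no finiteness-type obstruction visible in the identity representation on $\ell^2(X)$ can work. The missing idea is to pass to an honest quotient of $C^*_u(X)$: from the chains of Lemma \ref{asdim0} one builds, via the limit space machinery of \v{S}pakula--Willett (an ultrafilter supported on the startpoints), a $*$-homomorphism $\Phi_\omega$ from $C^*_u(X)$ onto a uniform Roe algebra of a limit space containing a coarsely embedded copy of $\N$, under which the shift along the chains maps to $1+(\text{unilateral shift})$, a proper isometry. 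Strong quasidiagonality forces every quotient to be quasidiagonal, hence free of proper isometries, and finite decomposition rank passes to quotients and excludes proper isometries by the Kirchberg--Winter argument; that is how both converses are closed, and nothing in your sketch substitutes for this step.

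Two further points. In (9)$\Rightarrow$(1) your within-chain obstruction invokes ``$(m_n-1)r\to\infty$'', but for a general $r$-chain the number of links only gives an \emph{upper} bound on $d(y^{(n)}_1,y^{(n)}_{m_n})$: a chain may double back, and if every chain returns near its startpoint then $P_{\mathrm{start}}\sim P_{\mathrm{end}}$ genuinely holds via a bounded-propagation partial isometry and there is no contradiction. You must build into the extraction that each chain moves steadily away from its startpoint (condition (2) of Lemma \ref{asdim0}, $d(x^{(n)}_1,x^{(n)}_{i+1})\in[ir,(i+1)r)$), which is possible because bounded geometry forces arbitrarily large $\sim_r$-classes to have arbitrarily large diameter, after which one straightens a connecting chain; note also that a partial isometry witnessing $P_{\mathrm{start}}\sim P_{\mathrm{end}}$ in $C^*_u(X)$ need not have finite propagation, so the contradiction must be run against a finite-propagation approximant, as in the paper. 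Finally, in (1)$\Rightarrow$(2) your construction as described only yields that a union of finite-dimensional subalgebras is dense, i.e.\ statement (3); to obtain (2) one needs a \emph{directed} family, and this is precisely where (2) and (3) differ for non-separable algebras (Farah--Katsura). The paper spends most of that proof organizing the approximants $A_{r,\mathcal{P}}$ (block-diagonal over $\sim_r$-classes and constant on the colours of a finite partition) into a directed set, in particular showing that for $s\geq r$ every $A_{r,\mathcal{P}}$ is contained in some $A_{s,\mathcal{Q}}$; your sketch asserts the inductive system but does not verify this compatibility across different scales.
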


This should be compared to the results of Shuyun Wei \cite{Wei:2011kl}, who shows that quasidiagonality, stable finiteness, and finiteness of $C^*_u(X)$ are all equivalent to the relations $\sim_r$ from Definition \ref{ad0def} all having finite\footnote{Not necessarily uniformly finite!} equivalence classes; and also of Eduardo Scarparo \cite{Scarparo:2016kl}, who shows that if $X=G$ is a group, then local finiteness of $G$ is equivalent to finiteness of $C^*_u(X)$ (the proofs of Wei's and Scarparo's results are similar; however, the two were arrived at independently). We also refer to \cite[Corollary~5.4]{Li-Liao} for a summary of equivalent $C^*$-properties of uniform Roe algebras coming from (not necessarily countable) locally finite groups. Moreover, in \cite{Li-Liao} Liao and the first named author also obtained a classification result of uniform Roe algebras of countable locally finite groups.

Proceeding to the proof of Theorem \ref{ad0the}, the implications
\begin{equation}\label{implications}
\xymatrix{ (2) \ar@{=>}[r] & (3) \ar@{=>}[d] \ar@{<=>}[r] & (5) \ar@{=>}[d] \ar@{<=>}[r] & (7) \ar@{=>}[r] & (8) \ar@{=>}[r] & (9) \\  & (4) & (6)  & && & &}
\end{equation}
are well-known to hold for general (unital) $C^*$-algebras.  Indeed: $(2)\Rightarrow(3)$ is trivial; $(3)\Rightarrow (4)$ is straightforward; see \cite[Remarks~2.2 (iii)]{Winter:2010eb} for $(3)\Leftrightarrow(5)\Leftrightarrow (7) $; $(5)\Rightarrow(6)$ is trivial; $(3)\Rightarrow (8)$ is straightforward; and see for example \cite[Proposition~V.3.1.24]{MR2188261} for $(8)\Rightarrow(9)$.   However, none of the one-directional arrows in line \eqref{implications} above are reversible in general.  The implication $(6)\Rightarrow (8)$ is known to hold in some cases, for example for simple, separable, unital $C^*$-algebras by combining the main result of \cite{Winter:2010qj}, \cite[Proposition~5.1]{Kirchberg:2004uq} (to get finiteness), and \cite[Theorem~6.7]{Rordam:2004lw}; it is, however, not even true that $(6)\Rightarrow (9)$ in general, as one can see from the commutative case using \cite[Proposition~3.3]{Kirchberg:2004uq}.  The implication $(6)\Rightarrow (4)$ is known in the separable case \cite[Proposition~5.3]{Kirchberg:2004uq}, but it is not clear to us whether it holds for non-separable $C^*$-algebras in general.  The most subtle point is perhaps that (2) and (3) are equivalent for separable $C^*$-algebras by a result of Bratelli, but not in general as shown by Farah and Katsura in \cite{Farah:2010db}. If we assume the continuum hypothesis then $(3)\Leftrightarrow(2)$ would follow in our case from the fact that $X$ is countable (as it has bounded geometry) and \cite[Theorem~1.5]{Farah:2010db}; the result of Theorem \ref{ad0the} is independent of any assumptions on the continuum hypothesis, however.

To prove Theorem \ref{ad0the} it suffices therefore to show (1) implies (2), (4) implies (1), (6) implies (1), and (9) implies (1).  

\begin{proof}[Proof of Theorem \ref{ad0the}, (1) implies (2)]
Assume that $X$ has asymptotic dimension zero, so for each $r>0$, the equivalence classes $I_r$ for the relation $\sim_r$ from Definition \ref{ad0def} are uniformly bounded (and in particular, uniformly finite by bounded geometry).  Fix a total order on $X$, and for each finite subset $A$ of $X$, let $f_A:A\to \{1,...,|A|\}$ be the order-isomorphism determined by the total order.  

Consider now the collection $\mathcal{I}$ of ordered pairs $(r,\mathcal{P})$, where $r>0$, and $\mathcal{P}=\{P_1,...,P_N\}$ is a partition of $I_r$ into finitely many non-empty sets (which we think of as `colours') such that if $A,B\in I_r$ have the same colour, then $|A|=|B|$.  Fix $(r,\mathcal{P})$ in $\mathcal{I}$ for now.  Let $n_1,...,n_N$ denote the cardinalities of the sets in each of the colours $P_1,...,P_N$ of $\mathcal{P}$, and let $B=\bigoplus_{i=1}^N M_{n_i}(\C)$.   For each $A\in I_r$ with colour $P_i$, let $u_{A,i}:\ell^2(\{1,...,|A|\})\to \ell^2(A)$ be the unitary determined by the order-isomorphism $f_A$.  Define 
$$
\phi:B\to \prod_{A\in I_r}\mathcal{B}(\ell^2(A))\subseteq \mathcal{B}(\ell^2(X)), \quad (a_i)_{i=1}^N \mapsto \prod_{i=1}^N\prod_{A\in P_i} u_{A,i}a_iu_{A,i}^*.
$$
Note that as the sets $A\in I_r$ are uniformly bounded, the image of $\phi$ is contained in $C^*_u(X)$.  We denote by $A_{r,\mathcal{P}}$ the corresponding finite dimensional sub-$C^*$-algebra of $C^*_u(X)$.  Informally, $A_{r,\mathcal{P}}$ consists of operators that are block diagonal with respect to $I_r$, and that are constant on each colour in $\mathcal{P}$.

We now define a partial order on the set $\mathcal{I}$ by $(r,\mathcal{P})\leq (s,\mathcal{Q})$ if $r\leq s$ and $A_{r,\mathcal{P}}\subseteq A_{s,\mathcal{Q}}$.   This is indeed a partial order: it is clearly reflexive and transitive, while antisymmetry follows from the fact that $(r,\mathcal{P})\leq (r,\mathcal{Q})$ if and only if $\mathcal{Q}$ refines $\mathcal{P}$.
We claim that this partial order makes $\mathcal{I}$ into a directed set.  Using the fact just noticed about refinements, it suffices to show that for any $(r,\mathcal{P})\in \mathcal{I}$ and any $s\geq r$, there exists a partition $\mathcal{Q}$ of $I_s$ such that $A_{r,\mathcal{P}}\subseteq A_{s,\mathcal{Q}}$: indeed, given this, we have that if $A_{r_1,\mathcal{P}_1}$ and $A_{r_2,\mathcal{P}_2}$ are arbitrary then for $s=\max\{r_1,r_2\}$ there are partitions $\mathcal{Q}_1$, $\mathcal{Q}_2$ of $I_s$ such that $A_{r_1,\mathcal{P}_1}\subseteq A_{s,\mathcal{Q}_1}$ and $A_{r_2,\mathcal{P}_2}\subseteq A_{s,\mathcal{Q}_2}$; then both $(r_1,\mathcal{P}_1)$ and $(r_2,\mathcal{P}_2)$ are dominated by $(s,\mathcal{Q})$ for our partial order, where $\mathcal{Q}$ is the coarsest common refinement of $\mathcal{Q}_1$ and $\mathcal{Q}_2$.  

To establish the claimed fact, then, say we are given $(r,\mathcal{P})\in \mathcal{I}$ and $s\geq r$.  Write $\mathcal{P}=\{P_1,...,P_m\}$. and let $f_A:A\to \{1,...,|A|\}$ be the bijection above.  Define a colouring $\mathcal{P}_A=\{P_{1,A},...,P_{n,A}\}$ on $\{1,...,|A|\}$ by stating that $n$ is in $P_{i,A}$ if and only if it is in the image of $f_A(C)$ for some $C\subseteq A$ with $C\in P_{i}$ (some sets $P_{i,A}$ could be empty).   To define $\mathcal{Q}$, it suffices to say when $A,B\in I_s$ have the same colour with respect to $\mathcal{Q}$.   Say then that $A,B\in I_s$ have the same colour with respect to $\mathcal{Q}$ if and only if $|A|=|B|$, and if for each $i\in \{1,...,m\}$ we have equality $P_{i,A}=P_{i,B}$ as subsets of $\{1,...,|A|\}=\{1,...,|B|\}$; as there are only finitely many cardinalities of sets in $I_s$, and as there are only finitely many possible partitions of the corresponding sets $\{1,...,n\}$ into at most $m$ colours, we see that $\mathcal{Q}$ is a finite partition.  It is not too difficult to check that $(r,\mathcal{P})\leq (s,\mathcal{Q})$ as required.

To complete the proof, it will suffice to show that the union $\bigcup_{(r,\mathcal{P})\in \mathcal{I}}A_{r,\mathcal{P}}$ is dense in $C^*_u(X)$, and for this it suffices to show that any finite propagation operator can be approximated by an element of this union.  For this, let $\epsilon>0$, and let $a\in C^*_u(X)$ have propagation at most $r$.  Then $a$ is contained in the $C^*$-algebra $\prod_{A\in I_r}\mathcal{B}(\ell^2(A))$, which we identify with $\prod_{A\in I_r}M_{|A|}(\C)$ using the bijections $f_A$.   Write $a_A$ for the component of $a$ in the relevant copy of $M_{|A|}(\C)$.  Set $N=\max\{|A|\mid A\in I_r\}$, and for each $n\in \{1,...,N\}$, choose an $(\epsilon/2)$-dense subset $\{b_{n,1},...,b_{n,M_n}\}$ of the ball of radius $\|a\|$ in $M_n(\C)$; note that for each $A$ there exists $m(A)\in \{1,...,M_n\}$ such that $\|a_A-b_{|A|,m(A)}\|<\epsilon/2$.  For each $n\in \{1,...,N\}$ and $m\in \{1,...,M_n\}$, set 
$$
P_{n,m}=\{A\in I_r\mid |A|=n \text{ and } m(A)=m\},
$$
and define 
$$
\mathcal{P}=\{P_{n,m}\mid n\in \{1,...,N\},~m\in \{1,...,M_n\}, ~P_{n,m}\neq\varnothing\}.
$$
Then we have that the element with entries $b:=b_{|A|,m(A)}$ as $A$ ranges over $I_r$ is in $A_{r,\mathcal{P}}$ and 
$$
\|a-b\|=\sup_{A\in I_r}\|a_A-b_{|A|,m(A)}\|<\epsilon;
$$
we are done.
\end{proof}

Before we start on the proofs that (4), (6), and (9) imply (1), let us describe how the basic idea works under a stronger hypothesis that is maybe a bit more intuitive; this also allows us to introduce some terminology that we will need later.   Assume then that $X$ contains a coarsely embedded copy of $\N$ in the sense of the following definition.
\begin{defi}\label{cemb}
Let $X,Y$ be metric spaces.  A map $f:X\to Y$ is a \emph{coarse embedding} if there are non-decreasing proper functions $\rho_-,\rho_+:[0,\infty)\to[0,\infty)$ such that
$$
\rho_-(d_X(x_1,x_2))\leq d_Y(f(x_1),f(x_2))\leq \rho_+(d_X(x_1,x_2))
$$
for all $x_1,x_2\in X$.  The map $f$ is a \emph{coarse equivalence} if it is a coarse embedding, and if there is a constant $c>0$ such that each $y\in Y$ is within $c$ of some point in the image of $f$.
\end{defi}
\noindent The existence of a coarse embedding of $\N$ into $X$ forces $X$ to have positive asymptotic dimension; moreover such a coarse embedding does exist, for example, whenever $X$ is a countable discrete group of positive asymptotic dimension, or a net in a complete connected, open Riemannian manifold.  Now, in this case, $C^*_u(X)$ will contain a proper isometry: roughly, take the operator that acts as the unilateral shift along the coarsely embedded copy of $\N$, and as the identity everywhere else.  This proves all the remaining implications in this case.  Similar arguments to this also underlie the results of Scarparo \cite{Scarparo:2016kl} and Wei \cite{Wei:2011kl} mentioned above.  

Unfortunately, a space with positive asymptotic dimension need not contain a coarsely embedded copy of $\N$ in general.  The following lemma, which says roughly that $X$ must at least contain longer and longer `coarse line segments', provides a useful substitute. 

\begin{lem}\label{asdim0}
Say $X$ is a bounded geometry metric space which does not have asymptotic dimension zero.  Then there is a number $r>0$ and for each $n\geq 1$ a subset $S_n=\{x_1^{(n)},...,x^{(n)}_{m_n}\}$ of $X$ with the following properties:
\begin{enumerate}
\item $m_n\to\infty$ as $n\to\infty$;
\item for each $n$, and each $i\in \{1,...,m_n-1\}$, $d(x^{(n)}_i,x_{i+1}^{(n)})\leq 2r$, and $d(x^{(n)}_1,x^{(n)}_{i+1})\in [ir,(i+1)r)$;
\item the sequence $(\inf_{m\neq n}d(S_n,S_m))_{n=1}^\infty$ is strictly positive, and tends to infinity as $n$ tends to infinity.
\end{enumerate}
\end{lem}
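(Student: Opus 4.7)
The plan is to exploit the failure of asymptotic dimension zero, together with bounded geometry, to extract longer and longer ``coarse line segments'' from $X$, and then to space them out.

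First I would use Definition~\ref{ad0def}: the hypothesis provides some $r>0$ for which the $\sim_r$-equivalence classes are not uniformly finite, and this will be the $r$ in the lemma. Bounded geometry upgrades this to unbounded diameter, since any class contained in a ball of radius $R$ has cardinality at most the bounded-geometry constant $K(R)$. So we can choose a sequence of $\sim_r$-classes $C_n$ together with points $x_1^{(n)}, y^{(n)} \in C_n$ such that $D_n := d(x_1^{(n)}, y^{(n)}) \to \infty$.

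Within each $C_n$ I would fix a $\sim_r$-chain $z_0 = x_1^{(n)}, z_1, \dots, z_{M_n} = y^{(n)}$ of step at most $r$ and extract $S_n$ by greedy first-entry: let $i_k^{(n)} := \min\{j : d(x_1^{(n)}, z_j) \ge kr\}$ and put $x_{k+1}^{(n)} := z_{i_k^{(n)}}$. Because the chain has step at most $r$ and $d(x_1^{(n)}, z_{i_k-1}) < kr$, the triangle inequality forces $d(x_1^{(n)}, x_{k+1}^{(n)}) \in [kr, (k+1)r)$, which is the second half of (2); the count $m_n \ge \lfloor D_n/r \rfloor$ then gives (1). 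The hard part will be the consecutive bound $d(x_k^{(n)}, x_{k+1}^{(n)}) \le 2r$: on a generic $\sim_r$-chain, greedy picks in successive shells can drift metrically far apart when the chain wanders inside a shell before crossing. To control this I would take the chain to be a shortest chain in the $\sim_r$-graph on $C_n$, and use bounded geometry to bound the size of the ``wandering region'' between two consecutive picks; a metric gap greater than $2r$ between them would allow a $\sim_r$-shortcut contradicting graph-geodesy of the chain. One may absorb slack in this step by replacing $r$ with a suitable constant multiple.

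Finally, for condition (3) I would inductively thin out the sequence of classes. Having built $S_1, \dots, S_{n-1}$ --- a finite subset of $X$ --- choose the next class $C_n$ and basepoint $x_1^{(n)}$ at distance at least $n$ from $\bigcup_{m < n} S_m$. This is possible because distinct $\sim_r$-classes are mutually at distance $> r$, and bounded geometry together with the still-infinite supply of $\sim_r$-classes of large diameter allows us to find suitable $C_n$ arbitrarily far from any prescribed finite set. After relabelling, $\inf_{m \ne n} d(S_n, S_m) \to \infty$, giving (3).
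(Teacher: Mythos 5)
Your overall skeleton is the same as the paper's: take the $r$ furnished by negating Definition~\ref{ad0def}, use bounded geometry to turn ``not uniformly finite'' into ``unbounded diameter'', extract annulus representatives from an $r$-chain by first entry, and thin out recursively to separate the sets. The first-entry computation for $d(x_1^{(n)},x_{k+1}^{(n)})\in[kr,(k+1)r)$ and for (1) is fine. The genuine gap is exactly the step you flag, and your proposed repair does not close it. Passing to a shortest chain in the $\sim_r$-graph does not force consecutive picks to be $2r$-close: a gap $d(x_k^{(n)},x_{k+1}^{(n)})>2r$ does not produce two chain points with index difference at least two at distance at most $r$, so there is no ``shortcut'' contradicting geodesy; between the first entry into the $(k-1)$-st annulus and the first entry into the $k$-th, the chain merely stays inside the ball $B(x_1^{(n)},(k+1)r)$, where a geodesic may lawfully travel a long way sideways. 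What geodesy plus bounded geometry actually give is that the chain points in that ball are pairwise distinct (and $r$-separated apart from consecutive pairs), hence at most $K((k+1)r)$ in number, so the best bound this yields is $d(x_k^{(n)},x_{k+1}^{(n)})\le K((k+1)r)\,r$ --- a bound growing with $k$, which no fixed constant multiple of $r$ can absorb. A concrete obstruction: a space whose points lie along a discrete spiral with step $r$ and with successive turns separated by slightly more than $r$ is $\sim_r$-connected and its unique chain is a graph geodesic, yet the distance to the starting point grows by an ever smaller fraction of $r$ per step, and consecutive first-entry picks into the annuli around the start drift apart linearly in $k$; from that basepoint no choice along the chain satisfies (2), so one must be prepared to move the basepoint (or change the scale), not merely re-extract from the same chain. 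The paper's own proof passes over this interpolation step very quickly (it asserts the sequence from $x$ to $y$ with both properties directly), so this is precisely where the real work of the lemma lies, and your argument as written does not supply it.

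A secondary issue concerns (3). Choosing only the basepoint $x_1^{(n)}$ at distance at least $n$ from $\bigcup_{m<n}S_m$ does not give $d(S_n,S_m)$ large: the later points $x_k^{(n)}$ are extracted from a chain that may wander back towards the earlier sets. Moreover, your appeal to a ``still-infinite supply of $\sim_r$-classes of large diameter'' far from any finite set is not justified when there is a single infinite class and all others are small. The paper's device handles both points at once: it restricts $\sim_r$ to $Y:=\{x\in X\mid d(x,\bigcup_{m\le N}S_m)>N\}$ and observes, using bounded geometry (only a finite set is removed, and every new equivalence class of the restricted relation must meet the $r$-neighbourhood of that finite set, so only finitely many new classes appear), that the restricted relation still has classes of arbitrarily large cardinality; the entire chain, and hence all of $S_{N+1}$, then lies in $Y$, which is what gives (3). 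You should incorporate this restriction step, and then concentrate your effort on a correct proof of the interpolation bound in (2).
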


\begin{proof}
Negating Definition \ref{ad0def} gives the existence of some $r>0$ such that the equivalence relation $\sim_r$ as defined there has equivalence classes of either infinite, or finite and arbitrarily large, cardinalities.  Choose $(S_n)$ recursively as follows.  Fix any point $x_1^{(1)}$ in $X$, and set $S_1=\{x_1^{(1)}\}$.  If $S_1,...,S_N$ have been chosen, consider 
$$
Y:=\Bigg\{x\in X~\Big|~ d\Big(x,\bigcup_{n=1}^N S_n\Big)>N\Bigg\}.
$$
As the set removed from $X$ to define $Y$ is finite, it is still true that the equivalence relation $\sim_r$ on $Y$ (now defined with respect to the induced metric on $Y$) contains equivalence classes with either infinite, or finite and arbitrarily large, cardinalities.   In particular, as $Y$ has bounded geometry there exist points $x,y\in Y$ with $x\sim_r y$, and such that 
$$
d(x,y)>\max\{d(x_1^{(n)},x_{m_n}^{(n)})\mid n\in \{1,...,N\}\}.
$$
As $x\sim_r y$ there is a finite sequence $x=x_1^{(N+1)},x_2^{(N+1)},...,x_{m_{N+1}}^{(N+1)}=y$ with $d(x_i^{(N+1)},x_{i+1}^{(N+1)})\leq 2r$, and such that $d(x^{(n)}_1,x^{(n)}_{i+1})\in [ir,(i+1)r)$ for all $i\in \{1,...,m_{N+1}-1\}$.  This gives us our desired set $S_{N+1}$.
\end{proof}

We are now ready to discuss the proofs of the implications (4) implies (1), and (6) implies (1).  

\begin{proof}[Proof of Theorem \ref{ad0the}, (4) implies (1) and (6) implies (1)]
First note that \cite[line~(3.3)]{Kirchberg:2004uq} implies that finite decomposition rank passes to quotients, and the argument of \cite[Proposition~5.1]{Kirchberg:2004uq} shows that a $C^*$-algebra with finite decomposition rank cannot contain a proper isometry (both of these arguments are only stated for the separable case, but the proofs work in general).  On the other hand, strong quasidiagonality for a $C^*$-algebra implies that all of its quotients are quasidiagonal, and so in particular cannot contain a proper isometry.  

Thus to prove both of these implications it will suffice to exhibit a quotient of $C^*_u(X)$ that contains a proper isometry whenever $X$ has positive asymptotic dimension.  We will need some machinery from \cite[Sections 3 and 4]{Spakula:2014aa}; we refer the reader to that paper for notation and terminology.  For compatibility with that paper, note that we may without loss of generality assume that the metric on $X$ is integer-valued by replacing it with $\lceil d\rceil$ (where $\lceil\cdot \rceil$ is the ceiling function); indeed, this does not affect $C^*_u(X)$, or the asymptotic dimension of $X$.

Assume that $X$ does not have asymptotic dimension zero.  Hence there exists $r>0$ and a sequence $(S_n)$ of subsets of $X$ with the properties in Lemma \ref{asdim0}.  Let $Y=\{x_1^{(n)}\mid n\in \N\}$.  This is an infinite subset of $X$, and thus we may fix a non-principal ultrafilter $\omega$ on $X$ such that $\omega(Y)=1$.  For each $k\geq 2$, let $Y_k=\{x_1^{(n)}\mid m_n\geq k\}$; note that this is a cofinite subset of $Y$, and thus $\omega(Y_k)=1$ for all $k$.  Define a partial translation (in the sense of \cite[Definition~3.1]{Spakula:2014aa}) $t_k:Y_k\to X$ by $t_k(x_1^{(n)})=x_k^{(n)}$.  These partial translations are compatible with $\omega$ in the sense of \cite[Definition~3.2]{Spakula:2014aa}, so we may define $\omega_k=t_k(\omega)$.  Note that if $d_\omega$ is as in \cite[Proposition~3.7]{Spakula:2014aa}, then $d_\omega(\omega,\omega_k)\in \N\cap [kr,(k+1)r)$ for each $k$, and in particular, the subset $S=\{\omega_k\mid k\in \N\}$ of the limit space $X(\omega)$ of \cite[Definition~3.10]{Spakula:2014aa} is a coarsely embedded copy of $\N$.

Now, define an operator $v$ on $\ell^2(X)$ by 
$$
v:\delta_x=\left\{\begin{array}{ll} \delta_{x_{k+1}^{(n)}}, & x=x_k^{(n)} \text{ for some }n \text{ and } k<m_n \\ 0, & x=x_{m_n}^{(n)}\text{ for some n} \\ \delta_x, & \text{otherwise} \end{array}\right.~;
$$
in words $v$ shifts right by `one unit' along each $S_n$, and acts as the identity elsewhere.  Clearly then $v$ is in $C^*_u(X)$.  Let $\Phi_\omega:C^*_u(X)\to C^*_u(X(\omega))$ be the $*$-homomorphism of \cite[Theorem~4.10]{Spakula:2014aa} (see also \cite[Remark~4.11]{Spakula:2014aa}).  Computing using \cite[Definition~4.1]{Spakula:2014aa}, one sees that $\Phi_\omega(v)$ is the operator $1_{X(\omega)\setminus S}+u$, where $u$ is the unilateral shift along the copy $S$ of $\N$ inside $X(\omega)$, and $1_{X(\omega)\setminus S}$ is the characteristic function of $X(\omega)\setminus S$.  In particular, the image $\Phi_\omega(C^*_u(X))$ contains the proper isometry $\Phi_\omega(v)$ and we are done.
\end{proof}

\begin{proof}[Proof of Theorem \ref{ad0the}, (9) implies (1)]
We will prove the contrapositive, so assume $X$ does not have asymptotic dimension zero.  Hence there exists $r>0$ and a sequence $(S_n)$ of subsets of $X$ with the properties in Lemma \ref{asdim0}.  With notation as in that lemma, define
$$
A:=\bigcup_{n=1}^\infty \{x_1^{(n)},....,x_{m_n-1}^{(n)}\}, \quad  B:=\bigcup_{n=1}^\infty \{x_2^{(n)},....,x_{m_n}^{(n)}\}, \quad \quad \text{and}\quad C:=\bigcup_{n=1}^\infty \{x_1^{(n)},....,x_{m_n}^{(n)}\}
$$
Let $p$ be the characteristic function of $A\cup(X\setminus C)$ and $q$ be the characteristic function of $B\cup (X\setminus C)$.  Then $p$ and $q$ are Murray-von-Neumann equivalent: indeed, the partial isometry defined by 
$$
v:\delta_x \mapsto \left\{\begin{array}{ll} \delta_x & x\in X\setminus C \\ \delta_{x^{(n)}_{i+1}} & x=x^{(n)}_i \text{ for some $n$ and $i\in \{1,...,m_n-1\}$} \\ 0 & x=x^{(n)}_{m_n} \text{ for some }n\end{array}\right.
$$
is in $C^*_u(X)$ by the condition in part (1) of Lemma \ref{asdim0}, and implements such an equivalence.  Assume for contradiction that $C^*_u(X)$ has cancellation.  It follows that $1-p$ and $1-q$ are also Murray-von-Neumann equivalent, say implemented by some partial isometry $w\in C^*_u(X)$ with $ww^*=1-p$ and $w^*w=1-q$.  Note that $1-q$ and $1-p$ are respectively the characteristic functions of the (infinite) subsets 
$$
D=\{x_1^{(n)}\mid n\in \N\}\quad  \text{and}\quad E=\{x_{m_n}^{(n)}\mid n\in \N\}
$$
of $X$.   Now, as $w\in C^*_u(X)$ there exists $s>0$ and $a\in C^*_u(X)$ with propagation $s>0$ and $\|w-a\|<1/2$.  Let $x_1^{(n)}$ be such that $d(x_1^{(n)},E)>s$ (which exists by condition (2) in Lemma \ref{asdim0}).  Let $e\in C^*_u(X)$ be the rank one projection corresponding to the characteristic function of $x_1^{(n)}$, and note that $(1-p)we$ has norm one, but $(1-p)ae=0$.  This contradicts that $\|w-a\|<1/2$, completing the proof.
\end{proof}

\begin{rem}\label{srrem}
Theorem \ref{ad0the} gives a characterization of when $C^*_u(X)$  has stable rank one in terms of the geometry of $X$.  It is natural to ask what other stable ranks can occur for uniform Roe algebras, and to geometrically characterize this.  We cannot say much here, but classical results of Rieffel at least let us give some more examples.  

First note that if $X$ is a non-amenable bounded geometry space, then $C^*_u(X)$ is properly infinite (by Theorem \ref{p.i-para}, for example), and thus has infinite stable rank by \cite[Proposition 6.5]{Rieffel:1983aa}.  On the other hand, the stable rank of $C^*_u|\Z^n|$ equals $2$ for all $n$.  To see this, note first that iterating \cite[Theorem 7.1]{Rieffel:1983aa} on crossed products by $\Z$ proves that $C^*_u|\Z^n|$ has finite stable rank.  On the other hand, considering the corner of $C^*_u|\Z^n|$ corresponding to the characteristic function of the subspace $\{(x_1,...,x_n)\in \Z^n\mid x_1\in k\Z\}$ of $\Z^n$, it is not difficult to see that $C^*_u|\Z^n|\cong M_k(C^*_u|\Z^n|)$ for any $k\geq 1$ and $n\geq 1$; applying \cite[Theorem 6.1]{Rieffel:1983aa} on the behaviour of stable rank under taking matrix algebras forces the stable rank of $C^*_u|\Z^n|$ to be at most two; on the other hand, Theorem \ref{ad0the} implies that it is at least two.

We do not know any examples where the stable rank of $C^*_u(X)$ takes values other than $1$, $2$, or $\infty$: it seems plausible that one can always use some relation of the `matrix isomorphism' above to show that these are the only possible values, but we did not seriously try to pursue this.
\end{rem}

\begin{rem}\label{nucdim rem}
In \cite[Section 8]{Winter:2010eb}, Winter and Zacharias prove that the nuclear dimension of $C^*_u(X)$ is always no more than the asymptotic dimension of $X$.  Theorem \ref{ad0the} above implies in particular that one actually has equality for spaces $X$ of asymptotic dimension zero or one.  Whether or not equality holds more generally than this, or even whether one can get non-trivial lower bounds on the nuclear dimension of $C^*_u(X)$ in some special cases, seems a very interesting question.  As far as we know, this is currently completely open.  

Note also that finite decomposition rank and finite nuclear dimension are quite different properties for uniform Roe algebras by these results and Theorem \ref{ad0the}.   In particular, combining the main result of \cite[Section 8]{Winter:2010eb}, \cite[Theorem 2.3]{Wei:2011kl}, and (for example) the main result of \cite{Delabie:2017vf} gives many examples of quasidiagonal uniform Roe algebras with infinite decomposition rank and finite nuclear dimension (these examples are not, however, strongly quasidiagonal). 
\end{rem}

\section{Real rank zero}\label{rr0 sec}

In this section, we address the question of when $C^*_u(X)$ has real rank zero; see \cite{Brown:1991gf} for definitions and basic results around real rank zero.  Unlike in the previous section, we have no definitive answers here, but the following result, and the corollaries we obtain below, seem to suggest that real rank zero for uniform Roe algebras is quite rare.

\begin{thm}\label{rr0z2}
The uniform Roe algebra of $\Z^2$ does not have real rank zero.  
\end{thm}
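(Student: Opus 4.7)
The plan is to proceed by contradiction from the standard consequence of real rank zero that projections lift along quotient maps: if $A$ has real rank zero, then for every closed two-sided ideal $I\subseteq A$ every projection in $A/I$ is the image of a projection in $A$, so the induced map $K_0(A)\to K_0(A/I)$ must contain the $K_0$-class of every projection in $A/I$ in its image. By the six-term exact sequence associated to $0\to I\to A\to A/I\to 0$, this is the statement that the exponential boundary map $\partial\colon K_0(A/I)\to K_1(I)$ vanishes on every $K_0$-class coming from a projection in $A/I$. To rule out real rank zero for $A:=C^*_u(\Z^2)$ it therefore suffices to produce a closed two-sided ideal $I\subseteq A$ together with a projection $p\in A/I$ satisfying $\partial[p]\neq 0$ in $K_1(I)$.

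For the ideal I would exploit the $2$-dimensional structure of $\Z^2$ by fixing a half-plane $Y=\Z_{\geq 0}\times\Z\subset\Z^2$ and taking $I$ to be the closed two-sided ideal of $A$ generated by operators whose matrix entries are supported in $Y\times Y$; informally $I$ is the part of $A$ adapted to the half-plane $Y$, essentially the uniform Roe algebra of $Y$ sitting inside $A$, so that $K_1(I)$ carries Toeplitz-type classes detectable by a winding number along the boundary line $\{0\}\times\Z$. The projection $p$ is then built from the two translation operators $u_1,u_2\in A$ by a Bott-like recipe: one chooses a self-adjoint $h\in A$ formed from the $u_i$ (for instance a Clifford-type combination of $u_1+u_1^*$ and $i(u_2-u_2^*)$, or a quasi-idempotent built from the $u_i$) such that the image $\pi(h)\in A/I$ acquires a spectral gap at $0$ that $h$ itself lacks, and one defines $p$ to be the spectral projection of $\pi(h)$ for the positive half-line.

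The main obstacle is the $K$-theoretic verification that $\partial[p]$ is genuinely nonzero, and this is where the two-dimensionality of $\Z^2$ enters essentially. I would expect to isolate a Toeplitz-like sub-extension of $0\to I\to A\to A/I\to 0$ living along the boundary $\{0\}\times\Z$ in which $\partial[p]$ can be computed by a classical one-variable winding number / Toeplitz index argument, and then transport non-triviality back into the full six-term sequence by naturality of $\partial$. The technical heart of the argument will be in choosing $h$ concretely enough that the spectral gap of $\pi(h)$ and the non-vanishing of the boundary Toeplitz index can be controlled simultaneously; producing such an $h$ explicitly, and ruling out that $[p]$ might be cancelled on the level of $K_0(A/I)$ by some other upstairs contribution, is where I expect the real work to lie.
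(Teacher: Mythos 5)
Your opening reduction is exactly the paper's first move: by Brown--Pedersen, real rank zero would force projections in (matrices over) any quotient $C^*_u|\Z^2|/I$ to lift, so it suffices to exhibit a non-liftable projection, and $K$-theory of the extension is the natural detection tool. The gap is in the construction of the projection itself. Any self-adjoint $h$ built only from the shifts $u_1,u_2$ lies in matrices over the translation-invariant subalgebra $C^*_r(\Z^2)\subseteq C^*_u|\Z^2|$, and for your ideal $I$ generated by the half-plane $Y=\Z_{\geq 0}\times\Z$ the quotient still ``sees'' balls of arbitrarily large radius arbitrarily far from $Y$ (as it must for any proper ideal generated by a characteristic function: properness forces points at arbitrarily large distance from the generating set). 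Since $h$ commutes with translations, finitely supported Weyl vectors witnessing any point of $\mathrm{spec}(h)$ can be translated into those far-away balls, where they are killed up to $\epsilon$ by every element of $I$ (elements of $I$ are approximated by finite-propagation operators supported near $Y$). Hence $\mathrm{spec}(\pi(h))=\mathrm{spec}(h)$: no spectral gap opens at $0$ in the quotient, and your recipe produces no projection $p$ at all. To manufacture a projection in a quotient one must break translation invariance; the paper does this by taking $I$ to be the ideal generated by the characteristic function of the $x$-axis, noting that $1_U$ ($U$ the upper half-plane) becomes \emph{central} modulo $I$, and multiplying it against the Bott projection $b\in M_2(C(\mathbb{T}^2))\subseteq M_2(C^*_u|\Z^2|)$, yielding a projection $p$ (and its mirror image $q$) in $M_2(C^*_u|\Z^2|/I)$ (Lemma \ref{no lift}).

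The second gap is the one you flag yourself: the nonvanishing of $\partial[p]$ is the whole content, and no route to it is given. For your ideal, $K_1(I)$ is essentially the $K_1$ of the uniform Roe algebra of a half-plane, a large group with no ready-made winding-number homomorphism; and you cannot simply transport a classical Toeplitz index argument through the non-uniform Roe algebra, because there the ideal of a half-plane has vanishing $K$-theory (half-spaces are flasque), so any boundary class dies. The paper sidesteps boundary maps entirely: it passes to the non-uniform Roe algebra, where Pimsner--Voiculescu computations (Lemma \ref{bott lem}) give $K_0(C^*|\Z^2|)\cong\Z$ generated by the Bott class, $K_0(J)=0$ and $K_1(J)\cong\Z$ for the axis ideal $J$; assuming both $p$ and $q$ lift, injectivity of $\pi_*$ plus the reflection $(x,y)\mapsto(x,-y)$ --- which preserves the ideal, swaps $p$ and $q$ modulo $I$, and sends $[b]$ to $-[b]$ --- yields the impossible equation $n=1-n$ in $\Z$. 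So even after repairing the projection construction, you would still need either an honest index computation at the uniform level or a symmetry argument of this kind to finish.
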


We will prove this through some $K$-theoretic computations.  The main idea -- the second author thanks Chris Phillips for suggesting that we exploit this -- is to show that there is an ideal $I$ in $C^*_u|\Z^2|$ and projections in $2$-by-$2$ matrices over the quotient that do not lift to projections in $M_2(C^*_u|\Z^2|)$.  The results of \cite[Theorem 3.14 and Theorem 2.10]{Brown:1991gf} then imply that the real rank of $C^*_u|\Z^2|$ is not zero.  The ideal $I$ we will use is the one generated by the characteristic function of the `$x$-axis' $X:=\{(x,y)\in \Z^2\mid y=0\}$.  Most of the rest of this section will be spent proving the following lemma.

\begin{lem}\label{no lift}
With notation as above, there are projections in $M_2(C^*_u|\Z^2|/I)$ that do not lift to projections in $M_2(C^*_u|\Z^2|)$.
\end{lem}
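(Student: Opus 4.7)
The plan is to use the six-term exact sequence in $K$-theory for the extension
$$
0 \to I \to C_u^*|\Z^2| \to C_u^*|\Z^2|/I \to 0.
$$
By a standard principle (see \cite{Brown:1991gf}), a projection $p \in M_n(C_u^*|\Z^2|/I)$ lifts to a projection in $M_n(C_u^*|\Z^2|)$ (possibly after stabilizing by zero blocks) if and only if its class under the exponential map $\partial\colon K_0(C_u^*|\Z^2|/I) \to K_1(I)$ vanishes.  Since real rank is $M_2$-stable (\cite[Theorem~3.14]{Brown:1991gf}), producing a non-lifting projection in \emph{any} matrix size will suffice to extract the $M_2$ statement.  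Thus the lemma reduces to showing that $\partial$ is nonzero, or equivalently (by exactness) that the inclusion-induced map $K_1(I) \to K_1(C_u^*|\Z^2|)$ has nontrivial kernel.

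The first step is to identify $K_1(I)$.  The compression $p_X C_u^*|\Z^2| p_X$ is canonically isomorphic to $C_u^*|X|$ for $X \cong \Z$ the $x$-axis with its induced metric, and $p_X$ is full in $I$ (by construction, since $I$ is the ideal generated by $p_X$).  Brown's theorem then gives a Morita equivalence $I \sim_M C_u^*|\Z|$, hence
$$
K_1(I) \cong K_1(C_u^*|\Z|) \cong \Z,
$$
where the last isomorphism follows from the Pimsner--Voiculescu sequence applied to $C_u^*|\Z| \cong \ell^\infty(\Z) \rtimes \Z$ (using $K_1(\ell^\infty(\Z))=0$ together with the fact that the shift-fixed subgroup of $K_0(\ell^\infty(\Z))$ is generated by $[1]$).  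A generator is represented by the bilateral shift $V_X$ along $X$.

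Next I would show that this generator, represented upstairs by the unitary $u := V_X + (1-p_X) \in C_u^*|\Z^2|$, becomes trivial in $K_1(C_u^*|\Z^2|)$.  The geometric picture is that the $1$-dimensional $x$-axis bounds inside the $2$-dimensional $\Z^2$.  Concretely, compare $u$ with the global $x$-translation unitary $V \in C_u^*|\Z^2|$, which agrees with $V_X$ on $X$ and acts as a bilateral shift on every horizontal line: one computes
$$
V^{-1}u = p_X + (1-p_X)V^{-1},
$$
and the task is to construct an explicit unitary homotopy in $C_u^*|\Z^2|$ from this operator back to $1$, for instance by a staircase-type interpolation that spreads the shift across wider and wider horizontal strips while keeping propagation uniformly bounded.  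Once this is established, exactness produces a projection $p$ (in some $M_n(C_u^*|\Z^2|/I)$, stabilizable to $M_2$) with $\partial[p] \neq 0$, which therefore does not lift.

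Main obstacle: the homotopy step -- verifying that $[u] = 0$ in $K_1(C_u^*|\Z^2|)$ -- is the technical crux.  The geometric intuition (a shift along a $1$-dimensional submanifold is a coboundary in the surrounding $2$-dimensional space) is clear, but making it rigorous in the non-separable uniform Roe setting requires a carefully controlled path of finite-propagation unitaries.  If a direct homotopy proves awkward, a dual strategy worth trying is to construct the non-lifting projection explicitly: build a candidate $p \in M_2(C_u^*|\Z^2|/I)$ from half-plane projections and a Toeplitz-type correction along $X$ (using, for instance, the partial isometry $T = P_+ V_y P_+$ with $V_y$ the $y$-shift, for which $T^*T - TT^* = p_X$), and then verify by direct computation that $\exp(2\pi i a)$ represents the generator of $K_1(I)$ for every self-adjoint lift $a$ of $p$.
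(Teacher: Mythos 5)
Your strategy is genuinely different from the paper's, and it is in principle viable, but as written it has a real gap at exactly the point you call the crux: you never prove that $[u]=0$ in $K_1(C^*_u|\Z^2|)$ for $u=V_X+(1-1_X)$. The surrounding reductions are fine (you only need, and only have, the ``only if'' direction of your lifting criterion: non-vanishing of the exponential image obstructs lifting, while vanishing does not in general guarantee it; and $K_1(I)\cong K_1(C^*_u|\Z|)\cong\Z$ via the full corner $1_XC^*_u|\Z^2|1_X\cong C^*_u|\Z|$ and Pimsner--Voiculescu is correct). But the vanishing statement is the whole content, it is not automatic --- $K_1(C^*_u|\Z^2|)$ is a large group, unlike $K_1(C^*|\Z^2|)=0$ --- and ``a staircase-type interpolation of finite-propagation unitaries'' is a hope, not a proof. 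The claim does happen to be true, and needs no homotopy: with $V$ the global $x$-shift and $W$ the $y$-shift, set $w=1_{\{y\geq 0\}}V+1_{\{y<0\}}$ and $w'=1_{\{y\geq 1\}}V+1_{\{y\leq 0\}}$; a direct computation gives $u=w(w')^{*}$, while $w'=WwW^{*}$, so $[u]=[w]-[w']=0$. (Equivalently, in the Pimsner--Voiculescu picture the class of $u$ corresponds to the bounded function $1_{\{0\}}=1_{\{y\geq 0\}}-1_{\{y\geq 1\}}$, which lies in the image of $1-\beta_*$.) With this supplied, exactness gives a $K_0$-class of the quotient with non-zero exponential image, hence a non-liftable projection in some $M_n(C^*_u|\Z^2|/I)$; you would still owe a word on landing in $M_2$ as the lemma literally asserts, though for the real-rank application this is harmless, as you note.

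For comparison, the paper avoids the uniform algebra's $K$-theory altogether: it passes to the Roe algebra $C^*|\Z^2|$, where $K_0\cong\Z$ is generated by a Chern-class-one projection $b\in M_2(C(\T^2))$, $K_1=0$ and $K_0(J)=0$, cuts $b$ by the half-plane projections $1_U,1_L$ (central modulo $I$) to get explicit candidates $p,q\in M_2(C^*_u|\Z^2|/I)$, and shows they cannot both lift by playing the reflection $(x,y)\mapsto(x,-y)$ (which sends $[b]$ to $-[b]$ and swaps $p,q$) against injectivity of $K_0(C^*|\Z^2|)\to K_0(C^*|\Z^2|/J)$, reaching the contradiction $-n=n-1$. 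Your route detects the obstruction through the exponential map into $K_1(I)$ instead; once the vanishing of $[u]$ is actually proved it is a correct and fairly self-contained argument, at the cost of being non-explicit about which projection fails to lift, whereas the paper exhibits concrete ones in $M_2$.
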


As it simplifies the $K$-theoretic computations involved (it is not strictly necessary), we will actually work mainly in the Roe algebra $C^*|\Z^2|$, not $C^*_u|\Z^2|$, which is defined as follows (compare for example \cite[Chapter 6]{Higson:2000bs}).  

\begin{defi}\label{roe alg}
Let $X$ be a proper (i.e.\ closed balls are compact) metric space.  Let $H$ be a separable Hilbert space equipped with a non-degenerate representation of $C_0(X)$, which is also \emph{ample}, meaning that no non-zero element of $C_0(X)$ acts as a compact operator.  An operator $a\in \mathcal{B}(H)$ has \emph{finite propagation} if there is $r>0$ such that $fag=0$ whenever $f,g\in C_0(X)$ satisfy $d(\text{supp}(f),\text{supp}(g))>r$, and is \emph{locally compact} if $fa$ and $af$ are compact operators for all $f\in C_0(X)$.

The \emph{Roe algebra} $C^*(X)$ of $X$ is the $C^*$-subalgebra of $\mathcal{B}(H)$ generated by the finite propagation, locally compact operators.  
\end{defi}

If $X=G$ is a group equipped with some choice of left-invariant bounded geometry metric, then we write $C^*|G|$ for its Roe algebra to avoid confusion with the group $C^*$-algebra $C^*(G)$.  In this case, $C^*|G|$ is isomorphic to $\ell^\infty(G,\mathcal{K})\rtimes_r G$, where $\mathcal{K}$ denotes the compact operators on a separable infinite dimensional Hilbert space, and $G$ acts on $\ell^\infty(G,\mathcal{K})$ by the action induced by the right translation action of $G$ on itself.

The Roe algebra is independent of the choice of $H$ up to $*$-isomorphism, so it is customary to ignore $H$ in the notation (even though the $*$-isomorphisms one gets are non-canonical, only becoming so on the level of $K$-theory \cite[Corollary 6.3.13]{Higson:2000bs}).  For concreteness, we consider the Roe algebra of $\Z^2$ to be represented on $\ell^2(\Z^2)\otimes \ell^2(\N)$, noting that the natural multiplication representation of $C_0(\Z^2)$ is nondegenerate and ample.  The isometric inclusion 
$$
\ell^2(\Z^2)\to \ell^2(\Z^2)\otimes \ell^2(\N),\quad v\mapsto v\otimes \delta_{0}
$$
then induces an inclusion $C^*_u|\Z^2|\to C^*|\Z^2|$ as a corner.  Note that with $X=\{(x,y)\in \Z^2\mid y=0\}$ as above, the characteristic function $1_X$ of $X$ acts by multiplication on $\ell^2(\Z^2)\otimes \ell^2(\N)$, and that this makes it an element of the multiplier algebra of $C^*|\Z^2|$.  It thus makes sense to speak of the ideal $J$ in $C^*|\Z^2|$ generated by $1_X$: precisely, this means the closed ideal generated by $\{a1_Xb\mid a,b\in C^*|\Z^2|\}$.   

Before proving Lemma \ref{no lift} we need a preliminary $K$-theoretic lemma.  To state it, note that if $\mathbb{T}^2$ is the $2$-torus, then having $C(\mathbb{T}^2)\cong C^*(\Z^2)$ act on $\ell^2(\Z^2)$ via the regular representation of $\Z^2$ identifies $C(\mathbb{T}^2)$ with a sub-$C^*$-algebra of $C^*_u|\Z^2|$ in a canonical way.  The next lemma is no doubt well-known, but we include a proof as we could not find a good reference in the literature.  We have tried to keep the proof within `standard $C^*$-algebra $K$-theory' at the expense of making it a little long-winded; shorter proofs are certainly possible, using for example the Riemann-Roch theorem, or pairings with cyclic cohomology.

\begin{lem}\label{bott lem}
We have $K_0(C^*|\Z^2|)\cong K_1(J)\cong \Z$ and $K_1(C^*|\Z^2|)=K_0(J)=0$.  Moreover, under the isomorphism $\Z\cong K_0(C^*|\Z^2|)$, the element $n$ can be represented by any element of $K_0(C(\mathbb{T}^2))$ with first Chern class equal to $n$.
\end{lem}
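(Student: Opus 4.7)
The plan is to compute all the $K$-theory via a coarse Mayer--Vietoris argument applied to the upper/lower half-plane decomposition of $\Z^2$, using flasqueness to collapse the sequence. Let $U^\pm:=\{(x,y)\in\Z^2:\pm y\ge 0\}$, so that $U^+\cap U^-=X$ is the $x$-axis, a copy of $\Z$ as a coarse space. Since the cover is coarsely excisive (for each $r>0$, $N_r(U^+)\cap N_r(U^-)\subseteq N_r(X)$), one has a coarse Mayer--Vietoris six-term sequence for Roe algebras
\[
\cdots\to K_*(C^*|X|)\to K_*(C^*|U^+|)\oplus K_*(C^*|U^-|)\to K_*(C^*|\Z^2|)\xrightarrow{\partial}K_{*-1}(C^*|X|)\to\cdots.
\]
Each half-plane is flasque -- vertical translation by $\pm 1$ is bounded-distance from the identity and its iterates escape to infinity -- so a standard Eilenberg-swindle argument yields $K_*(C^*|U^\pm|)=0$, reducing the sequence to connecting isomorphisms $\partial\colon K_*(C^*|\Z^2|)\xrightarrow{\cong} K_{*-1}(C^*|X|)$.

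Applying the same flasqueness trick to $\Z=\Z_{\ge 0}\cup\Z_{\le 0}$, whose intersection is a single point with Roe algebra $\mathcal{K}$, gives $K_0(C^*|\Z|)=0$ and $K_1(C^*|\Z|)\cong K_0(\mathcal{K})=\Z$, the $K_1$-generator being a shift-type unitary obtained by lifting a rank-one projection. Plugging in, $K_1(C^*|\Z^2|)=0$ and $K_0(C^*|\Z^2|)\cong\Z$. As for the ideal $J\triangleleft C^*|\Z^2|$ generated by $1_X$: its elements are operator-norm limits of operators supported in bounded neighbourhoods of $X$, and these form a sub-$C^*$-algebra canonically $*$-isomorphic to $C^*|X|$, whence $K_0(J)=0$ and $K_1(J)\cong\Z$.

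It remains to identify the composition $K_0(C(\T^2))\to K_0(C^*|\Z^2|)\xrightarrow{\partial}K_1(C^*|\Z|)\cong\Z$ with the first Chern class. The image of $[1_{C(\T^2)}]$ in $C^*|\Z^2|$ is the corner projection $p=1_{\Z^2}\otimes e_{00}$, which splits orthogonally as $p=(1_{U^+}\otimes e_{00})+(1_{\{y<0\}}\otimes e_{00})$ with the summands in $C^*|U^+|$ and $C^*|U^-|$ respectively; hence $[p]$ lies in the image of $K_0(C^*|U^+|)\oplus K_0(C^*|U^-|)\to K_0(C^*|\Z^2|)$, and by exactness $\partial[p]=0$, so $[1]\mapsto 0$. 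For a class of Chern number one, represent it by a Bott/Power--Rieffel projection $\beta$ built from the two translation unitaries $u,v$ generating $C^*(\Z^2)\cong C(\T^2)\subset C^*_u|\Z^2|$, and compute $\partial[\beta]$ directly via a self-adjoint lift of $\beta$ supported in $U^+$; the result should be the class of the shift unitary on $X$, the chosen generator of $K_1(C^*|\Z|)\cong\Z$. The remaining Chern numbers then follow by $\Z$-linearity.

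The main obstacle is this final explicit boundary calculation, since $\beta$ is a rather intricate matrix expression in $u^{\pm 1},v^{\pm 1}$. A cleaner alternative -- outside `standard' $C^*$-algebra $K$-theory -- would be to invoke naturality of the coarse assembly map with respect to the covering map $\R^2\to\T^2$ and identify the composition directly with the first Chern class up to sign.
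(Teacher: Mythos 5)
Your route to the abstract groups is genuinely different from the paper's and is sound: you use the coarse Mayer--Vietoris sequence for the coarsely excisive cover of $\Z^2$ by the two half-planes, kill the half-planes by flasqueness, and iterate on $\Z$ to get $K_0(C^*|\Z^2|)\cong\Z$, $K_1(C^*|\Z^2|)=0$, and $K_*(J)\cong K_*(C^*|\Z|)$. The paper instead writes $C^*|\Z^2|\cong(\ell^\infty(\Z^2,\mathcal{K})\rtimes\Z)\rtimes\Z$ and runs the Pimsner--Voiculescu sequence twice, with explicit unitaries $u_f$ representing $K_1$ of the intermediate crossed product; for $J$ it uses that $C^*|\Z|\cong 1_XC^*|\Z^2|1_X$ is a full corner in $J$. (Minor point: your claim that $J$ is ``canonically $*$-isomorphic to $C^*|X|$'' is too strong -- $J$ is the closure of an increasing union of algebras supported near the axis, and one should argue via the full-corner/Morita equivalence or a direct limit in $K$-theory -- but the conclusion $K_*(J)\cong K_*(C^*|\Z|)$ is correct.)

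However, there is a genuine gap in the part of the lemma that actually matters downstream: the identification of the isomorphism $K_0(C^*|\Z^2|)\cong\Z$ with the first Chern class of classes coming from $C(\T^2)$. Your argument correctly shows $\partial[1]=0$, but for a Chern-number-one class you only assert that the Mayer--Vietoris boundary of the Power--Rieffel projection ``should be'' the generating shift unitary on the axis, and you explicitly flag this computation as the main obstacle without carrying it out; the suggested fallback via naturality of coarse assembly under $\R^2\to\T^2$ is likewise only a sketch. This boundary computation is essentially a Bott-periodicity/Toeplitz index calculation and is the whole content of the ``Moreover'' clause, which is exactly what the paper needs later (e.g.\ to see that the flip $(x,y)\mapsto(x,-y)$ sends $[b]$ to $-[b]$ in the proof of Lemma \ref{no lift}). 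The paper circumvents any explicit Bott-projection calculation by a naturality argument: it compares the Pimsner--Voiculescu sequence for $C^*|\Z^2|$ with the one for $\mathcal{K}\rtimes\Z^2\cong C(\T^2,\mathcal{K})$ (trivial action), tracks the explicit unitary $u_1$ generating the kernel of $1-\beta_*$, and finishes with a diagram chase. To complete your approach you would need either to perform the exponential-lift boundary computation for the Rieffel projection honestly, or to replace it with an analogous naturality/comparison argument adapted to the Mayer--Vietoris setting.
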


The lemma generalizes to higher dimensional groups $\Z^n$ in a natural way: we give it only for $n=2$ for the sake of simplicity in both the proof and statement.

\begin{proof}
We first note that $C^*|\Z|\cong 1_XC^*|\Z^2|1_X$ is a full corner in $J$ by definition, whence $K_i(J)\cong K_i(C^*|\Z|)$ using the results of \cite[Section 5]{Exel:1993pt}.  On the other hand, $K_i(C^*|\Z|)$ is computed in \cite[Proposition 4.9]{Roe:1996dn} using a Pimsner-Voiculescu argument, which gives the computation of $K_i(J)$ in the statement. 

To compute $K_i(C^*|\Z^2|)$ and identify a generator of $K_0$, we use a Pimsner-Voiculescu argument again: while there are certainly other ways to compute the $K$-theory groups abstractly (see for example \cite[Proposition 6.4.10]{Higson:2000bs}), this seems the easiest way to simultaneously identify a generator of $K_0$.  Let then $A=\ell^\infty(\Z^2,\mathcal{K})$, where $\mathcal{K}$ is a copy of the compact operators on a separable infinite dimensional Hilbert space, so that $C^*|\Z^2|$ is isomorphic to $(A\rtimes\Z)\rtimes \Z$, with the first and second copies of $\Z$ acting by shifting in the first and second coordinates respectively; call these actions $\alpha$ and $\beta$ respectively.  Note that $K_0(A)$ identifies with the group $\Z^{\Z^2}$ of all functions from $\Z^2$ to $\Z$: indeed, there is a canonical isomorphism determined by the fact that it takes the class of a projection $p\in M_n(\ell^\infty(\Z^2,\mathcal{K}))\cong \ell^\infty(\Z^2,\mathcal{K})$ to the function $f:\Z^2\to \Z$ defined by $f(m)=\text{rank}(p(m))$.  Moreover, one has that $K_1(A)=0$, for essentially the same reason that $K_1(\mathcal{K})=0$.  Applying the first Pimsner-Voiculescu sequence gives
$$
\xymatrix{ K_0(A) \ar[r]^-{1-\alpha_*} & K_0(A) \ar[r] & K_0(A\rtimes \Z) \ar[d] \\ K_1(A\rtimes \Z)\ar[u] & K_1(A) \ar[l] & K_1(A) \ar[l]^-{1-\alpha_*}}.
$$
Elementary computations show that the top left copy of $1-\alpha_*$ is surjective with kernel given by the collection of functions from $\Z^2$ to $\Z$ that are constant in the first coordinate direction; thus $K_1(A\rtimes \Z)$ is isomorphic to the group $\Z^\Z$ of all functions from $f:\Z\to\Z$.  Given such a function $f:\Z\to \Z$, define a unitary operator $u_f$ on $\ell^2(\Z)\otimes \ell^2(\N)$ by the formula
\begin{equation}\label{usubf}
u_f:\delta_{(x,y)}\otimes \delta_n\mapsto \left\{\begin{array}{ll} \delta_{(x+1,y)}\otimes \delta_n & f(x)>0 \text{ and } 1\leq n\leq f(x) \\ \delta_{(x-1,y)}\otimes \delta_n & f(n)<0 \text{ and } 1\leq n \leq |f(x)| \\ \delta_{(x,y)}\otimes \delta_n & \text{otherwise} \end{array}\right.,
\end{equation}
which makes $u_f$ an element of the unitization of $A\rtimes \Z$.  A direct computation using the explicit description of the Pimsner-Voiculescu sequence as arising from a Toeplitz-like sequence (see for example \cite[Section 5.2]{Cuntz:2007sf}) and an explicit description of the index map in $K$-theory (see for example \cite[Section 9.2]{Rordam:2000mz}) one computes that the class in $K_1(A\rtimes \Z)$ corresponding to a function $f:\Z\to \Z$ under this isomorphism can be represented by $u_f$.

Now, we consider the Pimsner-Voiculescu sequence associated to the second crossed product.  This gives 
$$
\xymatrix{ K_0(A\rtimes \Z) \ar[r]^-{1-\beta_*} & K_0(A\rtimes \Z) \ar[r] & K_0(C^*|\Z^2|) \ar[d] \\ K_1(C^*|\Z^2|)\ar[u] & K_1(A\rtimes \Z) \ar[l] & K_1(A\rtimes \Z) \ar[l]^-{1-\beta_*} ~,}
$$
and using our computations so far, this simplifies to 
\begin{equation}\label{pv2}
\xymatrix{ 0 \ar[r]^-{1-\beta_*} & 0 \ar[r] & K_0(C^*|\Z^2|) \ar[d] \\ K_1(C^*|\Z^2|)\ar[u] & \Z^\Z \ar[l] & \Z^\Z \ar[l]^-{1-\beta_*} ~,}
\end{equation}
with $\beta$ acting via the shift on the domains of functions in $\Z^\Z$.  Elementary computations show that the bottom right $1-\beta_*$ is surjective, with kernel the constant functions from $\Z$ to $\Z$, completing the computation of $K_i(C^*|\Z^2|)$ as abstract abelian groups.  

It remains to compute a generator of $K_0(C^*|\Z^2|)$.  For this, equip $\mathcal{K}$ with the trivial $\Z^2$ action, and consider the inclusion $\mathcal{K}\to A=\ell^\infty(\Z^2,\mathcal{K})$ as constant functions.  Note that $\mathcal{K}\rtimes \Z\cong C(\mathbb{T},\mathcal{K})$ and that $\mathcal{K}\rtimes \Z^2\cong C(\mathbb{T}^2,\mathcal{K})$ for these actions.  The Pimsner-Voiculescu sequence associated to the second crossed product in this case looks like 
$$
\xymatrix{ K_0(C(\T)) \ar[r]^-{1-\beta_*} & K_0(C(\T)) \ar[r] & K_0(C(\T^2)) \ar[d] \\ K_1(C(\T^2))\ar[u] & K_1(C(\T)) \ar[l] & K_1(C(\T)) \ar[l]^-{1-\beta_*} ~;}
$$
as $\beta$ is the identity map in this case, the arrows labeled $1-\beta_*$ are both zero.  Hence combining the right hand portion of this diagram with that of line \eqref{pv2} above and using naturality of the Pimsner-Voiculescu sequence gives a commutative diagram 
$$
\xymatrix{ 0 \ar[r] & K_0(C(\T)) \ar[d] \ar[r] & K_0(C(\T^2)) \ar[d] \ar[r] & K_1(C(\T)) \ar[d] \ar[r] & 0 & \\
& 0 \ar[r] & K_0(C^*|\Z^2|) \ar[r] &  \ar[r]^-{1-\beta_*} \Z^\Z &  \Z^\Z  \ar[r] & 0 ~.}
$$
Adopting notation as in line \eqref{usubf} above, let $1:\Z\to \Z$ is the constant function with value $1$ everywhere and $u_1$ the associated unitary in the unitization of $C^*|\Z^2|$.  Then the group $K_1(C(\T))\cong \Z$ is generated by the class of a unitary that maps to $[u_1]\in K_1(A\rtimes \Z)$.  Note that $[u_1]$ generates of the kernel of $1-\beta_*$.  On the other hand, $K_0(C(\T))\cong \Z$ is generated by the class $[1]$ of the identity, while $K_0(C(\T^2))\cong \Z\oplus \Z$ is generated by $[1]$ and any element of Chern class one.  The result follows from these comments and a diagram chase.
\end{proof}

\begin{proof}[Proof of Lemma \ref{no lift}]
Let $U=\{(x,y)\in \Z^2\mid y>0\}$ and $L=\{(x,y)\in \Z^2\mid y\leq 0\}$.  Then the images of the characteristic functions $1_U$ and $1_L$ in $C^*_u|\Z^2|/I$ are central: to see this, note that $C^*_u|\Z^2|$ is generated by $\ell^\infty(\Z^2)$ and the bilateral shifts $u$ and $v$ in the first and second coordinate directions respectively; $1_U$ and $1_L$ commute with $\ell^\infty(\Z)$ and $u, v$ in $C^*_u|\Z^2|$, while it is not difficult to check that the commutators $[1_U,v]$ and $[1_L,v]$ are in $I$.  

Now, let $b\in M_2(C(\T^2))$ be a projection associated to any line bundle on $\T^2$ with Chern class one; we identity $b$ with its image in $M_2(C^*|\Z^2|)$; moreover by Lemma \ref{bott lem}, the class of $b$ is a generator of $K_0(C^*|\Z^2|)$.   It follows that the images $p$, $q$ respectively of $(1_U\otimes 1_{M_2})b$ and $(1_L\otimes 1_{M_2})b$ are projections in $M_2(C^*_u|\Z^2|/I)$.  Assume for contradiction that they lift to projections $\widetilde{p}$ and $\widetilde{q}$ in $M_2(C^*_u|\Z^2|)$.  Abusing notation, we use the same symbols for the images of these projections in $M_2(C^*|\Z^2|)$ and $M_2(C^*|\Z^2|/J)$.

Now, using Lemma \ref{bott lem}, the six-term exact sequence in $K$-theory 
$$
\xymatrix{ K_0(J) \ar[r] & K_0(C^*|\Z^2|) \ar[r]^-{\pi_*} & K_0(C^*|\Z^2|/J) \ar[d] \\ K_1(C^*|\Z^2|/J) \ar[u] & K_1(C^*|\Z^2|) \ar[l] & K_1(J) \ar[l] }
$$  
simplifies to 
$$
\xymatrix{ 0 \ar[r] & \Z \ar[r]^-{\pi_*} & \Z^2 \ar[d]   \\ 0 \ar[u] & 0 \ar[l] & \Z\ar[l] }
$$
with the copy of $\Z$ in the middle of the top row generated by $[b]$.  As $\pi_*[b]=[p]+[q]$ and $\pi_*$ is injective, we have that $[b]=[\widetilde{p}]+[\widetilde{q}]$.  Let $\alpha:C^*|\Z^2|\to C^*|\Z^2|$ be the $*$-automorphism induced by the the map $(x,y)\mapsto (x,-y)$ on $\Z^2$; this descends to the quotient by $J$, and abusing notation we will write $\alpha$ for the induced map on the quotient, and also on the uniform versions of these objects.  Noting that $\alpha_*[b]$ corresponds to a line bundle with first Chern class equal to $-1$, and using Lemma \ref{bott lem} we see that $\alpha_*[b]=-[b]$.  It follows from this, the fact that $\alpha(1_U)=1_L$ modulo $I$, and the fact that $1_U$ and $1_L$ are central in $C^*_u|\Z^2|/I$ that $\alpha_*[p]=-[q]$.  Hence by injectivity of $\pi_*$ again, we have $\alpha_*[\widetilde{p}]=-[\widetilde{q}]$.  On the other hand, as $K_*(C^*|\Z^2|)\cong \Z$ is generated by $[b]$, there must exist $n\in \Z$ with $[\widetilde{p}]=n[b]$.  Putting this discussion together
$$
-n[b]=\alpha_*(n[b])=\alpha_*[\widetilde{p}]=-[\widetilde{q}]=-([b]-[\widetilde{p}])=(n-1)[b].
$$
This forces $1-n=n$, which is impossible, so we have our contradiction.
\end{proof}

We conclude this section with a few comments and corollaries.  

\begin{cor}\label{cemb cor}
Say $X$ is a bounded geometry metric space such that there is a coarse embedding $f:\Z^2\to X$.  Then $C^*_u(X)$ does not have real rank zero.  
\end{cor}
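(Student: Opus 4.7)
The plan is to reduce to Theorem \ref{rr0z2} by locating a hereditary subalgebra of $C^*_u(X)$ that is ``essentially'' $C^*_u|\Z^2|$.

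First, I would set $Y := f(\Z^2) \subseteq X$ equipped with the subspace metric from $X$, so that $Y$ inherits bounded geometry from $X$ and $f$ becomes a surjective coarse embedding $\Z^2 \to Y$, hence a coarse equivalence. Next, I would observe that the projection $p := 1_Y \in \ell^\infty(X) \subseteq C^*_u(X)$ yields a hereditary $C^*$-subalgebra $p\, C^*_u(X)\, p$ which is canonically isomorphic to $C^*_u(Y)$: indeed, an operator on $\ell^2(X)$ supported on $Y \times Y$ has finite propagation with respect to $d_X$ if and only if its restriction to $\ell^2(Y)$ has finite propagation with respect to the subspace metric on $Y$, so $\C_u[Y] = p\, \C_u[X]\, p$ and the claim follows by closing up.

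The key external input would be that coarsely equivalent bounded geometry metric spaces have stably isomorphic uniform Roe algebras, giving $C^*_u(Y) \otimes \mathcal{K} \cong C^*_u|\Z^2| \otimes \mathcal{K}$. (If $f$ happens to be injective the intertwining is implemented directly by the unitary $\ell^2(\Z^2) \to \ell^2(Y)$ induced by the bijection $f$, using the two-sided coarse-embedding bounds to transfer propagation; the non-injective case is standard and uses that preimages $f^{-1}(y)$ have uniformly bounded cardinality, as $\rho_-$ is proper.)

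The argument then concludes by a standard chain of preservation results for real rank zero: assuming for contradiction that $C^*_u(X)$ has real rank zero, the hereditary subalgebra $C^*_u(Y) \cong p\, C^*_u(X)\, p$ inherits real rank zero by \cite[Corollary 2.8]{Brown:1991gf}; tensoring with $\mathcal{K}$ preserves real rank zero, so $C^*_u(Y) \otimes \mathcal{K}$, and hence $C^*_u|\Z^2| \otimes \mathcal{K}$, has real rank zero; finally $C^*_u|\Z^2|$, being a full corner of $C^*_u|\Z^2| \otimes \mathcal{K}$, also has real rank zero, contradicting Theorem \ref{rr0z2}. The main technical point is the stable isomorphism step under coarse equivalence; everything else is either immediate from definitions or a citation to the Brown–Pedersen framework.
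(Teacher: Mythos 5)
Your proposal is correct and follows essentially the same route as the paper: reduce to Theorem \ref{rr0z2} by identifying a corner $1_{f(\cdot)}C^*_u(X)1_{f(\cdot)}$ with a uniform Roe algebra coarsely equivalent to $C^*_u|\Z^2|$, invoke Morita equivalence/stable isomorphism of uniform Roe algebras under coarse equivalence (the paper cites \cite[Theorem 4]{Brodzki:2007mi}), and conclude with the Brown--Pedersen permanence of real rank zero under hereditary subalgebras and stabilization. The only cosmetic difference is that the paper handles possible non-injectivity of $f$ explicitly by first passing to a coarsely dense subset $Y\subseteq \Z^2$ on which $f$ is injective, whereas you defer that point to the cited coarse-invariance theorem.
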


\begin{proof}
It is straightforward to check that there is a subspace $Y\subseteq \Z^2$ and $c>0$ such that every $n\in \Z^2$ is within distance $c$ of some $y\in Y$, and such that $f$ restricts to an injection on $Y$.  Note that $C^*_u(Y)$ is Morita equivalent to $C^*_u|\Z^2|$ by \cite[Theorem 4]{Brodzki:2007mi}, and thus $C^*_u(Y)$ does not have real rank zero by  \cite[Theorem 3.8]{Brown:1991gf} and Theorem \ref{rr0z2}.  On the other hand, if $Z:=f(Y)\subseteq X$ then $f$ induces an isomorphism $C^*_u(Y)\cong C^*_u(Z)$, and thus $C^*_u(Z)$ does not have real rank zero.  As $C^*_u(Z)$ identifies with the corner $1_ZC^*_u(X)1_Z$ and real rank zero passes to corners by \cite[Theorem 2.5]{Brown:1991gf}, we are done.
\end{proof}

In particular, note that any subgroup inclusion $\Z^2\to G$ of $\Z^2$ into a countable group is a coarse embedding, so Corollary \ref{cemb cor} implies that the uniform Roe algebra $C^*_u|G|$ of any such group $G$ cannot have real rank zero: examples include $\Z^n$ of course, but many other interesting groups such as $SL(n,\Z)$ for any $n\geq 3$ and the Thompson groups $F$, $T$, and $V$.  On the other hand, there are many groups that admit a coarse embedding of $\Z^2$ that do not have $\Z^2$ as a subgroup.  We give two illustrative examples; much more could be said here.

\begin{exem}\label{rr0ex}
Say $G$ is the fundamental group of a closed hyperbolic $3$-manifold $M$.  Then $G$ does not contain $\Z^2$ as a subgroup as (for example) it is word hyperbolic.  Nonetheless, $G$ is coarsely equivalent to the universal cover of $M$, which is hyperbolic $3$-space $\mathbb{H}^3$, and there are natural coarse embeddings $\R^2\to \mathbb{H}^3$ onto horospheres; these give rise to coarse embeddings $\Z^2\to G$ and thus $C^*_u|G|$ does not have real rank zero.  
\end{exem}

\begin{exem}\label{rr0ex2}
Say $G$ is the (first) Grigorchuk group, which is a torsion group so does not contain $\Z^2$ as a subgroup.  Nonetheless, the arguments of \cite{Smith:2007uq} give rise to coarse embeddings $\Z^2\to G$, and thus $C^*_u|G|$ does not have real rank zero.  
\end{exem}

Theorem \ref{rr0z2} can also be adapted to other examples that are rather far from being groups. 

\begin{exem}\label{rr0ex3}
Fix $n\geq 2$ and for each $k\in \N$, let $X_k=\{m\in \Z^n\mid |m|\leq k\}$ equipped with the subspace metric.  Let $X:=\bigsqcup_{k\in \N}X_k$, equipped with any metric that restricts to the given metric on $X_k$, and that satisfies $d(X_k,X\setminus X_k)\to\infty$ as $k\to\infty$.  Then using the limit space machinery of \cite{Spakula:2014aa}, one can show that $C^*_u(X)$ admits a quotient $*$-homomorphism onto $C^*_u|\Z^n|$.  As real rank zero clearly passes to quotients, this forces the real rank of $C^*_u(X)$ to be positive.  Similar arguments apply to show that if $X$ is any box space of $\Z^n$ (or of any group containing a coarsely embedded copy of $\Z^n$) then $C^*_u(X)$ has positive real rank.
\end{exem}

The technique used to prove Theorem \ref{rr0z2} can be adapted to show that if $G$ is the fundamental group of any closed surface, then $C^*_u|G|$ does not have real rank zero.  We do not know any groups of asymptotic dimension at least two that do not contain coarsely embedded copies of the fundamental group of some closed surface (including $\Z^2=\pi_1(\T^2)$).  Indeed, there are some interesting results in this direction: for example, \cite[Theorem 1]{Bonk:2005rt} implies that a word hyperbolic group has asymptotic dimension at least two if and only if it contains a  coarsely embedded copy of the fundamental group of the closed oriented genus two surface (thanks to Erik Guentner for pointing out this reference).

The following conjecture is thus natural; note that the above discussion implies that it is true if $X$ is restricted to the class of word hyperbolic groups.

\begin{conjecture2}
If $X$ is a bounded geometry metric space of asymptotic dimension at least two, then $C^*_u(X)$ does not have real rank zero. 
\end{conjecture2}

On the other hand, if $X$ has asymptotic dimension zero, then Theorem \ref{ad0the} implies that $C^*_u(X)$ does have real rank zero.  However, we do not know what happens for any $X$ of asymptotic dimension one, such as $\Z$ or a non-abelian free group.  For example, it seems interesting to ask what the real rank of the properly infinite (see \cite[Proposition~5.5]{MR2873171} or Theorem~\ref{p.i-para}) $C^*$-algebra $C^*_u|F_2|$ is; in relation to the discussion above, note that $F_2$ is the unique word hyperbolic group of asymptotic dimension one, up to bijective coarse equivalence.  The following question also seems particularly natural.

\begin{question}\label{rr0zq}
Does $C^*_u|\Z|$ have real rank zero? 
\end{question}

Note that $C^*_u|\Z|$ has real rank zero if and only if $C^*_u(\N)$, which is the classical $C^*$-algebra of weighted shift operators, has real rank zero.  Either a positive or a negative answer to Question \ref{rr0zq} would be interesting.  Indeed, if the answer is `no', then the ideas in the proof of Theorem \ref{ad0the} combined with the limit space machinery of \cite{Spakula:2014aa} would show that if $X$ has positive asymptotic dimension, then $C^*_u(X)$ has positive real rank; combined with Theorem \ref{ad0the}, this would completely settle the question of which uniform Roe algebras have real rank zero: indeed, real rank zero would then join the properties in Theorem 2.2 that are equivalent to asymptotic dimension zero.  On the other hand, if the answer is `yes' then let $X$ be as in Example \ref{rr0ex3}, except starting the construction with $\Z$ rather than $\Z^n$ for $n\geq 2$.  Then $C^*_u(X)$ is a corner in $C^*_u|\Z|$ and would thus also have real rank zero.  The results of \cite{Wei:2011kl} imply that $C^*_u(X)$ is stably finite; $C^*_u(X)$ would thus be an example of a stably finite, real rank zero $C^*$-algebra which by Theorem \ref{ad0the} does not have cancellation.  This would answer a well-known open question: see for example \cite[page 455]{MR2188261}.  

In connection to Question \ref{rr0zq} above, note that the real rank of $C^*_u|\Z|$ is at least finite: indeed, this is even true for $\Z^n$ as the stable rank of $C_u^*|\Z^n|$ equals 2 for all $n\in \N$ and thus by \cite[Proposition V.3.2.4]{MR2188261} the real rank of $C_u^*|\Z^n|$ is at most 3 for all $n\in \N$.

As a final remark, one can also consider other existence of projection properties for uniform Roe algebras: one interesting such property is the \emph{ideal property} (see for example \cite[Definition 1.5.2]{Rordam:2002cs}), meaning that every ideal in the given $C^*$-algebra is generated by the projections it contains; this is much weaker than having real rank zero in general.  Now, uniform Roe algebras have the ideal property when they are nuclear (equivalently, when the underlying space has property A as shown in \cite[Theorem 5.5.7]{Brown:2008qy}), as one can see by applying \cite[Theorem 6.4 and Remark 6.5]{Chen:2004bd}: indeed, these results even imply that each ideal has an approximate unit consisting of diagonal projections.  However, \cite[5.2]{Roe:2013rt} shows that in the non-nuclear case the ideal property can fail for uniform Roe algebras: more precisely the so-called \emph{ghost ideal} need not be generated by the projections it contains.

\begin{question}
Does $C^*_u(X)$ fail the ideal property whenever it is not nuclear?
\end{question}

For example, it is conceivable that the ghost ideal is never generated by the projections that it contains in the absence of property A.  One might also ask about specific classes of examples: for example, is the ghost ideal in the uniform Roe algebra of an expander generated by the projections it contains?

\section{Properly infinite projections in uniform Roe algebras}\label{proj sec}

The aim of this section is to prove some results that relate properly infinite projections in uniform Roe algebras to paradoxical decompositions.  This will be useful when we turn to $K$-theoretic questions later in the paper.   Much of this material is already known in the group case (see below for detailed references), and our proofs are fairly similar to the existing ones.  As it follows in a straightforward way from our results, we also give some new characterizations of supramenability for metric spaces in Proposition \ref{supramenable} below; this will not be used in the rest of the paper, however.

First, we recall the definition of paradoxicality.  

\begin{defi}\label{para dec}
Let $X$ be a bounded geometry metric space.  A \emph{partial translation} on $X$ is a bijection $t:A\to B$ between subsets $A,B\subseteq X$ such that $\sup_{x\in A}d(x,t(x))$ is finite.  A subspace $A$ of $X$ is \emph{paradoxical} if there exist a decomposition $A=A_+\sqcup A_-$ and (bijective) partial translations $t_+:A\to A_+$ and $t_-:A\to A_-$.
\end{defi}

Note that by \cite[Theorem~32]{MR1721355} a bounded geometry metric space is paradoxical if and only if it is non-amenable, where we recall that a bounded geometry metric space is amenable if for any $r,\epsilon>0$ there exists a finite subset $F\subseteq X$ such that 
$$
|\{x\in X\mid d(x,F)\leq r\}|\leq (1+\epsilon)|F|.
$$
In particular, it follows that if $A$ and $B$ are coarsely equivalent in the sense of Definition \ref{cemb}, then $A$ is paradoxical if and only if $B$ is: see for example \cite[Corollary~2.2 and Theorem~3.1]{MR1145337} or \cite[Proposition~3.D.33]{MR3561300}.

We are going to extend the following result given in \cite{ALLW17}.
\begin{thm}[{\cite[Theorem~4.9]{ALLW17}}]\label{p.i-para}
Let $X$ be a bounded geometry metric space. Then the following conditions are equivalent:
\begin{enumerate}
\item $X$ is paradoxical;
\item $C_u^*(X)$ is properly infinite;
\item $M_n(C_u^*(X))$ is properly infinite for some $n\in \N$;
\item $[1]_0=[0]_0$ in $K_0(C_u^*(X))$.
\end{enumerate}
\end{thm}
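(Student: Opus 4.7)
The plan is to establish the four-way equivalence by showing that $(1)$ implies each of $(2)$, $(3)$, $(4)$ directly from the paradoxical decomposition, and then closing the loop by proving $(3) \Rightarrow (1)$ and $(4) \Rightarrow (1)$ via the non-existence of tracial states on $C^*_u(X)$, combined with the fact \cite[Theorem~32]{MR1721355} that non-amenability of a bounded geometry space is equivalent to paradoxicality.

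For the forward implications, let $X = A_+ \sqcup A_-$ with partial translations $t_\pm \colon X \to A_\pm$, and define partial isometries $V_\pm \in \C_u[X] \subseteq C^*_u(X)$ by $V_\pm \delta_x = \delta_{t_\pm(x)}$; membership in $C^*_u(X)$ is guaranteed by the finite propagation built into the definition of a partial translation. Then $V_\pm^* V_\pm = 1$ and $V_\pm V_\pm^* = 1_{A_\pm}$, and because $A_+ \sqcup A_- = X$ we have $V_+ V_+^* + V_- V_-^* = 1$. This simultaneously exhibits $1 \in C^*_u(X)$ as a properly infinite projection (yielding $(2)$, and $(3)$ with $n=1$) and computes $[1]_0 = [V_+ V_+^*]_0 + [V_- V_-^*]_0 = 2[1]_0$ in $K_0(C^*_u(X))$, hence $[1]_0 = 0$, which is $(4)$. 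The implication $(2) \Rightarrow (3)$ is trivial in any case.

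For $(3) \Rightarrow (1)$ and $(4) \Rightarrow (1)$ I would argue uniformly through the absence of tracial states on $C^*_u(X)$. Under $(3)$, witnesses $s_1, s_2$ for proper infiniteness of $1_n \in M_n(C^*_u(X))$ satisfy $s_i^* s_i = 1_n$ and $s_1 s_1^* + s_2 s_2^* \leq 1_n$, so any tracial state $\tau$ on $M_n(C^*_u(X))$ would force $2 = \tau(s_1 s_1^*) + \tau(s_2 s_2^*) \leq \tau(1_n) = 1$, a contradiction; since any tracial state on $C^*_u(X)$ would extend to $M_n(C^*_u(X))$ via $\tau \otimes n^{-1} \mathrm{tr}_n$, neither algebra admits one. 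Under $(4)$, any tracial state $\tau$ on $C^*_u(X)$ would induce $\tau_* \colon K_0(C^*_u(X)) \to \R$ with $\tau_*[1]_0 = \tau(1) = 1 \neq 0$, contradicting $[1]_0 = 0$. On the other hand, if $X$ were amenable, averaging the diagonal matrix-entry states over a F\o lner sequence along a free ultrafilter would produce a tracial state on $C^*_u(X)$, the F\o lner condition ensuring that the commutator obstruction on finite-propagation elements vanishes in the limit. Hence $X$ is non-amenable, and \cite[Theorem~32]{MR1721355} promotes this to paradoxicality.

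The main and essentially only nontrivial external input is \cite[Theorem~32]{MR1721355}, which upgrades non-amenability of a bounded geometry metric space to a genuine paradoxical decomposition; everything else is routine manipulation with partial isometries, tracial states, and $K_0$ classes. The step most worth care is the F\o lner-to-trace construction on $C^*_u(X)$, which plays the role of the familiar invariant-mean argument from the group setting. It is worth noting that the equivalence $(2) \Leftrightarrow (4)$ is not formal for general unital $C^*$-algebras (as the Cuntz algebra $\mathcal{O}_\infty$ witnesses); that it holds for uniform Roe algebras is a genuinely geometric fact, mediated by the detour through paradoxicality.
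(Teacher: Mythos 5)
Your argument is correct, but note that the paper does not actually prove this theorem: it is quoted verbatim from \cite[Theorem~4.9]{ALLW17}, so there is no internal proof to compare against, and what you have written is a legitimate self-contained substitute built from the same circle of ideas. The forward direction (the partial translations give isometries $V_\pm\in\C_u[X]$ with $V_+V_+^*+V_-V_-^*=1$, whence proper infiniteness of the unit and $[1]_0=2[1]_0=0$) is the standard one, and your way of closing the loop --- (3) or (4) excludes tracial states, F{\o}lner averaging of the diagonal vector states produces a tracial state whenever $X$ is amenable, and \cite[Theorem~32]{MR1721355} upgrades non-amenability to paradoxicality --- is sound; the paper itself invokes that same Tarski-type equivalence just before the statement, so this external input is fair game. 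The F{\o}lner-to-trace step you flag is indeed the only place where an estimate is required, and it works: for $a$ of propagation at most $r$ and arbitrary $b\in C^*_u(X)$, the quantity $\frac{1}{|F|}\sum_{x\in F}\bigl((ab)_{xx}-(ba)_{xx}\bigr)$ is a sum over pairs $(x,y)$ with $d(x,y)\leq r$ and exactly one of $x,y$ in $F$, so bounded geometry together with $|\{x: d(x,F)\leq r\}|\leq(1+\epsilon)|F|$ bounds it by a constant times $\epsilon\|a\|\,\|b\|$, and density of finite-propagation operators plus continuity of the ultralimit state finishes the verification of the trace property. Two minor quibbles: your closing remark is slightly off, since $\mathcal{O}_\infty$ only witnesses that (2) does not formally imply (4) --- the converse implication (4)$\Rightarrow$(2) for general unital $C^*$-algebras is a well-known open problem, while (4)$\Rightarrow$(3) is in fact formal --- and in (1)$\Rightarrow$(2) it is worth one sentence to note $V_\pm$ lie in $\C_u[X]$ because their propagation is bounded by $\sup_x d(x,t_\pm(x))$, which you do indicate. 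Compared with the matrix-amplification bookkeeping the paper carries out in Proposition~\ref{Para_subset} via the identification $M_n(C^*_u(A))\cong C^*_u(A\times\{1,\dots,n\})$ and coarse invariance of paradoxicality, your uniform trace-based treatment of (3) and (4) is a clean alternative.
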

Recall that a projection $p$ in a $C^*$-algebra $A$ is \emph{properly infinite} if and only if there exist partial isometries $x$ and $y$ in $A$ such that $x^*x=y^*y=p$ and $xx^*+yy^*\leq p$. Moreover, a unital $C^*$-algebra is properly infinite if and only if the unit is properly infinite.

The following proposition extends part of Theorem \ref{p.i-para} above, as well as extending \cite[Proposition~5.5]{MR2873171} from countable discrete groups to metric spaces with bounded geometry.
\begin{prop}\label{Para_subset}
Let $X$ be a bounded geometry metric space and $A$ be a subspace of $X$. Then the following conditions are equivalent:
 \begin{enumerate}
   \item $A$ is a paradoxical subspace of $X$;
   \item $1_A$ is a properly infinite projection in $C_u^*(X)$;
   \item the $n$-fold direct sum $1_A\otimes 1_n$ is properly infinite in $M_n(C^*_u(X))$ for every $n\in \N$;
   \item the $n$-fold direct sum $1_A\otimes 1_n$ is properly infinite in $M_n(C^*_u(X))$ for some $n\in \N$.
  \end{enumerate}
\end{prop}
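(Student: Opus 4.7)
The plan is to reduce the proposition to Theorem \ref{p.i-para} applied to $A$ itself, viewed as a bounded geometry metric space with the induced metric. Two observations will make this reduction work. First, paradoxicality of $A$ as a subspace of $X$ in the sense of Definition \ref{para dec} coincides with paradoxicality of $A$ regarded as a metric space on its own: the partial translations only involve the metric restricted to $A$, so it does not matter whether one views them inside $X$ or inside $A$. Second, $C_u^*(A)$ identifies canonically with the corner $1_A C_u^*(X) 1_A$ of $C_u^*(X)$; this is a standard fact for uniform Roe algebras, and is already used in the proof of Corollary \ref{cemb cor}.

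The next step is to transport the proper infiniteness conditions through the corner identification. Under it, $1_A \otimes 1_n \in M_n(C_u^*(X))$ corresponds to the unit of $M_n(C_u^*(A))$, sitting inside $M_n(C_u^*(X))$ as the corner cut by $1_A \otimes 1_n$. I will then invoke the standard fact that for any projection $p$ in a $C^*$-algebra $B$, proper infiniteness of $p$ in $B$ is equivalent to proper infiniteness of $p$ in $pBp$: any partial isometry $v$ with $v^*v = p$ and $vv^* \leq p$ automatically satisfies $v = pvp$, so the partial isometries witnessing proper infiniteness already live in the corner. Applied with $p = 1_A \otimes 1_n$, this translates conditions (2), (3), (4) of the proposition into the statements that $C_u^*(A)$ is properly infinite, that $M_n(C_u^*(A))$ is properly infinite for every $n \in \N$, and that $M_n(C_u^*(A))$ is properly infinite for some $n \in \N$, respectively.

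With these translations in place, I will conclude by invoking Theorem \ref{p.i-para} directly applied to $A$: conditions (1), (2), (4) of the present proposition correspond exactly to conditions (1), (2), (3) of Theorem \ref{p.i-para} for $A$, and to close the loop it suffices to note that if $C_u^*(A)$ is properly infinite then $M_n(C_u^*(A))$ is properly infinite for every $n$, which is immediate from the definition by ampliation. I do not anticipate any substantive obstacle, since all of the real content (the equivalence of paradoxicality with proper infiniteness of the unit) is already contained in Theorem \ref{p.i-para}. The only point requiring care is to verify that the corner identification $C_u^*(A) \cong 1_A C_u^*(X) 1_A$ is compatible with matrix amplification in the expected way, namely that $M_n(1_A C_u^*(X) 1_A) = (1_A \otimes 1_n) M_n(C_u^*(X)) (1_A \otimes 1_n)$ as subalgebras of $M_n(C_u^*(X))$; this is routine but should be stated explicitly before applying the corner argument above.
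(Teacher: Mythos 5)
Your proof is correct and follows essentially the same route as the paper: both reduce the statement to Theorem \ref{p.i-para} via the identification $C^*_u(A)=1_AC^*_u(X)1_A$ and the observation that proper infiniteness of $1_A\otimes 1_n$ passes between $M_n(C^*_u(X))$ and the corner $M_n(C^*_u(A))$. The only inessential difference is in closing the loop from (4): you invoke condition (3) of Theorem \ref{p.i-para} directly for the space $A$, whereas the paper identifies $M_n(C^*_u(A))\cong C^*_u(A\times\{1,\dots,n\})$ and then uses coarse invariance of paradoxicality.
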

\begin{proof}
$(1)\Rightarrow (2)$: It follows from Theorem \ref{p.i-para} that the space $A$ is paradoxical if and only if the uniform Roe algebra $C_u^*(A)$ of A is properly infinite. Since the projection $1_A$ is the unit in $C_u^*(A)$, $1_A$ is properly infinite in $C_u^*(A)$. As $C_u^*(A)=1_AC_u^*(X)1_A$ is a corner of $C_u^*(X)$, we may conclude that $1_A$ is also properly infinite in $C_u^*(X)$.

$(2) \Rightarrow (3)$: This follows easily from the definition of proper infiniteness.

$(3) \Rightarrow (4)$: Clear.

$(4) \Rightarrow (1)$: Since $1_A\otimes 1_n$ is properly infinite in $M_n(C_u^*(X))$ for some $n\in \N$, it is still properly infinite in the corner $(1_A\otimes 1_n)M_n(C_u^*(X))(1_A\otimes 1_n)=M_n(C_u^*(A))$. In particular, $M_n(C_u^*(A))$ is a properly infinite $C^*$-algebra.

If we use the discrete metric on $\{1,2,\cdots,n\}$ and the sum metric on the product space $A\times \{1,2,\ldots,n\}$, then we get a natural identification $M_n(C_u^*(A))\cong C_u^*(A\times \{1,2,\ldots,n\})$. Hence, it again follows from Theorem \ref{p.i-para} that the product space $A\times \{1,2,\ldots,n\}$ is paradoxical. As the two spaces $A\times \{1,2,\ldots,n\}$ and $A$ are coarsely equivalent, we conclude that the space $A$ is also paradoxical.
\end{proof}

The following corollary is also known for discrete groups: see \cite[Lemma~5.1]{MR3158244}. The same arguments also work for metric spaces, but we include a proof for the reader's convenience.

\begin{cor}\label{generate ideal}
Let $X$ be a bounded geometry metric space and let $p\in {\ell}^\infty(X)$ and $q\in C_u^*(X)$ be two projections that generate the same closed two-sided ideal in $C^*_u(X)$. If $q$ is properly infinite, then $p$ is also properly infinite in $C_u^*(X)$.
\end{cor}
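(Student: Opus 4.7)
The plan is to reduce, via Proposition~\ref{Para_subset}, to showing that some matrix amplification $p\otimes 1_n$ is properly infinite in $M_n(C_u^*(X))$. Since $p\in\ell^\infty(X)$ is a projection, we may write $p=1_A$ for some $A\subseteq X$, and then Proposition~\ref{Para_subset} (the implication $(4)\Rightarrow(2)$) will finish the argument as soon as such an $n$ has been produced.

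The hypothesis on $p$ and $q$ unpacks into the two ideal-membership relations $p\in\overline{C_u^*(X)\,q\,C_u^*(X)}$ and $q\in\overline{C_u^*(X)\,p\,C_u^*(X)}$, and we will extract a Cuntz subequivalence from each. From the first containment, together with the proper infiniteness of $q$, the standard absorption principle for properly infinite projections (as in the argument of \cite[Lemma~5.1]{MR3158244}, going back to Kirchberg--R\o{}rdam) yields $p\precsim q$ in $C_u^*(X)$. From the second containment, a routine approximation argument --- approximate $q$ in norm by finite sums $\sum_{i=1}^{n}x_i p y_i$, assemble the $x_i$'s and $y_i$'s into a row and column over $p\otimes 1_n$, and apply the R\o{}rdam-type lemma on Cuntz subequivalence via approximation of positive elements --- produces some $n\in\N$ with $q\precsim p\otimes 1_n$ in $M_n(C_u^*(X))$. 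Crucially, this second step does not require any infiniteness assumption on $p$.

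Combining these subequivalences and using that proper infiniteness of $q$ gives $q\otimes 1_k\precsim q$ for every $k\geq 1$, we obtain in $M_{n+1}(C_u^*(X))$
\begin{equation*}
p\otimes 1_{n+1}\ \precsim\ q\otimes 1_{n+1}\ \precsim\ q\ \precsim\ p\otimes 1_n,
\end{equation*}
i.e.\ $(p\otimes 1_n)\oplus p\precsim p\otimes 1_n$. Iterating this $n$ times yields $(p\otimes 1_n)\oplus(p\otimes 1_n)\precsim p\otimes 1_n$, which is exactly the statement that $p\otimes 1_n$ is properly infinite in $M_n(C_u^*(X))$. Proposition~\ref{Para_subset} then gives that $p$ itself is properly infinite in $C_u^*(X)$, as required.

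The main obstacle is invoking the two Cuntz-subequivalence inputs in sufficient generality: the absorption lemma for properly infinite projections and the auxiliary lemma relating ideal-membership to subequivalence in a matrix amplification are classically formulated for separable (and often simple) $C^*$-algebras, whereas $C_u^*(X)$ is typically non-separable. Their proofs go through verbatim in the non-separable unital setting, so this amounts to careful sourcing rather than genuine additional work.
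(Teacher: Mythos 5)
Your argument is correct and is essentially the paper's own proof: both extract from the common ideal the subequivalences $p\precsim q\otimes 1_m$ (equivalently, after absorption, $p\precsim q$) and $q\precsim p\otimes 1_n$, use $q\otimes 1_k\precsim q$ from proper infiniteness of $q$ to conclude that $p\otimes 1_n$ is properly infinite in $M_n(C^*_u(X))$, and then invoke Proposition~\ref{Para_subset} to descend to $p$. The only differences --- deriving $p\precsim q$ first and iterating $(p\otimes 1_n)\oplus p\precsim p\otimes 1_n$ rather than writing the single chain through $q\otimes 1_{mn}$ --- are cosmetic.
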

\begin{proof}
With notation as in \cite[Definition II.3.4.3 (ii)]{MR2188261}, \cite[Corollary II.5.2.11]{MR2188261} implies that $p \precsim q\otimes 1_m$ and $q\precsim p\otimes 1_n$ relatively to $C^*_u(X)$ for some $n,m\in \N$. As $q$ is properly infinite, we have $q\otimes 1_k \precsim q$ for all $k\in \N$. Hence,
\begin{align*}
(p\otimes 1_n)\oplus (p\otimes 1_n)\precsim(q\otimes 1_{mn})\oplus (q\otimes 1_{mn}) \precsim q \precsim p\otimes 1_n.
\end{align*}
It follows that $p\otimes 1_n$ is properly infinite in $M_n(C_u^*(X))$. We conclude from Proposition \ref{Para_subset} that $p$ itself is properly infinite in $C^*_u(X)$.
\end{proof}

The remainder of this section consists of some generalizations of results from \cite{MR3158244}.  This material will not be used in the rest of the paper; however, it is straightforward to demonstrate from the material above so we thought it was worth including.

\begin{defi}
A bounded geometry metric space $X$ is \emph{supramenable} if every non-empty subspace of $X$ is non-paradoxical.
\end{defi}
Equivalently, $X$ is supramenable if and only if every non-empty subspace of $X$ is amenable.  As an example, note that a bounded geometry metric space of subexponential growth is supramenable by \cite[Theorem~66]{MR1721355}. We do not know whether the direct product of two finitely generated supramenable groups is supramenable. However, there exist two supramenable bounded geometry metric spaces such that the direct product is not supramenable \cite[Example~74]{MR1721355}.

We observe that supramenability is invariant under coarse equivalence between metric spaces with bounded geometry as this is the case for amenability (see \cite[Corollary~2.2 and Theorem~3.1]{MR1145337} or \cite[Proposition~3.D.33]{MR3561300}). 

Now we strengthen Proposition \ref{Para_subset} by following the proof in \cite[Proposition~5.3]{MR3158244} for discrete groups as follows.

\begin{prop}\label{supramenable}
Let $X$ be a bounded geometry metric space with property A. Then the following two conditions are equivalent:
\begin{enumerate}
   \item $X$ is supramenable;
   \item the uniform Roe algebra $C_u^*(X)$ contains no properly infinite projections.
  \end{enumerate}
  \end{prop}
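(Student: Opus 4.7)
The direction $(2)\Rightarrow(1)$ does not use property A and is immediate from what has already been proved: contrapositively, if $X$ is not supramenable then some non-empty $A\subseteq X$ is paradoxical, and Proposition \ref{Para_subset} produces the properly infinite projection $1_A\in C_u^*(X)$.

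For $(1)\Rightarrow(2)$ I will argue by contrapositive. Suppose $q\in C_u^*(X)$ is a properly infinite projection, and let $E\colon C_u^*(X)\to \ell^\infty(X)$ be the canonical faithful conditional expectation onto the diagonal, $E(a)(x)=\langle a\delta_x,\delta_x\rangle$. Set $A:=\{x\in X : E(q)(x)\neq 0\}$; since $q$ is non-zero and positive and $E$ is faithful, $A$ is non-empty. The Cauchy--Schwarz bound $|q_{xy}|^2\leq q_{xx}q_{yy}$ for matrix entries of a positive operator forces $q_{xy}=0$ whenever $x\notin A$ or $y\notin A$, so $q=1_A q 1_A$ and $q$ lies in the closed two-sided ideal of $C_u^*(X)$ generated by the diagonal projection $1_A\in \ell^\infty(X)\subseteq C_u^*(X)$.

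The key step, and the one place where property A is indispensable, is the reverse inclusion: $1_A$ lies in the closed two-sided ideal generated by $q$. Property A for $X$ is equivalent to topological amenability of the coarse groupoid $G(X)$, and under the identification $C_u^*(X)\cong C_r^*(G(X))$ this implies that every closed two-sided ideal of $C_u^*(X)$ is generated by its intersection with the diagonal $\ell^\infty(X)=C(\beta X)$, corresponding bijectively to the open invariant subsets of $\beta X$. Combining this structural fact with the observation that $E(q)$ is strictly positive exactly on $A$ yields that $1_A$ and $q$ generate the same closed two-sided ideal in $C_u^*(X)$. Corollary \ref{generate ideal} then forces $1_A$ to be properly infinite in $C_u^*(X)$, and Proposition \ref{Para_subset} promotes this to the statement that $A$ is a non-empty paradoxical subspace of $X$, contradicting supramenability.

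The main obstacle is thus the ideal-comparison step: the passage from an abstract properly infinite projection $q$ to a diagonal projection $1_A$ generating the same ideal is exactly where amenability of the coarse groupoid is used, and without property A one has no reason to expect an arbitrary projection to generate the same ideal as a diagonal one. Once this step is in place, the remainder of the proof is a short concatenation of Proposition \ref{Para_subset}, Corollary \ref{generate ideal}, and the elementary Cauchy--Schwarz observation on diagonal entries of positive operators.
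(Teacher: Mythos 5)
Your argument for $(2)\Rightarrow(1)$ coincides with the paper's and is fine. The gap is in $(1)\Rightarrow(2)$, at exactly the step you flag as the key one: the claim that property A together with ``$E(q)$ is strictly positive exactly on $A$'' forces $1_A$ and $q$ to generate the same closed two-sided ideal. The structural fact you invoke (under property A every closed ideal $I$ of $C^*_u(X)$ is generated by $I\cap\ell^\infty(X)$; this is the Chen--Wang result the paper also uses) does not yield this: it gives $I\cap\ell^\infty(X)=C_0(U)$ for an invariant open $U\subseteq \beta X$, and at best $E(q)\in C_0(U)$, but $1_A\in I$ would require the closure of $A$ in $\beta X$ to be contained in $U$, and pointwise positivity of $E(q)$ on $A$ gives no uniform lower bound, so nothing of the sort follows. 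In fact the asserted ideal equality is false in general. Take $X=\N$ (which has property A), $a_n=4^n$, $b_n=4^n+n$, $v_n=\sqrt{1-n^{-2}}\,\delta_{a_n}+n^{-1}\delta_{b_n}$, and $q=\sum_n v_nv_n^*$. Then $q$ is a projection in $C^*_u(\N)$ (replacing each block with $n>N$ by its diagonal part gives an operator of propagation at most $N$ within $1/N$ of $q$), and $A=\{a_n\}\cup\{b_n\}$ is precisely where $E(q)$ is non-zero. But if $c=\sum_i a_iqb_i$ with all $a_i,b_i$ of propagation at most $r$, then for all sufficiently large $n$ the point $b_n$ is $r$-far from every point of $A$ other than itself, so the $(b_n,b_n)$ entry of $c$ equals $\sum_i (a_i)_{b_nb_n}\,q_{b_nb_n}\,(b_i)_{b_nb_n}=O(n^{-2})$; hence every element of the closed ideal generated by $q$ has $(b_n,b_n)$ entries tending to $0$, whereas those of $1_A$ are all $1$. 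So $1_A$ does not lie in the ideal generated by $q$, even though $X$ has property A and $E$ is faithful; proper infiniteness of $q$ is not used in your deduction at that point and does not obviously repair it.

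The repair is the paper's proof, which keeps your skeleton (produce one diagonal projection generating the same ideal as $q$, then apply Corollary \ref{generate ideal} and Proposition \ref{Para_subset}) but does not insist on the particular set $A=\{x\in X: E(q)(x)\neq 0\}$. By Chen--Wang, $I$ is generated by $I\cap\ell^\infty(X)$; since $\beta X$ is totally disconnected, $I\cap\ell^\infty(X)$ is the closed span of the diagonal projections it contains, so $I$ is the closure of the increasing union of the ideals $I_\alpha$ generated by finite sets $\alpha$ of such projections. That union is a dense two-sided ideal of $I$, hence contains the Pedersen ideal of $I$ and therefore every projection of $I$; in particular $q\in I_\alpha$ for some $\alpha$, so $I=I_\alpha$ is generated by the single diagonal projection $1_B$, where $B$ is the union of the supports of the members of $\alpha$ (and $B$ need not be your $A$). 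Now $1_B$ and $q$ generate the same ideal, Corollary \ref{generate ideal} makes $1_B$ properly infinite, and Proposition \ref{Para_subset} makes $B$ paradoxical, contradicting supramenability.
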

\begin{rem}
The property A condition in Proposition~\ref{supramenable} is only used in the proof of the implication $(1)\Rightarrow (2)$ and this condition can be removed if we deal with discrete groups as in \cite[Proposition~5.3]{MR3158244}. The reason property A is not needed for groups is that (supr)amenability implies property A for (countable) discrete groups, but this is not the case for general metric spaces with bounded geometry. For instance, a box space of a finitely generated residually finite group $\Gamma$ is always (supr)amenable. However, such a box space has property A if and only if the group $\Gamma$ is amenable as shown in \cite[Proposition 11.39]{Roe:2003rw}.  On the other hand, we have no counterexample to a version of Proposition \ref{supramenable} without the property A assumption.
\end{rem}

In order to prove Proposition \ref{supramenable}, we need the following theorem due to Gert K. Pedersen.

\begin{thm}[{\cite[Section~5.6]{MR548006} or \cite[Theorem II. 5.2.4]{MR2188261}}]\label{pedersen}
Every $C^*$-algebra $A$ contains a smallest dense two-sided ideal $Ped(A)$, called the \emph{Pedersen ideal} of $A$. Moreover, $Ped(A)$ contains all projections in $A$.\qed
\end{thm}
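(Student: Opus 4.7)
The plan is to construct $\operatorname{Ped}(A)$ explicitly as the complex linear span of a hereditary cone in $A^+$, then verify density, minimality among dense two-sided ideals, and the statement about projections. Define
$$
K_0(A)^+ := \{a \in A^+ : \exists\, b \in A^+ \text{ with } ba = a\}.
$$
By continuous functional calculus, $a \in K_0(A)^+$ if and only if $a = f(c)$ for some $c \in A^+$ and some continuous $f \colon [0, \|c\|] \to [0, \infty)$ vanishing on a neighbourhood of $0$ (the witness being $b = g(c)$ with $g$ continuous, $g \equiv 1$ on $\operatorname{supp}(f)$, $g(0)=0$). In particular, any projection $p$ satisfies $p \cdot p = p$, so $p \in K_0(A)^+$; this immediately delivers the last clause of the theorem once we know $K_0(A)^+ \subseteq \operatorname{Ped}(A)$.

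The next step is to show that $K_0(A)^+$ is a hereditary cone in $A^+$. Closure under multiplication by positive scalars is trivial. Heredity (if $0 \le c \le a$ and $a \in K_0(A)^+$ then $c \in K_0(A)^+$) follows by observing that a witness $b$ for $a$ can be replaced with $b' = h(b)$ for a suitable cutoff $h$, and then $b'c = c$ via a positivity argument using $\|(1-b')c^{1/2}\|^2 \le \|(1-b')a(1-b')\| = 0$. The main technical obstacle is closure under addition: given $b_i a_i = a_i$ with $0 \le b_i \le 1$, one produces a joint witness as $h(b_1 + b_2)$ for $h \colon [0,2] \to [0,1]$ continuous with $h(0) = 0$ and $h \equiv 1$ on $[1, 2]$, and checks using the identities $(1 - b_i)a_i = 0$ that $h(b_1+b_2)(a_1+a_2) = a_1 + a_2$. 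Granted these facts, the complex linear span $K(A) := \operatorname{span}_{\mathbb{C}} K_0(A)^+$ is automatically a self-adjoint two-sided ideal, the two-sided property reducing via the polarization $4b^*a = \sum_{k=0}^{3} i^k (a + i^k b)^* (a + i^k b) - \text{similar}$ and heredity to the cone property.

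Density of $K(A)$ in $A$ is straightforward: for $a \in A^+$ and $f_n(t) := (t - 1/n)_+$, each $f_n(a)$ lies in $K_0(A)^+$ (witnessed by a continuous function of $a$ equal to $1$ on $[1/n, \|a\|]$) and $\|f_n(a) - a\| \to 0$; the general case follows from $A = A^+ - A^+ + i(A^+ - A^+)$. Minimality is the subtlest point and the step I would treat most carefully: given any dense two-sided ideal $J$ and $a \in K_0(A)^+$ with $ba = a$, $0 \le b \le 1$, I would factor $a = b^{1/2}(b^{1/2} a)$ and, using that density of $J$ in $A$ together with the hereditary subalgebra $\overline{bAb}$ provides an approximate unit for $\overline{bAb}$ lying in $J$, invoke the Cohen--Hewitt factorization theorem inside $\overline{bAb}$ to write $b^{1/2} = j \cdot y$ with $j \in J$ and $y \in \overline{bAb}$; then $a = j \cdot y \cdot b^{1/2} a \in J \cdot A \subseteq J$, so $K_0(A)^+ \subseteq J$, and thus $K(A) \subseteq J$. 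Setting $\operatorname{Ped}(A) := K(A)$ then gives a dense two-sided ideal contained in every other dense two-sided ideal, and the observation on projections from the first paragraph completes the proof.
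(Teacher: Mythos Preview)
The paper does not prove this statement: it is quoted from Pedersen's and Blackadar's books and closed with a terminal \qed, serving purely as a black box in the proof of Proposition~\ref{supramenable}. So there is no argument in the paper to compare against; one can only assess your attempt on its own merits, and there are two genuine gaps.

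First, your argument that $K_0(A)^+$ is closed under addition fails. Take $A=M_2(\C)$, $a_1=b_1=e_{11}$ and $a_2=b_2$ the rank-one projection onto the span of $(1,1)$; then $b_1+b_2$ has eigenvalues $1\pm 1/\sqrt{2}$, and for any continuous $h$ with $h(0)=0$, $h\equiv 1$ on $[1,2]$ but $h(1-1/\sqrt{2})<1$ one computes directly that $h(b_1+b_2)(a_1+a_2)\neq a_1+a_2$. The identities $(1-b_i)a_i=0$ give no control over the spectral subspace of $b_1+b_2$ below $1$ once $b_1,b_2$ fail to commute. The standard construction (Pedersen, \S 5.6) avoids this issue entirely by defining $\operatorname{Ped}(A)$ as the span of the hereditary cone \emph{generated} by $K_0(A)^+$, namely $\{a\in A^+: a\le \sum_{i=1}^n x_i \text{ for some } x_i\in K_0(A)^+\}$, rather than of $K_0(A)^+$ itself; closure of that larger cone under addition is then immediate.

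Second, the minimality step via Cohen--Hewitt does not close. The factorisation theorem yields $b^{1/2}=j\cdot y$ with $j$ a norm limit of combinations of the chosen approximate unit; if that approximate unit lies in $J$ this places $j$ only in $\overline{J}$, and $J$ is not assumed closed. The direct argument is both shorter and correct: given $ba=a$ with $0\le b\le 1$, pick $c\in J$ with $\|b-c\|<1$ by density and set $d=b-c$; then $(1-d)a=a-(b-c)a=ca\in J$, and since $\|d\|<1$ the element $1-d$ is invertible in the unitisation $\tilde A$, whence $a=(1-d)^{-1}(ca)\in \tilde A\cdot J\subseteq J$.
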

\begin{proof}[Proof of Proposition \ref{supramenable}]
$(1) \Rightarrow (2)$: Suppose that (2) does not hold, so $C^*_u(X)$ contains a properly infinite projection $p$. If $I$ denotes the closed two-sided ideal in $C^*_u(X)$ generated by $p$, then it follows from \cite[Theorem~6.4 and Remark~6.5]{Chen:2004bd} that $I$ is the closed two-sided ideal generated by $\ell^\infty(X)\cap I$.

Let $\mathbb{A}$ be the directed set of all finite subsets of the set of all projections in $\ell^\infty(X)\cap I$. For each $\alpha\in \mathbb{A}$, let $I_\alpha$ be the closed two-sided ideal in $C^*_u(X)$ generated by $\alpha$. Then
\begin{align*}
I=\bar{\bigcup_{\alpha\in \mathbb{A}}I_\alpha}.
\end{align*}
Since $\bigcup_{\alpha\in \mathbb{A}}I_\alpha$ is a dense two-sided ideal in $I$, it contains the Pedersen ideal $Ped(I)$, and hence it contains all projections in $I$ by Theorem \ref{pedersen}. It follows that $p\in I_\alpha$ for some $\alpha\in \mathbb{A}$ and $I=I_\alpha$. Let $q$ be the supremum of all projections belonging to $\alpha$. Since all these projections belong to $\ell^\infty(X)$, it is easy to see that $I_\alpha$ is the closed two-sided ideal generated by $q$. Hence, $p$ and $q$ generate the same closed two-sided ideal $I$ in $C^*_u(X)$, whence $q$ is properly infinite by Corollary \ref{generate ideal}. Since $q$ belongs to $\ell^\infty(X)$, we may complete the proof using Proposition \ref{Para_subset}.

$(2) \Rightarrow (1)$: If $A$ is any subspace of $X$, then the projection $1_A$ is not properly infinite in $C_u^*(X)$ by assumption. It follows from Proposition \ref{Para_subset} that $A$ is non-paradoxical. Hence, $X$ is supramenable.
\end{proof}
We end this section with a description of supramenability in terms of the existence of injective coarse embeddings of free groups. The following proposition generalizes \cite[Proposition~3.4]{MR3158244} from discrete groups to metric spaces.  To state it, recall that a metric space is \emph{uniformly discrete} if there is $\delta>0$ such that $d(x,y)\geq \delta$ for all distinct $x,y\in X$.  Recall moreover and that a map $f:X\to Y$ between metric spaces is \emph{uniformly expansive} if for all $r>0$, $\sup\{d_Y(f(x_1),f(x_2))\mid x_1,x_2\in X,~d(x_1,x_2)\leq r\}$ is finite; note that uniform expansivity is weaker than being a coarse embedding in the sense of Definition \ref{cemb}.

\begin{prop}\label{supra prop}
Let $X$ be a uniformly discrete bounded geometry metric space. Then the following are equivalent:
 \begin{enumerate}
   \item $X$ is not supramenable;
   \item there is a bi-Lipschitz embedding of the free group $F_2$ into $X$;
   \item there is an injective coarse embedding of the free group $F_2$ into $X$;
   \item there is a coarse embedding of the free group $F_2$ into $X$;
   \item there is an injective uniformly expansive map from the free group $F_2$ into $X$.
  \end{enumerate}
\end{prop}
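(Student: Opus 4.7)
I would prove the implications (2) $\Rightarrow$ (3) $\Rightarrow$ (4) $\Rightarrow$ (1), (3) $\Rightarrow$ (5) $\Rightarrow$ (1), and finally the hard direction (1) $\Rightarrow$ (2). The first three implications are essentially definitional: a bi-Lipschitz embedding is automatically an injective coarse embedding (take $\rho_\pm$ linear in Definition \ref{cemb}), and every coarse embedding is uniformly expansive via the upper bound $d_Y(f(x_1),f(x_2))\leq \rho_+(d_X(x_1,x_2))$.

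For (4) $\Rightarrow$ (1), a coarse embedding $f:F_2\to X$ restricts to a coarse equivalence onto its image $Y=f(F_2)$; since $F_2$ is non-amenable (hence paradoxical) and paradoxicality is invariant under coarse equivalence between bounded geometry spaces (as the authors note just after Definition \ref{para dec}), the subspace $Y$ is paradoxical and $X$ is not supramenable. For (5) $\Rightarrow$ (1), given an injective uniformly expansive $f:F_2\to X$ and a paradoxical decomposition $F_2=A_+\sqcup A_-$ with partial translations $t_\pm$ of displacement at most some $R$, injectivity of $f$ yields $f(F_2)=f(A_+)\sqcup f(A_-)$, and the conjugates $s_\pm := f\circ t_\pm\circ f^{-1}$ are bijections from $f(F_2)$ onto $f(A_\pm)$ whose displacements in $X$ are bounded by $\sup\{d_X(f(x),f(y)) : d_{F_2}(x,y)\leq R\}$, which is finite by uniform expansivity. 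Either way $X$ contains a paradoxical subspace, so it is not supramenable.

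The main obstacle is (1) $\Rightarrow$ (2). Given a paradoxical subspace $A\subseteq X$ with decomposition $A=A_+\sqcup A_-$ and partial translations $t_\pm$ of displacement at most $R$, one must realise the $4$-regular Cayley graph of $F_2$ bi-Lipschitzly inside $A$. My strategy would be to recursively apply $t_\pm$ (and, where legitimate, their inverses $t_\pm^{-1}$ defined on $A_\pm$) starting from a basepoint $x_0\in A$, to generate a $4$-valent combinatorial tree inside $A$; the disjointness $A_+\cap A_-=\varnothing$ together with injectivity of the $t_\pm$ supplies enough rigidity for the assignment $F_2\to A$ to be injective, and the upper Lipschitz bound by $R$ is automatic from the displacement condition. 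The technical heart, and the true obstacle, is the \emph{lower} Lipschitz bound: reduced $F_2$-words of length $n$ must be realised by points at distance $\gtrsim n$ in $X$. Following the template of \cite[Proposition~3.4]{MR3158244} for discrete groups, I would extract this separation via a Hall's marriage-type combinatorial argument, iteratively refining the paradoxical decomposition down the tree so that separation between reduced words propagates across levels; alternatively one may invoke a metric-space version of Whyte's theorem asserting that any non-amenable uniformly discrete bounded geometry space contains a bi-Lipschitz embedded regular tree of degree $\geq 3$, into which $F_2$ bi-Lipschitz embeds. The verification that the group-theoretic argument of \cite{MR3158244} only uses the partial-translation structure, and hence extends verbatim to general bounded geometry metric spaces, is the main thing to check carefully.
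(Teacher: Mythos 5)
Your treatment of the easy implications coincides with the paper's: $(2)\Rightarrow(3)\Rightarrow(4)$ and $(3)\Rightarrow(5)$ are definitional, $(4)\Rightarrow(1)$ uses that a coarse embedding is a coarse equivalence onto its image together with coarse invariance of (supr)amenability/paradoxicality, and your $(5)\Rightarrow(1)$ is exactly the paper's argument: conjugate the partial translations of a paradoxical decomposition of $F_2$ by $f$, using injectivity to make $\sigma_\pm$ well defined and uniform expansivity to bound their displacement.

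The divergence is in $(1)\Rightarrow(2)$, and there your primary route has a genuine gap. Iterating $t_\pm$ from a basepoint does give an \emph{injective} Lipschitz image of the free semigroup tree (a ping-pong argument using $A_+\cap A_-=\varnothing$ and injectivity of $t_\pm$), but, as you yourself flag, nothing in this construction prevents two long reduced words from landing at metrically close points of $X$; the lower Lipschitz (or even coarse) bound is precisely the content of Whyte's theorem, whose proof via Hall's marriage theorem is a substantial piece of work, not a routine ``verify the group argument only uses partial translations'' check. (Also note that using $t_\pm^{-1}$ to produce a genuine $4$-valent $F_2$-orbit runs into domain problems, since $t_\pm^{-1}$ is only defined on $A_\pm$; this is repairable because $F_2$ bi-Lipschitz embeds into the rooted binary tree, but it does not help with the lower bound.) Your fallback, however, is exactly what the paper does: it proves $(1)\Rightarrow(2)$ by a one-line citation of \cite[Theorem~6.2]{MR1700742} (Whyte), applied to a non-empty paradoxical, hence non-amenable, subspace $A\subseteq X$. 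There is no need to develop a ``metric-space version'': Whyte's theorem is already stated for uniformly discrete bounded geometry spaces, which is precisely why the uniform discreteness hypothesis appears in the proposition (and why the paper's subsequent remark adjusts condition (2) if that hypothesis is dropped). The same is true of the group-case precedent \cite[Proposition~3.4]{MR3158244}, whose hard direction likewise rests on Whyte rather than on a self-contained marriage argument. So: your proposal is correct provided you take the citation route; if you insist on the hands-on tree construction, you would in effect be reproving Whyte's theorem, and that part is not supplied.
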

\begin{proof}
$(1)\Rightarrow (2)$: This follows immediately from \cite[Theorem~6.2]{MR1700742}.\\
$(2)\Rightarrow (3)$:  Every bi-Lipschitz embedding is an injective coarse embedding.\\
$(3)\Rightarrow (4)$: Obvious.\\
$(4)\Rightarrow (1)$: A coarse embedding is a coarse equivalence onto its image.  As noted above, supramenability is invariant under coarse equivalences, so this follows from the fact that $F_2$ is not (supr)amenable.\\
$(3)\Rightarrow (5)$: Every coarse embedding is uniformly expansive.\\
$(5)\Rightarrow (1)$: Let $f:F_2\ra X$ be an injective uniformly expansive map. Since $F_2$ is paradoxical, there exists a decomposition $F_2=A_1\sqcup A_2$ and partial translations $t_i: F_2\ra A_i$ for $i=1,2$. It suffices to show that the non-empty subspace $f(F_2)$ of $X$ is paradoxical.

Since $f$ is injective, it is clear that $f(F_2)=f(A_1\sqcup A_2)=f(A_1)\sqcup f(A_2)$. For $i\in \{1,2\}$, we define $\sigma_i:f(F_2)\ra f(A_i)$ by $\sigma_i(f(x)):=f(t_i(x))$ for $x\in F_2$. It is well-defined by the injectivity of $f$. Moreover, $\sigma_1$ and $\sigma_2$ are clearly bijective and there exist $R_1,R_2>0$ such that $\sup_{x\in F_2}d(x,t_i(x))\leq R_i$. Since $f$ is uniformly expansive, there exist $S_1,S_2>0$ such that $d_X(f(x),f(x'))\leq S_i$ for all $x,x'\in F_2$ satisfying $d(x,x')\leq R_i$. Hence, $d_X(f(x),\sigma_i(f(x)))=d_X(f(x),f(t_i(x)))\leq S_i$ for all $x\in F_2$, which implies that $\sigma_1$ and $\sigma_2$ are partial translations.
\end{proof}

\begin{rem}
The assumption of uniform discreteness in the above is not important: if it were dropped, the same result would hold with (2) replaced by `there is an injective quasi-isometry of the free group $F_2$ into $X$' and the remaining conditions left the same.
\end{rem}

\section{Asymptotic dimension one and $K$-theory}\label{asdim1 sec}

In this section, we turn our attention to $K$-theoretic questions.  Uniform Roe algebras obviously contain many projections coming from the diagonal inclusion $\delta: \ell^\infty(X)\to C^*_u(X)$.  It is natural to ask if this inclusion `sees' all of the $K_0$-group: precisely, under what conditions is the induced map on $K_0$-groups
\begin{equation}\label{kdiag}
\delta_*:K_0(\ell^\infty(X))\to K_0(C^*_u(X))
\end{equation}
surjective?  For us, this question was inspired by work of Elliott and Sierakowski \cite[Section 5]{MR3508496}, who show that surjectivity of this sort holds when $X$ is the underlying metric space of a non-abelian finitely generated free group: their argument uses the Pimsner-Voiculescu exact sequence for free group actions, and we wanted to know if there was some geometric structure underpinning this.  

First, we recall the definition of asymptotic dimension one from Definition \ref{asdim} in the introduction.

\begin{defi}\label{asdim1}
A collection $\mathcal{U}$ of subsets of a metric space is \emph{$r$-separated} if for all distinct $A,B$ in $\mathcal{U}$ we have that $d(A,B)>r$.  A bounded geometry metric space has \emph{asymptotic dimension at most one} if for any $r>0$ there exists a decomposition $X=U\bigsqcup V$, where each of $U$, $V$ in turn decomposes into a uniformly bounded collection of $r$-separated subsets.
\end{defi}

The main result of this section is then as follows.

\begin{thm}\label{delta_sur}
Let $X$ be a metric space with bounded geometry and let $\delta:{\ell}^\infty(X)\ra C_u^*(X)$ be the inclusion of the diagonal. If the asymptotic dimension of $X$ is at most one, then the induced map
\begin{align*}
\delta_*: K_0({\ell}^\infty(X))\ra K_0(C_u^*(X))
\end{align*}
is surjective.
\end{thm}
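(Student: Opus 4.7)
Given a class $[p] \in K_0(C^*_u(X))$ represented by a projection $p \in M_n(C^*_u(X))$, the plan is to show $[p]$ lies in the image of $\delta_*$. First I would approximate $p$ by a self-adjoint, finite-propagation element $a \in M_n(\C_u[X])$ with $\|p - a\| < \epsilon$ for suitably small $\epsilon$, and let $r = \mathrm{prop}(a)$. Then I would invoke the asymptotic-dimension-one hypothesis at a scale $R \geq r$ to decompose $X = U \sqcup V$ with $U = \bigsqcup_i U_i$ and $V = \bigsqcup_j V_j$ uniformly bounded $R$-separated families. This exposes two block-diagonal subalgebras $A_U := \prod_i M_n(\mathcal{B}(\ell^2(U_i)))$ and $A_V := \prod_j M_n(\mathcal{B}(\ell^2(V_j)))$ sitting inside $M_n(C^*_u(X))$, each a direct product of matrix algebras of uniformly bounded size.

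Any projection in $A_U$ is determined up to Murray--von Neumann equivalence by its rank function $x \mapsto \mathrm{rank}(p(x))$, which is a bounded $\N$-valued function on $X$ and can be realized as a diagonal projection; hence $K_0(A_U)$ and $K_0(A_V)$ both lie in $\delta_*(K_0(\ell^\infty(X)))$. The task then reduces to exhibiting orthogonal projections $q_U \in A_U$ and $q_V \in A_V$ with $[p] = [q_U] + [q_V]$ in $K_0(C^*_u(X))$.

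The main obstacle is exactly this final reduction. Although the block-diagonal cutdowns $1_U a 1_U \in A_U$ and $1_V a 1_V \in A_V$ do live in the respective subalgebras, the cross terms $1_U a 1_V$ and $1_V a 1_U$ have propagation at most $r$ but are not small in norm, so $p$ is not close to an element of $A_U \oplus A_V$ and one cannot simply cut down. The plan is to exploit that these cross terms, by bounded geometry, decompose into finitely many partial-translation operators of controlled propagation, and to use them as building blocks for a Murray--von Neumann equivalence (possibly after stabilization) between $p$ and $q_U \oplus q_V$ inside $C^*_u(X)$. The restriction to asymptotic dimension one is essential here: with only two colors, the bipartite combinatorics of close pairs $(U_i, V_j)$ remain manageable, whereas in higher dimensions such an argument must break down, consistent with the authors' later result that $\delta_*$ can fail to be surjective in higher dimensions.
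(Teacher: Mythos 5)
Your setup matches the paper's: approximate $p$ by a finite-propagation quasi-projection, invoke asymptotic dimension one to get a two-coloured decomposition $X=U\sqcup V$ into uniformly bounded, well-separated families, and observe that the block-diagonal algebras $\prod_i \mathcal{B}(\ell^2(U_i))$, $\prod_j \mathcal{B}(\ell^2(V_j))$ are products of uniformly bounded matrix algebras whose $K_0$-classes are realized by diagonal projections, hence lie in the image of $\delta_*$. But the step you defer -- producing $q_U$ and $q_V$ with $[p]=[q_U]+[q_V]$ -- is the entire content of the theorem, and the ``plan'' of assembling a Murray--von Neumann equivalence out of the cross terms $1_Uа1_V$, $1_Va1_U$ viewed as partial translations is not an argument: you identify no mechanism that forces such an equivalence to exist, no obstruction whose vanishing you could check, and no reason the bipartite combinatorics ``remain manageable.'' Note that the honest obstruction here is $K_1$-theoretic, not a matter of rearranging partial isometries by hand.

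What the paper actually does at this point is run a \emph{controlled} Mayer--Vietoris argument (Proposition \ref{con mv}, from \cite{Guentner:2014bh}). One filters $C^*_u(X)\otimes\mathcal{K}$ by the subspaces $B(U)_r+B(V)_r+B(U\cap V)_r$ built from the coloured decomposition, checks that the resulting pairs of ideals form a uniformly excisive family (Lemma \ref{uni ex lem}), and then the boundary of the class of the quasi-projection lands in the controlled $K_1$-group of the intersection algebra $B(U\cap V)_{r_1}$. The crucial quantifier order is that the separation scale of the decomposition is chosen \emph{after} the constants $r_1,r_2$ produced by the controlled Mayer--Vietoris for the given propagation $r_0$ (one needs separation $>3r_2$, not merely $\geq \mathrm{prop}(a)$); with that choice the intersection algebra is stably a direct product of uniformly bounded matrix algebras, so its $K_1$ vanishes, the boundary obstruction dies, and the class splits as $y+z$ with $y,z$ coming from the two block algebras -- which is exactly your missing $[q_U]+[q_V]$. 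An ordinary (non-quantitative) Mayer--Vietoris does not substitute, because without propagation control the relevant intersection ideal is no longer a bounded product of matrix algebras and its $K_1$ need not vanish. So as written your proposal has a genuine gap at the central step; filling it essentially requires the controlled $K$-theory machinery (or an equivalent quantitative argument), not just the combinatorial bookkeeping you sketch.
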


Philosophically, this can be compared to the fact that if $Y$ is a compact topological space with covering dimension at most one, then $K_0(C(Y))$ is generated by projections in $C(Y)$, in other words by `zero-dimensional information'.  There is indeed a sense in which the image of $\delta_*$ is the `zero-dimensional part' of $K_0(C^*_u(X))$; however, making this precise would involve a detour through the uniform coarse Baum-Connes conjecture, which would take us a little far afield.  In the next section, we will show that $\delta_*$ fails to be surjective for many spaces of asymptotic dimension two or higher, so the above is the best possible general result with a hypothesis in terms of the asymptotic dimension on $X$.  This is analogous to the fact that for many compact spaces $Y$ with covering dimension at least $2$, the $K_0$-group $K_0(C(Y))$ is not generated by projections from $C(Y)$. 

The proof of Theorem \ref{delta_sur} uses a controlled $K$-theory argument that we will come back to later; before doing this, we will study $\delta$ in a more elementary way, and deduce a corollary.   The corollary is again motivated by work of Eliott and Sierakowski: they  point out that surjectivity of $\delta_*$ is particularly interesting when $X$ is a non-amenable group, as in this case one can show that $\delta_*=0$, and thus $K_0(C^*_u(X))=0$.  

The next two lemmas show $\delta_*$ as in line \eqref{kdiag} is zero whenever the space $X$ is non-amenable.  This is a generalization of \cite[Lemma~5.4]{MR3508496}, and has a similar proof.

\begin{lem}\label{full triv}
If $X$ is a paradoxical bounded geometry metric space, then every projection in ${\ell}^\infty(X)$ that is full in $C_u^*(X)$ has the trivial $K_0$-class in $K_0(C_u^*(X))$.
\end{lem}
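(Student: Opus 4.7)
The plan is to reduce the claim to Theorem~\ref{p.i-para} via the observation that a full projection in $\ell^\infty(X)$ corresponds to a subset of $X$ coarsely equivalent to $X$. Write $p = 1_A$, where $A = \{x \in X : p(x) = 1\}$. The first step is to establish the geometric characterization that $1_A$ is full in $C_u^*(X)$ if and only if $A$ is a net in $X$, i.e.\ $\sup_{x \in X} d(x, A) < \infty$. For the easy direction, if $N_r(A) = X$ then every matrix unit $e_{xy} \in \C_u[X]$ factors as $e_{xa} \cdot 1_A \cdot e_{ay}$ for some $a \in A$ with $d(x, a) \leq r$, so the closed ideal generated by $1_A$ contains the dense $*$-subalgebra $\C_u[X]$. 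For the converse, if $A$ fails to be a net then for any finite-propagation approximation $\sum_i a_i 1_A b_i$ (with all $a_i, b_i$ of propagation at most $r$), its support is contained in $N_r(A) \times N_r(A)$; choosing $x \in X$ with $d(x, A) > r$ we see that this approximation kills $\delta_x$, so it cannot be within norm $1$ of the identity.

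Given this characterization, the rest of the argument is almost immediate. Since $A \subseteq X$ is a net, the inclusion $A \hookrightarrow X$ is a coarse equivalence. Paradoxicality is a coarse invariant between bounded geometry spaces, as the paper records immediately after Definition~\ref{para dec}, so $A$ is itself paradoxical. Applying Theorem~\ref{p.i-para} to the bounded geometry space $A$ then yields $[1_{C_u^*(A)}]_0 = 0$ in $K_0(C_u^*(A))$.

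Finally, I would transport this vanishing to $K_0(C_u^*(X))$ via the canonical identification $C_u^*(A) \cong 1_A C_u^*(X) 1_A$ as the hereditary $C^*$-subalgebra of $C_u^*(X)$ cut down by $1_A$. Since $1_A$ is full, this is a full corner of $C_u^*(X)$, and by the results of Brown-Green-Rieffel (as cited in the paper via \cite{Exel:1993pt}) the inclusion induces an isomorphism $K_0(C_u^*(A)) \xrightarrow{\cong} K_0(C_u^*(X))$ that sends $[1_A]$ to $[1_A]$. Consequently $[p]_0 = [1_A]_0 = 0$ in $K_0(C_u^*(X))$, as required. The main thing to verify with care is the first step, the geometric characterization of fullness in terms of the net condition; once that is in place, the remainder is a routine assembly of results already present in the paper.
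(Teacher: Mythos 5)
Your proposal is correct, but it reaches the crucial intermediate fact by a genuinely different route from the paper. The paper never characterizes fullness geometrically: it notes that paradoxicality of $X$ makes $1_X$ properly infinite (Theorem \ref{p.i-para}), observes that a full $p=1_A$ generates the same closed two-sided ideal as $1_X$, and invokes Corollary \ref{generate ideal} (a Cuntz-comparison argument) to conclude that $p$, and hence the corner $C^*_u(A)=pC^*_u(X)p$, is properly infinite; from that point on the two proofs coincide, applying Theorem \ref{p.i-para} to $A$ and pushing $[1_A]_0=0$ forward along the inclusion $C^*_u(A)\to C^*_u(X)$. You instead show that fullness of $1_A$ forces $A$ to be coarsely dense, hence coarsely equivalent to $X$, and then use the coarse invariance of paradoxicality recorded after Definition \ref{para dec}; this bypasses Corollary \ref{generate ideal} entirely and isolates a characterization of fullness of diagonal projections (full if and only if $A$ is a net) which has some independent interest. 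Two small caveats. First, the direction you actually need, ``full implies net,'' is the one you prove correctly via the support/propagation argument; but your justification of the (unused) converse is flawed as stated: the matrix units $e_{xy}$ only span the finitely supported operators, which are \emph{not} dense in $C^*_u(X)$, so knowing the ideal contains every $e_{xy}$ does not yet give $\C_u[X]$. The standard fix: choose $\alpha:X\to A$ with $d(x,\alpha(x))$ uniformly bounded, use bounded geometry to split $X=X_1\sqcup\cdots\sqcup X_N$ with $\alpha$ injective on each piece, and let $v_i\in \C_u[X]$ be the partial isometry $\delta_x\mapsto \delta_{\alpha(x)}$ for $x\in X_i$ (and $0$ elsewhere); then $1=\sum_{i=1}^N v_i^*1_Av_i$ exhibits $1$ in the ideal. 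Second, the appeal to Brown--Green--Rieffel/full corners via \cite{Exel:1993pt} is more than you need: plain functoriality of $K_0$ under the non-unital inclusion $C^*_u(A)\to C^*_u(X)$ already sends $[1_A]_0=0$ to $[p]_0$, which is exactly how the paper finishes.
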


\begin{proof}
Suppose that $X$ is paradoxical and let $p=1_A$ be any projection in ${\ell}^\infty(X)$ that is full in $C^*_u(X)$. Then the unit $1_X$ in $C^*_u(X)$ is properly infinite and the projections $p$ and $1_X$ generate the same closed two-sided ideal. Hence, $p$ is also properly infinite by Corollary \ref{generate ideal}. Since $C^*_u(A)=pC^*_u(X)p$ is a properly infinite $C^*$-algebra, we have that $[1_A]_0=[0]_0$ in $K_0(C^*_u(A))$ by Theorem \ref{p.i-para}. Since the inclusion map from $C^*_u(A)$ to $C^*_u(X)$ maps the unit $1_A$ in $C^*_u(A)$ to the diagonal projection $1_A$ in $C^*_u(X)$, we conclude that $[p]=[1_A]=[0]$ in $K_0(C^*_u(X))$.
\end{proof}

\begin{lem}\label{para lem}
If $X$ is a paradoxical bounded geometry metric space, then the map
$$
\delta_*:K_0(\ell^\infty(X))\to K_0(C^*_u(X))
$$ 
induced by the diagonal inclusion is zero.
\end{lem}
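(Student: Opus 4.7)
The plan is to deduce the vanishing of $\delta_*$ from Lemma \ref{full triv} by passing to the doubled space $Y := X \sqcup X$, equipped with the sum metric $d((x,i),(y,j)) := d_X(x,y) + (1-\delta_{ij})$. First I would note that $K_0(\ell^\infty(X))$ is generated as an abelian group by the classes of the diagonal projections $\{[1_A] : A \subseteq X\}$; this is because $\ell^\infty(X)$ is a commutative von Neumann algebra with totally disconnected spectrum $\beta X$, so every projection in $M_n(\ell^\infty(X))$ is Murray--von Neumann equivalent to a diagonal one of the form $\mathrm{diag}(1_{A_1}, \ldots, 1_{A_n})$. It therefore suffices to verify that $[1_A]_0 = 0$ in $K_0(C^*_u(X))$ for every subset $A \subseteq X$.

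To establish this, I would fix $A$ and prepare to apply Lemma \ref{full triv} to the space $Y$. Note that $Y$ is coarsely equivalent to $X$ via the obvious forgetful map, hence paradoxical by the coarse invariance of paradoxicality recalled earlier in the section. Under the canonical identification $C^*_u(Y) \cong M_2(C^*_u(X))$ coming from the splitting $\ell^2(Y) \cong \ell^2(X_1) \oplus \ell^2(X_2)$, the diagonal projection $1_A \oplus 1_X \in M_2(\ell^\infty(X))$ corresponds to the characteristic function $1_{A \sqcup X_2} \in \ell^\infty(Y)$, with $A$ viewed as sitting inside the first copy $X_1$.

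The core step is to observe that $1_{A \sqcup X_2}$ is full in $C^*_u(Y)$: the ``swap'' operator $w \in C^*_u(Y)$ defined by $w\delta_{(x,1)} := \delta_{(x,2)}$ and $w\delta_{(x,2)} := 0$ has propagation one, with $w^*w = 1_{X_1}$ and $ww^* = 1_{X_2}$, so $1_{X_1}$ lies in the two-sided ideal generated by $1_{X_2}$, and hence the ideal generated by $1_{A \sqcup X_2} \geq 1_{X_2}$ contains $1_{X_1} + 1_{X_2} = 1_Y$. Lemma \ref{full triv} applied to the paradoxical space $Y$ then yields $[1_{A \sqcup X_2}]_0 = 0$ in $K_0(C^*_u(Y))$.

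Transferring this equality back through the isomorphisms $K_0(C^*_u(Y)) \cong K_0(M_2(C^*_u(X))) \cong K_0(C^*_u(X))$ gives $[1_A]_0 + [1_X]_0 = 0$, and since $[1_X]_0 = 0$ by Theorem \ref{p.i-para}, we conclude $[1_A]_0 = 0$ as desired. The most delicate ingredient is the initial reduction to showing $[1_A]_0 = 0$ for individual characteristic functions, which rests on the standard diagonalisation of projections in matrix algebras over the commutative von Neumann algebra $\ell^\infty(X)$; once this is in place, the remainder is essentially bookkeeping around the doubling trick.
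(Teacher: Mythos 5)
Your proof is correct, but it reaches the conclusion by a genuinely different route than the paper. The paper makes the same first reduction (via the dimension isomorphism $K_0(C(\beta X))\cong C(\beta X,\Z)$, which is the diagonalisation fact you invoke), but then works entirely inside $C^*_u(X)$: writing $X=X_1\sqcup X_2$ with partial translations $t_i\colon X\to X_i$, it notes that $1_{X_1}$ and $1_{X_2}$ are full, hence so are the complements $1_X-1_{A\cap X_1}\geq 1_{X_2}$ and $1_X-1_{A\cap X_2}\geq 1_{X_1}$, and then applies Lemma \ref{full triv} to $[1_X]$, $[1_X-1_{A\cap X_1}]$ and $[1_X-1_{A\cap X_2}]$ to conclude $[1_A]=[1_{A\cap X_1}]+[1_{A\cap X_2}]=0$. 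You instead make the projection full by padding: you pass to the doubled space $Y=X\sqcup X$ and use the identification $C^*_u(Y)\cong M_2(C^*_u(X))$ (the same device as in the proof of Proposition \ref{Para_subset}), observe that $1_{A\sqcup X_2}$ is full via the swap partial isometry, use coarse invariance of paradoxicality to see that $Y$ is paradoxical, apply Lemma \ref{full triv} once, and finish with $[1_X]_0=0$ from Theorem \ref{p.i-para}. Both arguments hinge on the same key lemma; yours trades the explicit use of the paradoxical decomposition of $X$ for the doubling/matrix identification plus coarse invariance, while the paper's version stays inside $C^*_u(X)$ and needs no auxiliary space. The steps you need all check out: $Y$ has bounded geometry, the forgetful map $Y\to X$ is a coarse equivalence so $Y$ is paradoxical, the swap isometry has propagation $1$ and witnesses fullness of $1_{A\sqcup X_2}$, and the reduction to diagonal projections over $\ell^\infty(X)$ is valid since pointwise-chosen partial isometries are automatically bounded.
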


\begin{proof}
By assumption $X$ admits a partition $X=X_1\sqcup X_2$ such that there exist partial translations $t_i:X\ra X_i$ for $i\in \{1,2\}$. Let $v_i$ be the partial isometry associated to the partial translation $t_i$, so $v_i$ satisfies $v_i^*v_i=1_X$ and $v_iv_i^*=1_{X_i}$ for $i\in \{1,2\}$. Since $1_X=v_i^*1_{X_i}v_i$, we conclude that both $1_{X_1}$ and $1_{X_2}$ are full projections in $C_u^*(X)$.

Let $p$ be any projection in $\ell^{\infty}(X)$ of the form $1_A$ for some subspace $A$ of $X$. In order to show $[p]_0=[0]_0$ in $K_0(C_u^*(X))$, we show that $p$ is a sum of two orthogonal projections with a full complement. It is clear that $p$ is an orthogonal sum of $1_{A\cap X_1}$ and $1_{A\cap X_2}$. Since
\begin{align*}
&& 1_{X_1}=1_X-1_{X_2}\leq 1_X-1_{A\cap X_2}&&\text{and} && 1_{X_2}=1_X-1_{X_1}\leq 1_X-1_{A\cap X_1},&&
\end{align*} 
we conclude that both $1_X-1_{A\cap X_1}$ and $1_X-1_{A\cap X_2}$ are full projections in $C_u^*(X)$. It follows from Lemma~\ref{full triv} that $[1_X-1_{A\cap X_1}]$, $[1_X-1_{A\cap X_2}]$ and $[1_X]$ equal the trivial element of $K_0(C_u^*(X))$. It follows that
\begin{align*}
[p]=[1_{A\cap X_1}]+[1_{A\cap X_2}]=[0]+[0]=[0].
\end{align*}

In order to show that the induced map $\delta_*: K_0({\ell}^\infty(X))\ra K_0(C_u^*(X))$ is zero, we note that there is a group isomorphism
\begin{align*}
\text{dim}: K_0(C(\beta X))\ra C(\beta X,\Z)
\end{align*}
which satisfies that $\text{dim}([p])(x)=\text{Tr}(p(x))$, where $x\in \beta X$, $p$ is a projection in $M_n(C(\beta X))$ for some $n\in \N$ and $\text{Tr}$ denotes the standard non-normalized trace on $M_n(\C)$. In particular, every projection in $M_n(\ell^\infty(X))$ is equivalent to a direct sum of projections from $\ell^\infty(X)$. Since every projection in $\ell^\infty(X)$ defines the trivial $K_0$-class in $K_0(C_u^*(X))$, we see that $\delta_*$ is indeed the zero map.
\end{proof}

As an immediate corollary of Theorem \ref{delta_sur} and Lemma \ref{para lem} we get the following result, which generalizes a result of Elliott and Sierakowski on vanishing of $K_0(C^*_u(X))$ when $X$ is the metric space underlying a finitely generated non-abelian free group.

\begin{cor}\label{van cor}
Let $X$ be a non-amenable bounded geometry metric space with asymptotic dimension (at most) one.  Then $K_0(C^*_u(X))=0$. \qed
\end{cor}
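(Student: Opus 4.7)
The plan is to combine the two main results of the section directly, observing that both hypotheses on $X$ are in play simultaneously: asymptotic dimension one controls the image of the diagonal, while non-amenability forces that image to vanish.

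First I would invoke the equivalence between non-amenability and paradoxicality for bounded geometry metric spaces (Deuber–Simmons–Winkler, as recorded in the discussion following Definition \ref{para dec}). This converts the hypothesis that $X$ is non-amenable into the statement that $X$ is paradoxical, which is the form required by Lemma \ref{para lem}. Applying that lemma, the induced map
\[
\delta_*: K_0(\ell^\infty(X)) \to K_0(C^*_u(X))
\]
is the zero homomorphism.

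Next I would apply Theorem \ref{delta_sur}, whose hypothesis that $X$ has asymptotic dimension at most one is exactly what is assumed. That theorem says the same map $\delta_*$ is surjective. A surjective zero map forces its codomain to be trivial, so $K_0(C^*_u(X)) = 0$, as required.

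There is no real obstacle here: this is a one-line synthesis of Theorem \ref{delta_sur} (surjectivity of $\delta_*$ under asymptotic dimension at most one) and Lemma \ref{para lem} (vanishing of $\delta_*$ under paradoxicality), together with the standard identification of paradoxicality with non-amenability for bounded geometry spaces. All the genuine content has already been developed earlier in the section.
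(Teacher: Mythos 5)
Your proposal is correct and coincides with the paper's argument: the corollary is stated there as an immediate consequence of Theorem \ref{delta_sur} (surjectivity of $\delta_*$ in asymptotic dimension at most one) and Lemma \ref{para lem} (vanishing of $\delta_*$ for paradoxical, i.e.\ non-amenable, spaces). Nothing further is needed.
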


Note that every metric space with asymptotic dimension zero is necessarily amenable, so this corollary has nothing to say about such spaces. Moreover, if $X$ is a non-amenable finitely \emph{presented} group of asymptotic dimension one, then $X$ is a virtually free group \cite[Theorem 61]{Bell:2008fk}; thus Corollary \ref{van cor} does not really go further than the work of Elliott and Sierakowski in the case of finitely presented groups.   There is however a large zoo of finitely generated groups giving interesting examples: for example, any generalized lamplighter group $\Z_2\wr F_n$, $F_n$ the free group on $n>1$ generators, as follows from \cite[Theorem 63]{Bell:2008fk}.  There are also many examples of spaces to which the theorem applies.

We now turn to the proof of Theorem \ref{delta_sur}.  

This uses some controlled $K$-theory machinery from  \cite{Guentner:2014bh}.  In order to keep the length of the current paper under control, we refer to \cite{Guentner:2014bh} for basic definitions that we do not use directly; see also \cite{Oyono-Oyono:2011fk} for further background on controlled $K$-theory.  To give a very rough idea, recall first that a $C^*$-algebra $A$ is \emph{filtered} if it is equipped with a collection of self-adjoint subspaces $(A_r)_{r\geq 0}$ (sometimes also assumed closed, but this is not necessary for us) indexed by the positive real numbers such that:
\begin{enumerate}
\item $A_r\subseteq A_s$ for $s\leq r$; 
\item $A_r\cdot A_s\subseteq A_{r+s}$ for all $r,s$; and 
\item $\bigcup_{r\geq 0}A_r$ is dense in $A$.
\end{enumerate}
A motivating example occurs when $A$ is the uniform Roe algebra of some space filtered by the notion of propagation.   If $A$ is a filtered $C^*$-algebra, the idea of the controlled $K$-theory group $K_0^{r, \epsilon}(A)$ is to build the $K$-theory group in the usual sort of way, but only allowing projections and homotopies from matrices over $A_r$; as matrices over $A_r$ may contain very few projections, to make good sense of this one actually allows projections-up-to-$\epsilon$-error.  The group $K_1^{r,\epsilon}$ is defined similarly, but working with unitaries-up-to-$\epsilon$-error.  Following  \cite{Guentner:2014bh}, we usually just fix $\epsilon=1/8$ in what follows and use the words \emph{quasi-projection} or \emph{quasi-unitary} for operators that are projections or unitaries up to $1/8$-error in the appropriate sense.

The following comes from \cite[Definitions 7.2 and 7.3]{Guentner:2014bh}.   

\begin{defi}\label{uni ex}
Let $\mathcal{K}=\mathcal{K}(\ell^2(\N))$, and $A$ be a $C^*$-algebra.  Let $A\otimes \mathcal{K}$ denote the spatial tensor product of $A$ and $\mathcal{K}$; using the canonical orthonormal basis on $\ell^2(\N)$, we identify elements of $A\otimes \mathcal{K}$ with $\N$-by-$\N$ matrices with entries from $A$.  For a subspace $S$ of $A$, let $S\otimes \mathcal{K}$ denote the subspace of $A\otimes \mathcal{K}$ consisting of matrices with all elements in $S$.  Note that a filtration on $A$ induces a filtration on $A\otimes \mathcal{K}$ by defining $(A\otimes \mathcal{K})_r:=A_r\otimes \mathcal{K}$.

Let $(A^\omega)_{\omega\in \Omega}$ be a collection of filtered $C^*$-algebras, and filter each stabilization $A^\omega\otimes\mathcal{K}$ as above.  For each $\omega\in \Omega$, let $I^\omega$ and $J^\omega$ be $C^*$-ideals in $A^\omega$, which we assume are filtered by the subspaces $I^\omega_r:=A^\omega_r\cap I^\omega$ and $J^\omega_r:=A^\omega_r\cap J^\omega$.  Equip the stabilizations $I^\omega\otimes \mathcal{K}$ and $J^\omega\otimes \mathcal{K}$ with the filtrations defined above.

The collection $(I^\omega,J^\omega)_{\omega\in \Omega}$ of pairs of ideals is \emph{uniformly excisive} if for any $r_0,m_0\geq 0$ and $\epsilon>0$, there are $r\geq r_0$, $m\geq 0$, and $\delta>0$ such that:
\begin{enumerate}[(i)]
\item \label{uni ex i} for any $\omega\in \Omega$ and any $a\in (A^\omega\otimes \mathcal{K})_{r_0}$ of norm at most $m_0$, there exist elements $b\in (I^\omega\otimes \mathcal{K})_r$ and $c\in (J^\omega\otimes \mathcal{K})_r$ of norm at most $m$ such that $\|a-(b+c)\|<\epsilon$;
\item \label{uni ex ii} for any $\omega\in \Omega$ and any $a\in I^\omega\otimes \mathcal{K}\cap J^\omega\otimes \mathcal{K}$ such that 
$$
d(a,(I^{\omega}\otimes \mathcal{K})_{r_0})<\delta \quad \text{and} \quad d(a,(J^\omega\otimes\mathcal{K})_{r_0})<\delta
$$ 
there exists $b\in I^\omega_r\otimes \mathcal{K}\cap J^\omega_r\otimes \mathcal{K}$ such that $\|a-b\|<\epsilon$.
\end{enumerate}
\end{defi}

The point of this definition is the existence of the following `Mayer-Vietoris sequence' from \cite[Proposition 7.4]{Guentner:2014bh}.  See \cite[Section 4]{Guentner:2014bh} for conventions on controlled $K$-theory.

\begin{prop}\label{con mv}
Say $(A^\omega)_{\omega\in \Omega}$ is a collection of non-unital filtered $C^*$-algebras and $(I^\omega,J^\omega)_{\omega\in \Omega}$ a uniformly excisive collection of pairs of non-unital ideals.  Then for any $r_0\geq 0$ there are $r_1,r_2\geq r_0$ with the following property.  For each $\omega$, each $i\in \{0,1\}$ and each $x\in K_i^{r_0,1/8}(A^\omega)$ there is an element
$$
\partial_c(x) \in K_{i+1}^{r_1,1/8}(I^\omega\cap J^\omega)
$$
(here `~$i+1$' is to be understood mod $2$) such that if $\partial_c(x)=0$ then there exist 
$$
y\in K_i^{r_2,1/8}(I^\omega) \quad \text{and} \quad z\in K_i^{r_2,1/8}(J^\omega)
$$
such that 
$$
x=y+z \quad \text{in}\quad K_i^{r_2,1/8}(A^\omega)
$$ 
(here we have abused notation, conflating $y$ and $z$ with the images under the natural maps $K_i^{r_2,1/8}(I^\omega)\to  K_i^{r_2,1/8}(A^\omega)$ and $K_i^{r_2,1/8}(J^\omega)\to  K_i^{r_2,1/8}(A^\omega)$; we will frequently abuse notation in this way below).
\end{prop}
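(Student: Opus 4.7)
The plan is to imitate, in the controlled setting, the boundary construction attached to the Mayer--Vietoris short exact sequence $0 \to I^\omega\cap J^\omega \to I^\omega\oplus J^\omega \to I^\omega + J^\omega \to 0$, using uniform excisiveness to keep all propagation and norm bounds uniform in $\omega$. Throughout I fix the slack $\epsilon = 1/8$ (and occasionally work with smaller $\epsilon$ that can be absorbed into $1/8$ at the end).

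\textbf{Construction of $\partial_c$ (case $i = 0$).} Given $x \in K_0^{r_0,1/8}(A^\omega)$ represented by a quasi-projection $p$ of propagation $\leq r_0$ in some $M_n((A^\omega)^+) \subseteq (A^\omega\otimes\mc K)^+$, apply Definition \ref{uni ex}(i) (with a suitably small target accuracy $\epsilon'$) to write $p \approx b + c$ with $b \in (I^\omega\otimes\mc K)_{r}$, $c \in (J^\omega\otimes\mc K)_{r}$ of controlled norm; symmetrizing doubles the propagation at worst, so we may take $b, c$ self-adjoint. Form the unitary $v := e^{2\pi i b}$ in $(I^\omega\otimes \mc K)^+$. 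Because $p$ is a quasi-projection, $e^{2\pi i p}$ is norm-close to $1$; substituting $b \approx p - c$ and expanding the Taylor series, commutator errors being controlled by $\|b\|,\|c\|$ and $\epsilon'$, one finds that $v$ is also norm-close to $e^{-2\pi i c}$, an element of $(J^\omega\otimes \mc K)^+$. Thus $v$ is small-close both to $(I^\omega\otimes \mc K)^+$ and to $(J^\omega\otimes \mc K)^+$, and Definition \ref{uni ex}(ii) promotes $v$ to a genuine quasi-unitary $\tilde v$ in $((I^\omega\cap J^\omega)\otimes \mc K)^+$ at some propagation $r_1$ depending only on $r_0$. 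Set $\partial_c(x) := [\tilde v] \in K_1^{r_1,1/8}(I^\omega\cap J^\omega)$. The case $i = 1$ is parallel: represent $x$ by a quasi-unitary, pass to the Bott-type quasi-projection $\mr{diag}(u,u^*)\,e\,\mr{diag}(u^*,u)$ over a matrix algebra and run the same decompose-and-glue argument, landing in $K_0^{r_1,1/8}(I^\omega\cap J^\omega)$.

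\textbf{Splitting when $\partial_c(x) = 0$.} If $[\tilde v] = 0$ in $K_1^{r_1,1/8}(I^\omega\cap J^\omega)$, then after controlled stabilization $\tilde v$ is controlled-homotopic through quasi-unitaries to the identity, inside $(I^\omega\cap J^\omega)\otimes \mc K$. A standard controlled lift of this homotopy provides a self-adjoint $h \in (I^\omega\cap J^\omega)\otimes \mc K$ (at controlled propagation) with $e^{2\pi i h}$ close to $v$. Replace $b$ by $b' := b - h$, still in $I^\omega\otimes \mc K$ at controlled propagation; now $e^{2\pi i b'} \approx 1$, so the spectrum of $b'$ clusters near $\Z$. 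A continuous spectral cutoff supported near $\{0,1\}$ applied to $b'$, approximated by a polynomial of bounded degree to preserve finite propagation, produces a quasi-projection $p_I$ in $I^\omega\otimes \mc K$ close to $b'$. The complement $p_J := p - p_I$ is then close to $c$, hence close to $J^\omega\otimes \mc K$; one more application of Definition \ref{uni ex}(ii) promotes it to a quasi-projection in $J^\omega\otimes \mc K$. This yields $x = [p_I] + [p_J]$ in $K_0^{r_2,1/8}(A^\omega)$ with the two summands lifting to $K_0^{r_2,1/8}(I^\omega)$ and $K_0^{r_2,1/8}(J^\omega)$ respectively, for an $r_2 \geq r_1$ depending only on $r_0$.

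\textbf{Main obstacle.} Because each application of uniform excisiveness yields output parameters depending only on the input data and not on $\omega$, the resulting $r_1, r_2$ are uniform in $\omega$. The genuine technical difficulty is propagation control under functional calculus: neither $e^{2\pi i b}$ nor the spectral cutoff near $\{0,1\}$ has finite propagation, so both must be replaced by polynomial approximations whose degrees, and thus propagation-inflation factors, are governed by $\|b\|$ and the target accuracy. Pinning down that the homotopy-lifting step of the vanishing case — converting controlled-triviality of $[\tilde v]$ into an honest self-adjoint $h$ of controlled propagation close to the inverse logarithm of $v$ — can be carried out with constants independent of $\omega$ is the place where the bulk of the work lives. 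Once this bookkeeping is arranged, the rest is the same formal manipulation as in the standard Mayer--Vietoris argument.
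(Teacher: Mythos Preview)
The paper does not give its own proof of this proposition: it is quoted verbatim from \cite[Proposition 7.4]{Guentner:2014bh} and used as a black box in the proof of Theorem \ref{delta_sur}. There is therefore no argument in the present paper to compare your sketch against.

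That said, your outline follows the expected shape of the controlled Mayer--Vietoris construction and is broadly in the spirit of the cited reference. One step deserves correction. You assert that $v=e^{2\pi i b}$ is close to $e^{-2\pi i c}$ because ``commutator errors [are] controlled by $\|b\|,\|c\|$ and $\epsilon'$''; but nothing in the setup bounds $[b,c]$, and in general it need not be small (one can perturb any decomposition $p=b+c$ by $b\mapsto b+x$, $c\mapsto c-x$ for self-adjoint $x\in I^\omega\cap J^\omega$, which changes $[b,c]$ arbitrarily). The conclusion you want --- that $v-1$ is close to $J^\omega\otimes\mathcal K$ --- is nonetheless correct, for a simpler reason: since $c\in J^\omega\otimes\mathcal K$, reducing modulo $J^\omega$ gives $b\equiv p$ (up to the approximation error), whence $e^{2\pi i b}\equiv e^{2\pi i p}\approx 1$ there; and $v-1$ already lies in $I^\omega\otimes\mathcal K$ because $b$ does. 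This is exactly the input needed for condition (ii) of Definition \ref{uni ex}. With that fix, and with the polynomial-approximation bookkeeping you flag as the main obstacle carried out carefully, your sketch is a reasonable roadmap to the result proved in \cite{Guentner:2014bh}.
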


Let now $X$ be an arbitrary bounded geometry metric space.  Recall that the \emph{support} of $a\in C^*_u(X)$ is $\text{supp}(a):=\{(x,y)\in X\times X\mid a_{xy}\neq 0\}$ and the \emph{propagation} is $\text{prop}(a):=\sup\{d(x,y)\mid (x,y)\in \text{supp}(a)\}$.  For any subset $U$ of $X$ and $r\geq 0$, let $U^{(r)}$ denote the $r$-neighbourhood of $U$, i.e.\ $U^{(r)}:=\{x\in X\mid d(x,U)\leq r\}$.  We extend these definitions to elements of $C^*_u(X)\otimes \mathcal{K}$ by considering operators in $C^*_u(X)\otimes \mathcal{K}$ as $\N\times \N$ matrices over $C^*_u(X)$, and defining the support of an element $a$ to be the union of the supports of all its matrix entries.  

Let $\Omega$ be the set of all decompositions $X=U\bigsqcup V$, where $U$ and $V$ are subsets of $X$ such that $U$ and $V$ are equipped with decompositions $U=\bigsqcup_{i\in I}U_i$ and $V=\bigsqcup_{j\in J}V_j$ into subsets.   Given such a pair, for each $r\geq 0$, define subspaces of $C^*_u(X)$ by
$$
B(U)_r:=\Big\{a\in C^*_u(X)\otimes \mathcal{K} \mid \text{supp}(a)\subseteq \bigcup_{i\in I} U_i^{(r)}\times U_i^{(r)} \text{ and } \text{prop}(a)\leq r\Big\},
$$
$$
B(V)_r:=\Big\{a\in C^*_u(X)\otimes \mathcal{K} \mid \text{supp}(a)\subseteq \bigcup_{j\in J} V_j^{(r)}\times V_j^{(r)} \text{ and } \text{prop}(a)\leq r\Big\},
$$
and 
$$
B(U\cap V)_r:=\Big\{T\in C^*_u(X)\otimes \mathcal{K} \mid \text{supp}(a)\subseteq \bigcup_{i\in I,j\in J} (U_i^{(r)}\cap V_j^{(r)})\times (U_i^{(r)}\cap V_j^{(r)})\Big\}
$$
(note that there is no propagation control on elements of $B(U\cap V)_r$).  Define further
$$
A^\omega_r:=B(U)_r+B(V)_r+B(U\cap V)_r
$$
and 
$$
I^\omega_r:=B(U)_r+B(U\cap V)_r, \quad J^\omega_r:=B(V)_r+B(U\cap V)_r
$$
Define $A^\omega:=\overline{\bigcup_{r\geq 0}A^\omega_r}$, $I^\omega:=\overline{\bigcup_{r\geq 0}I^\omega_r}$ and $J^\omega:=\overline{\bigcup_{r\geq 0}J^\omega_r}$.   

\begin{lem}\label{id filt}
Equipped with the structures above, each $A^\omega$ is a $C^*$-algebra filtered by the subspaces $A^\omega_r$.  Moreover, $I^\omega$ and $J^\omega$ are ideals in $A^\omega$, filtered by the subspaces $I^\omega_r$ and $J^\omega_r$ respectively.
\end{lem}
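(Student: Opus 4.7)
The plan is to verify, in order, that each $A^\omega$ is a $C^*$-algebra with $(A^\omega_r)_{r\geq 0}$ a filtration, and then that $I^\omega$ and $J^\omega$ are two-sided ideals with the asserted filtrations. I would handle the filtration axioms directly from the definitions, and then deduce the ideal property as a consequence of the same product computations.

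Self-adjointness of each $A^\omega_r$ is immediate, since the support conditions defining $B(U)_r$, $B(V)_r$, $B(U\cap V)_r$ are symmetric in the two coordinates and propagation is symmetric under taking adjoints. Monotonicity $A^\omega_r \subseteq A^\omega_s$ for $r\leq s$ follows at once from $U_i^{(r)}\subseteq U_i^{(s)}$ (and likewise for $V_j$) together with the propagation bound relaxing as $r$ grows. The density of $\bigcup_r A^\omega_r$ in $A^\omega$ is built into the definition.

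The main step is the multiplicative closure $A^\omega_r\cdot A^\omega_s \subseteq A^\omega_{r+s}$, which expands into nine pairwise products of building blocks. All are handled by the same triangle-inequality bookkeeping: if $(x,z)\in\mathrm{supp}(ab)$ via an intermediate witness $y$, I would track where $x$ and $z$ sit relative to whichever neighbourhoods of $U_i$ and $V_j$ contain $y$, using the propagation bounds of $a$ and $b$ wherever they are available. For instance, $B(U)_r\cdot B(U)_s \subseteq B(U)_{r+s}$ since $x,y\in U_i^{(r)}$ and $y,z\in U_{i'}^{(s)}$ combined with $d(x,y)\leq r$ and $d(y,z)\leq s$ force $x,z\in U_i^{(r+s)}$; and $B(U)_r\cdot B(V)_s \subseteq B(U\cap V)_{r+s}$, because the witness $y$ lies in $U_i^{(r)}\cap V_j^{(s)}$, and then $d(x,V_j),d(z,U_i)\leq r+s$ places both $x$ and $z$ in $U_i^{(r+s)}\cap V_j^{(r+s)}$. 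The mixed products involving $B(U\cap V)$ are treated analogously: the support constraint on $B(U\cap V)_r$ already pins both endpoints of any matrix entry inside a common enlarged block $U_i^{(r)}\cap V_j^{(r)}$, which combines with the propagation of the other factor to place the product in $B(U\cap V)_{r+s}$. Closing under norm then makes $A^\omega$ a $C^*$-subalgebra of $C^*_u(X)\otimes\mathcal{K}$ filtered by the $A^\omega_r$.

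The ideal property for $I^\omega = B(U)+B(U\cap V)$ then follows quickly: the product computations above give $B(V)_s\cdot B(U)_r \subseteq B(U\cap V)_{r+s} \subseteq I^\omega_{r+s}$ and $B(V)_s\cdot B(U\cap V)_r \subseteq B(U\cap V)_{r+s} \subseteq I^\omega_{r+s}$, and combined with the multiplicative closure restricted to $I^\omega$ itself this yields $A^\omega_s\cdot I^\omega_r \subseteq I^\omega_{r+s}$; taking norm closures promotes this to the two-sided ideal property. The argument for $J^\omega$ is entirely symmetric, swapping the roles of $U$ and $V$. The main obstacle is bookkeeping across the product cases, particularly those involving $B(U\cap V)$, where the absence of propagation control on one factor means one must use the support constraint to choose the enclosing neighbourhood $U_k^{(r+s)}\cap V_l^{(r+s)}$ that contains both $x$ and $z$; apart from this, the verification is of a type standard in filtered Roe-algebra arguments.
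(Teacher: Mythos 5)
Your overall route is the same as the paper's: the paper's proof consists precisely of listing the pairwise product inclusions among $B(U)_r$, $B(V)_r$, $B(U\cap V)_r$ and reading off the filtration and ideal statements, and your triangle-inequality bookkeeping for the cases in which at least one factor carries a propagation bound (namely $B(U)_r\cdot B(U)_s$, $B(V)_r\cdot B(V)_s$, $B(U)_r\cdot B(V)_s$, and the products of $B(U)_r$ or $B(V)_r$ with $B(U\cap V)_s$) is correct and matches the paper.

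There is, however, a genuine gap at the remaining case $B(U\cap V)_r\cdot B(U\cap V)_s$. Your stated mechanism for products involving $B(U\cap V)$ is that the block constraint on that factor ``combines with the propagation of the other factor''; here neither factor has any propagation control, and the witness point $y$ may lie in the overlap of two \emph{different} enlarged blocks $U_i^{(r)}\cap V_j^{(r)}$ and $U_{i'}^{(s)}\cap V_{j'}^{(s)}$ with $x$ far from $U_{i'}\cup V_{j'}$ and $z$ far from $U_i\cup V_j$, so no bookkeeping will place $x$ and $z$ in a common block of scale $r+s$. Indeed, for a general $\omega\in\Omega$ the inclusion is false: take $X=\Z$, $U$ the even integers split into $U_1=\{\text{evens}\leq -2\}$ and $U_2=\{\text{evens}\geq 0\}$, and $V$ the odd integers as a single piece $V_1$; with $r=s=1$ the blocks are $U_1^{(1)}\cap V_1^{(1)}=(-\infty,-1]$ and $U_2^{(1)}\cap V_1^{(1)}=[-1,\infty)$, so the matrix units $e_{-1000,-1}\otimes p_0$ and $e_{-1,1000}\otimes p_0$ both lie in $B(U\cap V)_1$, while their product $e_{-1000,1000}\otimes p_0$ is supported at a pair of points contained in no block $U_k^{(2)}\cap V_l^{(2)}$, and (having propagation $2000$) lies in no summand of $A^\omega_2$ at all; hence $A^\omega_1\cdot A^\omega_1\not\subseteq A^\omega_2$ for this $\omega$. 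To be fair, the paper's own proof asserts this same inclusion without detail, so you have reproduced its gap rather than created a new one; but as written your justification of this case, and hence of both the filtration and the ideal statements (which use it), does not go through for arbitrary decompositions. The step is rescued by an extra geometric hypothesis of the kind available where the lemma is actually applied: if distinct pieces $U_i$ (respectively $V_j$) have disjoint $\max(r,s)$-neighbourhoods --- as happens for the uniformly bounded, widely separated decompositions used in the proof of the surjectivity theorem, at the scales that occur there --- then the common witness $y$ forces $i=i'$ and $j=j'$, and the product lies in $B(U\cap V)_{\max(r,s)}\subseteq B(U\cap V)_{r+s}$. You should either build such a hypothesis into the statement or supply a separate argument for this ninth case.
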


\begin{proof}
For any $r,s\geq 0$, it is straightforward to check the following inclusions of subspaces:
$$
B(U)_r\cdot B(U)_s\subseteq B(U)_{r+s}, \quad B(V)_r\cdot B(V)_s\subseteq B(V)_{r+s}
$$
and 
$$
B(U\cap V)_r\cdot B(U\cap V)_s\subseteq B(U\cap V)_{r+s};
$$
direct checks also give
$$
B(U)_r\cdot B(V)_s\subseteq B(U\cap V)_{r+s}
$$
and 
$$
B(U)_r\cdot B(U\cap V)_s\subseteq B(U\cap V)_{r+s}, \quad B(V)_r\cdot B(U\cap V)_s\subseteq B(U\cap V)_{r+s}.
$$
Combining all these inclusions gives the statement of the lemma.
\end{proof}

Note that each $A^\omega$ is just a copy of $C^*_u(X)\otimes \mathcal{K}$ as a $C^*$-algebra, but with a different filtration from the usual one induced by propagation.  

\begin{lem}\label{uni ex lem}
With notation as above, the collection $(I^\omega, J^\omega)_{\omega\in \Omega}$ is uniformly excisive.  
\end{lem}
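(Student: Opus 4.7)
The approach is to exploit the characteristic projections $1_U, 1_V \in \ell^\infty(X)$ as cut-down multipliers acting entry-wise on $A^\omega \otimes \mathcal{K}$, making essential use of the identity $1_U + 1_V = 1_X$. The crucial technical ingredient is a support-analysis observation: for any $r_0 \geq 0$ and any $\omega = (X = U \sqcup V) \in \Omega$, left multiplication by $1_U$ yields the inclusions
\begin{align*}
1_U B(U)_{r_0} &\subseteq B(U)_{r_0}, \\
1_U B(U \cap V)_{r_0} &\subseteq B(U \cap V)_{r_0}, \\
1_U B(V)_{r_0} &\subseteq B(U \cap V)_{r_0},
\end{align*}
and symmetrically for $1_V$ with the roles of $U$ and $V$ exchanged. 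Only the last inclusion is non-trivial: if $b \in B(V)_{r_0}$ and $(x,y) \in \supp(1_U b)$, then $x \in U_i$ for some $i$, while $y \in V_j^{(r_0)}$ for some $j$ and $d(x,y) \leq r_0$ by the propagation bound on $b$. The triangle inequality then forces $y \in U_i^{(r_0)}$ and $x \in V_j^{(r_0)}$, placing $(x,y)$ inside $(U_i^{(r_0)} \cap V_j^{(r_0)}) \times (U_i^{(r_0)} \cap V_j^{(r_0)})$, as required; propagation is unaffected because $1_U$ has propagation zero. Since $1_U$ acts diagonally on $A^\omega \otimes \mathcal{K}$, all these inclusions pass verbatim to the stabilized setting.

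To verify condition (i) of Definition \ref{uni ex}, fix $a \in (A^\omega \otimes \mathcal{K})_{r_0}$ with $\|a\| \leq m_0$ and set $b := (1_U \otimes 1_\mathcal{K}) a$, $c := (1_V \otimes 1_\mathcal{K}) a$. Any decomposition of $a$ as a sum from $B(U)_{r_0} \otimes \mathcal{K}$, $B(V)_{r_0} \otimes \mathcal{K}$, and $B(U \cap V)_{r_0} \otimes \mathcal{K}$ combines with the inclusions above to show $b \in I^\omega_{r_0} \otimes \mathcal{K}$ and $c \in J^\omega_{r_0} \otimes \mathcal{K}$. Since $b + c = a$ exactly and $\|b\|, \|c\| \leq \|a\| \leq m_0$, condition (i) holds with $r = r_0$, $m = m_0$, and in fact with no approximation error at all.

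For condition (ii), fix $a \in (I^\omega \cap J^\omega) \otimes \mathcal{K}$ together with $a_I \in I^\omega_{r_0} \otimes \mathcal{K}$ and $a_J \in J^\omega_{r_0} \otimes \mathcal{K}$ satisfying $\|a - a_I\|, \|a - a_J\| < \delta$, and set $b := (1_U \otimes 1_\mathcal{K}) a_J + (1_V \otimes 1_\mathcal{K}) a_I$. Writing $a_J = b_V + c_{UV}$ with $b_V \in B(V)_{r_0} \otimes \mathcal{K}$ and $c_{UV} \in B(U \cap V)_{r_0} \otimes \mathcal{K}$, the cut-down inclusions place $(1_U \otimes 1_\mathcal{K}) a_J$ inside $B(U \cap V)_{r_0} \otimes \mathcal{K}$; by symmetry $(1_V \otimes 1_\mathcal{K}) a_I$ lies there too, so
$$
b \in B(U \cap V)_{r_0} \otimes \mathcal{K} \subseteq (I^\omega_{r_0} \cap J^\omega_{r_0}) \otimes \mathcal{K}.
$$
Using $1_U + 1_V = 1_X$ we compute
$$
a - b = (1_U \otimes 1_\mathcal{K})(a - a_J) + (1_V \otimes 1_\mathcal{K})(a - a_I),
$$
so the triangle inequality gives $\|a - b\| < 2\delta$. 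Setting $\delta := \epsilon/2$ establishes (ii) with $r = r_0$.

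The main (indeed only) substantive step is the geometric lemma $1_U B(V)_{r_0} \subseteq B(U \cap V)_{r_0}$; after that, the proof is routine bookkeeping. Crucially, no separation hypothesis on the pieces $U_i$ and $V_j$ is ever invoked, so all the constants depend only on $r_0$, $m_0$, and $\epsilon$, providing the required uniformity across $\omega \in \Omega$.
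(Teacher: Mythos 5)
Your proposal is correct and takes essentially the same approach as the paper: condition (i) is verified by the identical decomposition $a=1_Ua+1_Va$, and condition (ii) by a characteristic-function cut-down resting on the same support analysis, your symmetric choice $b=(1_U\otimes 1_{\mathcal K})a_J+(1_V\otimes 1_{\mathcal K})a_I$ being a minor variant of the paper's right multiplication of $a_V$ by the characteristic function of $\bigcup_i U_i^{(r_0)}$ (and in fact giving slightly better constants, $r=r_0$ and $\delta=\epsilon/2$ versus $2r_0$ and $\epsilon/3$). One cosmetic remark: the membership $x\in V_j^{(r_0)}$ comes directly from $\supp(b)\subseteq\bigcup_j V_j^{(r_0)}\times V_j^{(r_0)}$ rather than from the triangle inequality (which alone would only give $x\in V_j^{(2r_0)}$), though either version would suffice for the lemma.
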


\begin{proof}
For notational simplicity, let us ignore the tensored-on copies of $\mathcal{K}$; as the reader can easily verify, this makes no difference to the proof.  For a subset $Y$ of $X$, let as usual $1_Y$ denote the characteristic function of $Y$, considered as a diagonal element of $C^*_u(X)$. 

Let first $r_0\geq 0$, and let $a$ be an element of $A^\omega_{r_0}$.    As $X=U\bigsqcup V$ and $U=\bigsqcup_{i\in I}U_i$, $V=\bigsqcup_{j\in J}V_j$, we have that 
$$
a=\sum_{i\in I}1_{U_i}a+\sum_{j\in J}1_{V_j}a
$$
(convergence in the strong operator topology).  It is straightforward to check that the first term is in $I^\omega_{r_0}$ and the second in $J^\omega_{r_0}$ so this implies the condition (i) from Definition \ref{uni ex}.

Now say we are given $\epsilon>0$ and let $\delta=\epsilon/3$.  Assume $a\in I^\omega\cap J^\omega$ satisfies $d(a,I^\omega_{r_0})<\delta$ and $d(a,J^\omega_{r_0})<\delta$, so there are $a_U\in ,I^\omega_{r_0}$ and $a_V\in J^\omega_{r_0}$ with $\|a-a_U\|<\delta$ and $\|a-a_V\|<\delta$.  Let $\chi$ be the characteristic function of $\bigcup_{i\in I}U_i^{(r_0)}$.  Then $a_U\chi=a_U$, and it is not difficult to check that $a_V\chi$ is in $I^\omega_{2r_0}\cap J^\omega_{2r_0}$.  Moreover,
$$
\|a-a_V1_U\|\leq \|a-a_U\|+\|a_U\chi-a_V\chi\|<\epsilon,
$$
completing the proof.
\end{proof}

We are now ready for the proof of Theorem \ref{delta_sur}.

\begin{proof}[Proof of Theorem \ref{delta_sur}]
It suffices to show that if $p$ is a projection in $M_n(C^*_u(X))$ for some $n$, then the class $[p]\in K_0(C^*_u(X))$ is in the image of $\delta_*$.  Let $q$ be a quasi-projection in $M_n(C^*_u(X))$ that has propagation at most $r_0$ for some $r_0\geq 0$, and that approximates $p$ well enough so that the comparison map of \cite[Definition 4.8]{Guentner:2014bh} takes $[q]\in K_0^{r_0,1/8}(C^*_u(X))$ to $[p]$ (here $C^*_u(X)$ is equipped with the usual filtration by propagation).  

Now, for the uniformly excisive family from Lemma \ref{uni ex lem}, let $r_1$ and $r_2$ be the constants associated to this $r_0$ as in Proposition \ref{con mv}.  Using asymptotic dimension at most one, there exists a decomposition $X=U\bigsqcup V$ such that each of $U=\bigsqcup_{i\in I}U_i$ and $V=\bigsqcup_{j\in J}V_j$ are disjoint unions of uniformly bounded $r$-separated sets, where $r>3r_2$.  Consider the element 
$$
\delta_c[q]\in K_1^{r_1,1/8}(I^\omega\cap J^\omega)=K_1^{1/8}(B(U\cap V)_{r_1}).
$$
Note that $B(U\cap V)_{r_1}$ is actually a $C^*$-algebra, and moreover as $2r_1<r$, it is stably isomorphic to a direct product $\prod_{k\in K} M_{n_k}(\C))$ of matrix algebras of uniformly bounded sizes.  Hence, it is an AF algebra (see \cite[Lemma 8.4]{Winter:2010eb} or the proof of Theorem \ref{ad0the}, part (1) implies (2)) and so
$$
K_1^{r_1,1/8}(B(U\cap V)_{r_1})=K_1(\prod_{k\in K} M_{n_k}(\C)))=0.
$$
In particular, $\partial_c[q]=0$.  

It follows from Proposition \ref{con mv} that there are elements $y\in K_0^{r_2,1/8}(I^\omega)$, $z\in K_0^{r_2,1/8}(J^\omega)$ with $[q]=y+z$ in $K_0^{r_2,1/8}(A^\omega)$.  Passing through the canonical maps
$$
K_0^{r_2,1/8}(A^\omega)\to K_0(A^\omega)\cong K_0(C^*_u(X)),
$$
it follows that $[p]=y+z$ in $K_0(C^*_u(X))$, where we have abused notation by conflating $y$, $z$, and their images under the canonical comparison maps (see \cite[Definition 4.8]{Guentner:2014bh})
$$
K_0^{r_2,1/8}(I^\omega)\to K_0(C^*_u(X)) \quad \text{and} \quad K_0^{r_2,1/8}(J^\omega)\to K_0(C^*_u(X)).
$$
On the other hand, by choice of $r_2$ again, there is a factorization of the first of these maps as 
$$
K_0^{r_2,1/8}(I^\omega)\to K_0\Big(\prod_{i\in I}\mathcal{B}(\ell^2U_i^{(r_2)})\Big)\to K_0(C^*_u(X)),
$$
and similarly for $J$ and $B$.  Noting that $\prod_{i\in I}\mathcal{B}(\ell^2U_i^{(r_2)})$ is a direct product of matrix algebras of bounded size, any element in the middle group above is clearly equivalent to something in the image of the map on $K$-theory induced by the diagonal inclusion 
$$
\ell^\infty (U^{(r_2)})\to \prod_{i\in I}\mathcal{B}(\ell^2U_i^{(r_2)}),
$$
and similarly for $V$. The result follows.
\end{proof}

\section{Some higher dimensional behaviour in $K$-theory}\label{high dim sec}

In the previous section, our main result was that if $X$ has asymptotic dimension at most one, then the inclusion of the diagonal into its uniform Roe algebra induces a surjection on $K$-theory.  Our main result in this section gives examples of spaces with higher asymptotic dimension where this surjectivity fails, including some spaces of asymptotic dimension two.  We also negatively answer the following question of Elliott and Sierakowski:
\begin{question}{\cite[Question~5.2]{MR3508496}}\label{esq}
Is $K_0(C_u^*|G|)$ zero for every non-amenable group $G$?
\end{question}

The following theorem, while by no means best possible, gives a reasonably general result.  The statement uses a little Riemannian geometry: we recommend that readers with no background in this just skip to the special cases  Example \ref{exes}.  To state the theorem, we recall that a Riemannian manifold $M$ has \emph{bounded geometry} if the curvature tensor is uniformly bounded, as are all of its derivatives, and if moreover the injectivity radius of $M$ is bounded below.  Moreover $M$ is \emph{uniformly contractible} if for all $r>0$ there is $s\geq r$ such that for any $x\in M$, the inclusion of the $r$-ball around $x$ into the $s$-ball is null homotopic.  Finally, recall that a subset $X$ of $M$ is a \emph{net} if it is countable, and if there exists some $c>0$ such that for all $m\in M$ there is some $x\in X$ with $d(x,m)<c$.

\begin{thm}\label{cbc the}
Let $M$ be a connected, complete, even-dimensional, bounded geometry, uniformly contractible Riemannian manifold, and assume that the coarse Baum-Connes conjecture holds for $M$.  Let $X$ be a net in $M$.  Then the map
$$
\delta_*:K_0(\ell^\infty(X))\to K_0(C^*_u(X))
$$
is not surjective.
\end{thm}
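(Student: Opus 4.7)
The strategy is to translate the statement via the coarse Baum--Connes conjecture into a K-homological question about $M$, and then use the even dimensionality of $M$ to exhibit a class that cannot come from the diagonal.

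First, since $X$ is a net in $M$, the uniform Roe algebra $C^*_u(X)$ embeds as a full corner in the Roe algebra $C^*(M)$ via the isometric inclusion of $\ell^2(X)$ into the ample Hilbert space chosen for $M$; this gives $K_*(C^*_u(X)) \cong K_*(C^*(M))$. Combining uniform contractibility, which identifies the coarse K-homology of $M$ with the locally finite K-homology $K^{\mr{lf}}_*(M)$, with the hypothesized coarse Baum--Connes conjecture yields an assembly isomorphism $\mu_M \colon K^{\mr{lf}}_0(M) \xrightarrow{\cong} K_0(C^*(M))$. The task becomes exhibiting a class in $K^{\mr{lf}}_0(M)$ whose transport to $K_0(C^*_u(X))$ avoids $\mathrm{image}(\delta_*)$.

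The next step is to show that under these identifications $\mathrm{image}(\delta_*)$ is contained in the image of the edge map $H^{\mr{lf}}_0(M;\Z) \to K^{\mr{lf}}_0(M)$ of the Atiyah--Hirzebruch spectral sequence. Heuristically, a diagonal projection $1_A \in \ell^\infty(X)$ has zero propagation, so under assembly it corresponds to a $0$-dimensional K-cycle (a locally finite formal sum of point classes). To make this rigorous I would use controlled K-theory in the spirit of Section~\ref{asdim1 sec}: the diagonal lives at propagation zero, and factoring $\delta_*[1_A]$ through controlled K-theory of a $0$-scale Rips complex shows that its image under $\mu_M^{-1}$ lies in the K-homology of the $0$-skeleton, which is exactly the edge-map image.

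Finally I would produce a class outside this subgroup. Because $M$ is even-dimensional, complete and of bounded geometry, one can construct a Dirac-type elliptic operator on $M$ (the signature operator suffices for orientable $M$; for general $M$ one passes to an orientation double cover, or twists by an auxiliary line bundle to build a $\mr{Spin}^c$-Dirac operator on a suitable cover). Its higher index defines a class $[D] \in K^{\mr{lf}}_0(M)$ whose Chern character has a non-zero top-degree component, so $[D]$ is not in the image of the edge map. Transporting $\mu_M([D])$ back to $K_0(C^*_u(X))$ produces the desired class outside $\mathrm{image}(\delta_*)$. The main obstacle is the identification of $\mathrm{image}(\delta_*)$ with the edge-map image; a cleaner alternative, if that is technically thorny, is to define a higher-index pairing $K_0(C^*_u(X)) \times K^0_c(M) \to \Z$ directly and show that $\delta_*$ pairs trivially with any compactly supported K-cohomology class of positive degree, while $[D]$ does not.
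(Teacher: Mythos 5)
There is a genuine gap, and it occurs at the very first step: the claim that the isometric inclusion $\ell^2(X)\hookrightarrow \ell^2(X)\otimes\ell^2(\N)$ exhibits $C^*_u(X)$ as a full corner of $C^*(M)\cong C^*(X)$ and hence gives $K_*(C^*_u(X))\cong K_*(C^*(M))$ is false. The corner $(1\otimes p_0)C^*(X)(1\otimes p_0)$ is indeed a copy of $C^*_u(X)$, but the inclusion $a\mapsto a\otimes p_0$ is very far from a $K$-theory isomorphism: for example $K_0(C^*|\Z|)=0$ and $K_0(C^*|\Z^2|)\cong\Z$, while $K_0(C^*_u|\Z|)$ and $K_0(C^*_u|\Z^2|)$ are huge (already $[1]\neq 0$ there, as one sees from the trace induced by an invariant mean), and the class $[1]$ dies under $a\mapsto a\otimes p_0$. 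Indeed, the whole content of the theorem lives in the discrepancy between the uniform and non-uniform Roe algebras; if your isomorphism held, the statement would be almost vacuous. Because the rest of your plan (identifying $\mathrm{image}(\delta_*)$ with an edge-map image in $K^{\mathrm{lf}}_0(M)$, and then "transporting $\mu_M([D])$ back to $K_0(C^*_u(X))$") is carried out entirely on the $C^*(M)$ side and then pulled back through this non-existent isomorphism, it does not produce the one thing the proof really needs: an explicit class in $K_0(C^*_u(X))$ itself that survives the comparison with $C^*(X)$.

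The paper's argument supplies exactly that missing ingredient. It keeps the map $\iota\colon C^*_u(X)\to C^*(X)$, $a\mapsto a\otimes p_0$, only as a comparison homomorphism and proves two things: (i) the composite $\ell^\infty(X,\mathcal K)\to C^*(X)$ is zero on $K_0$ (naturality of assembly plus the fact that each point class in $K_*(X)$ factors through $K_*([0,\infty))=0$ in the non-compact space $M$); and (ii) $\iota_*$ is non-zero, because the spin$^{\mathrm c}$ Dirac operator of the (contractible, hence spin$^{\mathrm c}$) manifold $M$ defines a class in \emph{uniform} $K$-homology whose image under the \emph{uniform} assembly map is a class $\mu_u[D_u]\in K_0(C^*_u(X))$ mapping to the generator $\mu[D]$ of $K_0(C^*(X))\cong\Z$. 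Surjectivity of $\delta_*$ then fails by a two-line diagram chase. Your sketch has the germ of (ii) ("its higher index defines a class"), but constructing that index in $K_0(C^*_u(X))$ rather than in $K_0(C^*(X))$ is precisely where bounded geometry, uniform contractibility and the uniform $K$-homology/assembly machinery are needed, and your proposal does not address it; also, since $M$ is contractible you need no orientation covers or auxiliary twists, and working on a cover would put the index class on the wrong space. Your fallback suggestion of a direct pairing of $K_0(C^*_u(X))$ with compactly supported $K$-cohomology or cyclic cocycles is the "more direct argument" the authors mention for special cases like $\Z^2$ and surface groups, but as stated it is only a heuristic: you would still have to define the pairing on the uniform Roe algebra, prove it vanishes on $\mathrm{image}(\delta_*)$, and exhibit a class pairing non-trivially, which again amounts to constructing a uniform index class.
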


Before starting on the proof of this, we give some examples.

\begin{exem}\label{exes}
Say $N$ is a closed, connected, even-dimensional Riemannian manifold with non-positive sectional curvature.  Let $X=\pi_1(N)$ be the fundamental group of $N$ considered as a metric space with respect to a word metric associated to a finite generating set.  Take $M$ to be the universal cover of $N$, and consider $X$ as a subset of $M$ via any orbit inclusion.  Then $M$ and $X$ satisfy the assumptions of Theorem \ref{cbc the}: see \cite[Section 12.4]{Higson:2000bs} for a nice explanation of the coarse Baum-Connes conjecture in this case.

Particularly nice special cases include the following:
\begin{enumerate}
\item $X=\Z^n$ for some even $n$: one can take $N=\mathbb{T}^n$ the $n$-torus with its canonical flat metric and $M=\R^n$.
\item $X$ is the fundamental group of a closed surface of genus at least two: one can take $N$ to be the surface equipped with some choice of hyperbolic structure, and $M$ to be the hyperbolic plane.
\end{enumerate}
Note that in the second case $X$ is a non-amenable group with asymptotic dimension two: indeed, it has the same asymptotic dimension as the hyperbolic plane as the two are coarsely equivalent, and one can show that this is $2$ using similar arguments to those used in \cite[Section 9.2]{Roe:2003rw} for $\R^2$.  Such groups give negative answers to Question \ref{esq}.  Many other examples of groups for which the answer to Question \ref{esq} is negative can be found similarly.  
\end{exem}

Note that in cases (1) and (2) in Examples \ref{exes} above, one can also give more direct arguments based for example on cyclic homology, or the proof of Lemma \ref{bott lem} adapted to the uniform Roe algebra, that do not go via the coarse Baum-Connes conjecture; however, the other approaches we know would be longer and give more specialized classes of examples.

We now start to look at the proof of Theorem \ref{cbc the}, and fix notation as in the statement from now on.  Recall the definition of the Roe algebra of $X$ from Definition \ref{roe alg} above.  
Fixing any rank one projection $p_0\in \mathcal{K}$ induces an inclusion
\begin{equation}\label{incl}
C^*_u(X)\to C^*(X),\quad a\mapsto a\otimes p_0.
\end{equation}
We note that there is a $*$-isomorphism $C^*(M)\cong C^*(X)$, which is not canonical, but becomes so on the level of $K$-theory: see for example \cite[Chapter 6]{Higson:2000bs}.

We start with a lemma that is probably well-known to experts; it could be proved under more general hypotheses, but we do not need a maximally general result here.  

\begin{lem}\label{linf0}
The canonical inclusion 
$$
\ell^\infty(X,\mathcal{K})\to C^*(X)
$$
induces the zero map on $K$-theory.
\end{lem}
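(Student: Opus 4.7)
The plan is to reduce the statement to a single manageable case and then split the argument into an elementary part and a part that uses the manifold hypotheses. Since $K_1(\ell^\infty(X,\mathcal{K})) \cong K_1(\ell^\infty(X)) = 0$ (the Stone-\v{C}ech compactification of a discrete space is totally disconnected), only $K_0$ needs attention; and by stability together with $K_0(\ell^\infty(X)) \cong C(\beta X, \mathbb{Z})$, it suffices to prove that $[1_A \otimes p_0] = 0$ in $K_0(C^*(X))$ for every subset $A \subseteq X$, where $p_0 \in \mathcal{K}$ is a fixed rank-one projection.

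The first step I would carry out handles the case of a single point. Because $M$ is connected and non-compact, one can sample along a geodesic ray to produce a sequence $x_1, x_2, \ldots \in X$ with uniformly bounded step $\sup_n d(x_n, x_{n+1}) < \infty$. Letting $V_\sigma \in C^*_u(X)$ be the partial-isometric shift $\delta_{x_n} \mapsto \delta_{x_{n+1}}$, the element $V := V_\sigma \otimes p_0$ lies in $C^*(X)$ and satisfies $V^*V - VV^* = 1_{\{x_1\}} \otimes p_0$, so a Hilbert-hotel / Eilenberg-swindle computation gives $[1_{\{x_1\}} \otimes p_0] = 0$ in $K_0(C^*(X))$. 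Additivity then covers all finite $A$; in particular this shows that the inclusion $\mathcal{K}(H) \hookrightarrow C^*(X)$ is zero on $K$-theory, which is a useful consequence in its own right.

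For infinite $A$ a deeper argument is required. My strategy would be to pass to the Calkin-type quotient $C^*(X)/\mathcal{K}(H)$: from the six-term exact sequence associated to the ideal $\mathcal{K}(H) \triangleleft C^*(X)$, combined with the vanishing from the previous step and $K_1(\mathcal{K}(H)) = 0$, one sees that $K_0(C^*(X)) \hookrightarrow K_0(C^*(X)/\mathcal{K}(H))$ is injective. It is therefore enough to show that the composition
\[
K_0(\ell^\infty(X, \mathcal{K})) \longrightarrow K_0(C^*(X)) \longrightarrow K_0(C^*(X)/\mathcal{K}(H))
\]
vanishes. Under the standing hypotheses that $M$ is uniformly contractible, even-dimensional, and satisfies the coarse Baum-Connes conjecture, the group on the right is identified via Paschke duality with a $K$-homology group of $M$ in which the non-trivial classes are top-dimensional. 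The image of the zero-propagation subalgebra $\ell^\infty(X, \mathcal{K})$ corresponds to locally finite $0$-dimensional data on $M$, which lies in the wrong degree and hence vanishes once the parity of $\dim M$ is used.

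The main obstacle is clearly this last step: pinning down the Paschke-duality identification and showing that zero-propagation, diagonal elements produce trivial $K$-homology classes under it. The even-dimensionality and uniform contractibility of $M$ enter essentially, via the identification $K_0(C^*(X)) \cong K^{lf}_0(M) \cong \mathbb{Z}$ together with the fact that its generator is realized by a Dirac/Bott-type class rather than by any diagonal projection. In special cases (e.g.\ $X = \mathbb{Z}^{2n}$) a more hands-on alternative is available: iterated Pimsner-Voiculescu sequences together with reflection symmetries of $X$ exhibit each $[1_A \otimes p_0]$ as a torsion element of the torsion-free group $K_0(C^*(X))$, hence zero.
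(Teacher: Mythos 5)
Your first step (the Eilenberg swindle along a ray, killing $[1_{\{x\}}\otimes p_0]$ and hence all finite-rank diagonal classes, and the resulting injectivity of $K_0(C^*(X))\to K_0(C^*(X)/\mathcal{K}(H))$) is correct, but the part that carries all the weight --- the vanishing of $[1_A\otimes p_0]$ for infinite $A$ --- rests on two claims that do not hold as stated. First, Paschke duality identifies $K$-homology with the $K$-theory of $D^*(X)/C^*(X)$, not of $C^*(X)/\mathcal{K}(H)$, so the asserted identification of $K_0(C^*(X)/\mathcal{K}(H))$ with a $K$-homology group of $M$ is not available in the form you use it. Second, the parity argument is wrong on its own terms: point masses give classes in \emph{even} locally finite $K$-homology, i.e.\ in the \emph{same} degree as the top-dimensional (Dirac/Bott) class when $\dim M$ is even, so ``zero-dimensional data lies in the wrong degree'' is false; such classes vanish in $K^{lf}_*(M)$ not for parity reasons but because a point can be pushed to infinity along a proper ray (only non-compactness is used). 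Indeed, even-dimensionality plays no role in this lemma in the paper --- it is needed only in the companion Lemma \ref{dir lem} to produce a nonzero class. There is also a smaller gap at the start: $\ell^\infty(X,\mathcal{K})$ is the full product $\prod_{x}\mathcal{K}$, not $\ell^\infty(X)\otimes\mathcal{K}$, so its $K_0$ contains classes of diagonal projections with unbounded rank function, and these are not exhausted by (sums of) classes $[1_A\otimes p_0]$; your reduction therefore does not cover the whole statement.

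For comparison, the paper's proof avoids all of this by a naturality argument: $\ell^\infty(X,\mathcal{K})$ is the Roe algebra of $X$ equipped with the ``infinite'' metric, and its $K$-theory is hit isomorphically by the assembly map from $K_*(X)\cong\prod_{x\in X}K_*(\{x\})$; by naturality of assembly under the inclusion $X\hookrightarrow M$, the map in question is identified with the map $K_*(X)\to K_*(M)\cong K_*(C^*(X))$, and each factor $K_*(\{x\})\to K_*(M)$ is zero because it factors through $K_*([0,\infty))=0$. If you want to salvage your approach, replace the Paschke/parity step for infinite $A$ by an argument of this kind (or by a direct swindle pushing $A$ off to infinity along uniformly finitely many disjoint rays, which requires geometric input you have not provided); as written, the central step is not proved.
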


\begin{proof}
We consider the commutative diagram
$$
\xymatrix{ K_*(M) \ar[r] & K_*(C^*(X)) \\
K_*(X) \ar[r] \ar[u] & K_*(\ell^\infty(X,\mathcal{K})) \ar[u] ~.}
$$
Here the two horizontal maps are the assembly maps (see for example \cite[Chapter 5]{Roe:1996dn}) for the space $M$ (noting that $C^*(X)\cong C^*(M))$, and the space $X$ equipped with the `metric' where all distinct points are infinitely far apart.  As $M$ is uniformly contractible, the top horizontal map identifies with the coarse Baum-Connes assembly map (see for example \cite[Chapter 8]{Roe:1996dn}), and is an isomorphism by assumption.  The bottom horizontal map is also the coarse Baum-Connes assembly map for $X$ with the `infinite metric' above; this is easily seen to be an isomorphism.  The vertical maps are induced by the inclusion of $X$ into $M$, and the diagram commutes by naturality of assembly.    

Now, $K$-homology converts topological disjoint unions to direct products of abelian groups; as $X$ is a discrete space, we thus have that
$$
K_*(X)\cong \prod_{x\in X} K_*(\{x\}).
$$
As $M$ is non-compact, each of the maps $K_*(\{x\})\to K_*(M)$ factors through the $K$-homology group of a ray $[0,\infty)$, which is zero, and is thus the zero map.  Hence the left hand vertical map in the commutative diagram is zero, and we are done.
\end{proof}

\begin{lem}\label{dir lem}
The inclusion 
$$
C^*_u(X)\to C^*(X)
$$
from line \eqref{incl} above is non-zero on $K$-theory.
\end{lem}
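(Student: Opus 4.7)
The strategy is to use higher index theory to produce a class in $K_0(C^*_u(X))$ whose image in $K_0(C^*(X))$ is non-zero; the class will be a discretization of the coarse index of the Dirac operator on $M$.

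Since $M$ is uniformly contractible, $M$ is in particular contractible, hence simply connected with vanishing characteristic classes; in particular $M$ admits a spin structure.  Fix one, and let $D$ be the associated Dirac operator on the $\Z/2$-graded spinor bundle $S = S^+ \oplus S^-$ (the grading coming from the assumption that $\dim M$ is even).  Poincar\'e duality in $K$-homology combined with contractibility of $M$ gives $K_0^{lf}(M) \cong K^0(M) \cong \Z$, with the fundamental class $[D]$ a generator.  Under the coarse Baum--Connes isomorphism $\mu_M\colon K_0^{lf}(M) \xrightarrow{\cong} K_0(C^*(M))$ and the canonical identification $K_0(C^*(M)) \cong K_0(C^*(X))$ (coming from $X$ being a net in $M$), the coarse index $\mr{ind}(D) := \mu_M([D])$ is therefore a non-zero class in $K_0(C^*(X))$.

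The next step is to show $\mr{ind}(D)$ lies in the image of $\iota_*$, where $\iota\colon C^*_u(X)\to C^*(X)$ is the inclusion $a\mapsto a\otimes p_0$.  Using bounded geometry of $M$ (in particular bounded geometry of $S$) and the fact that $X$ is a net in $M$, perform a discretization: choose uniform local trivializations of $S$ over a uniformly bounded open cover of $M$ by balls centered at points of $X$, together with a subordinate partition of unity.  This data produces a discretized Dirac operator $\tilde D$ on $\ell^2(X) \otimes \C^N$ (where $N = \mr{rank}(S)$) with finite propagation.  Applying the formal index construction to $\chi(\tilde D)$ for an odd chopping function $\chi$ with compactly supported Fourier transform yields a class in $K_0(M_N(C^*_u(X))) \cong K_0(C^*_u(X))$, and naturality of the functional calculus index under the inclusion $\iota \otimes \id_{M_N}\colon M_N(C^*_u(X)) \to M_N(C^*(X))$ shows that $\iota_*$ sends this class to $\mr{ind}(D)$, which is non-zero.

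The principal technical obstacle is the discretization step: one must verify that $\chi(\tilde D)$ lands in $M_N(C^*_u(X))$, i.e.\ has $M_N(\C)$-valued matrix coefficients rather than compact-operator-valued ones as in $M_N(C^*(X))$.  This uses bounded geometry of $M$ in full strength, both to provide uniform local trivializations of $S$ (so matrix entries are scalar-valued in the trivializations) and to give uniform propagation control on $\chi(\tilde D)$ via finite-propagation speed for $D$.
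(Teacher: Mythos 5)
The first half of your argument is the same as the paper's: contractibility of $M$ gives a spin (or spin$^{\mathrm{c}}$) structure, Poincar\'e duality in the even-dimensional case gives $K_0(M)\cong\Z$ generated by the Dirac class $[D]$, and the assumed coarse Baum--Connes isomorphism makes the coarse index $\mu[D]$ a generator of $K_0(C^*(M))\cong K_0(C^*(X))$, hence non-zero. The gap is in your lifting step. Your discretized operator $\tilde D$ on $\ell^2(X)\otimes\C^N$ is simply a different operator from $D$ on $L^2(M,S)$: the inclusion $\iota\colon a\mapsto a\otimes p_0$ does not intertwine them in any sense, so there is no ``naturality of the functional calculus index'' to invoke, and the assertion that $\iota_*$ sends the class of $\chi(\tilde D)$ to $\mathrm{ind}(D)$ is exactly the non-trivial content that has to be proved. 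It is not a routine verification: naive finite-difference discretizations of Dirac operators are notorious for failing to reproduce spectral and index-theoretic data (the fermion-doubling phenomenon), so a comparison theorem relating the index of your $\tilde D$ to the coarse index of $D$ would need a genuine argument. There is also a structural problem with the ``formal index construction'' itself: in the coarse picture $\chi(D)$ is a multiplier of $C^*(M)$ with $\chi(D)^2-1\in C^*(M)$, and the index class arises from this relative (multiplier/ideal) structure; in your setting $\chi(\tilde D)$ is just an element of the unital algebra $M_N(C^*_u(X))$, and you have not specified modulo which ideal it is supposed to be a symmetry, so it is not clear the construction even produces a well-defined class, let alone the right one.

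The paper avoids all of this by quoting machinery that was built precisely for this purpose: bounded geometry of $M$ implies that $D$ defines a class $[D_u]$ in \v{S}pakula's uniform $K$-homology $K_0^u(X)$, the uniform assembly map sends it to a class $\mu_u[D_u]\in K_0(C^*_u(X))$, and compatibility of the uniform and non-uniform assembly maps shows that the inclusion of line \eqref{incl} carries $\mu_u[D_u]$ to $\mu[D]$ (see \cite[Sections 3 and 9]{Spakula:2009tg} or \cite[Chapter 7]{Engel:2014tx}). If you want to avoid citing that machinery, your discretization idea would in effect require you to re-prove this compatibility by hand, including uniform local approximability estimates coming from bounded geometry; as written, that step is missing.
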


\begin{proof}
As $M$ is a contractible, it is spin$^\text{c}$.  As it is moreover even dimensional, Poincar\'{e} duality in $K$-homology implies that the $K$-homology groups of $M$ are $K_0(M)\cong \Z$ and $K_1(M)=0$, with $K_0(M)$ generated by the class $[D]$ of the spin$^\text{c}$ Dirac operator $D$ for some choice of spin$^\text{c}$ structure.  As the coarse Baum-Connes conjecture is true by assumption, and as $M$ has bounded geometry and is uniformly contractible, the image $\mu[D]$ of $[D]$ in $K_0(C^*(X))\cong \Z$ is a generator; here we have used that $C^*(X)\cong C^*(M)$.  Now, as discussed in \cite[Section 3]{Spakula:2009tg}, our assumptions imply that $D$ defines also a class $[D_u]$ in uniform $K$-homology $K_0^u(X)$, and moreover, taking the image of this class under the uniform assembly map (see \cite[Section 9]{Spakula:2009tg} or \cite[Chapter 7]{Engel:2014tx}) gives a class $\mu_u[D_u]\in K_0(C^*_u(X))$.  The map in line \eqref{incl} takes $\mu_u[D_u]$ to the generator $\mu[D]$, as is straightforward from the definitions involved.  This completes the proof.
\end{proof}

\begin{proof}[Proof of Theorem \ref{cbc the}]
Consider the commutative diagram
$$
\xymatrix{ K_0(\ell^\infty(X,\mathcal{K})) \ar[r] & K_0(C^*(X)) \\ K_0(\ell^\infty(X))\ar[u]  \ar[r]^{\delta_*} & K_0(C^*_u(X)) \ar[u] ~,}
$$
where the vertical maps are induced by the formula $a\mapsto a\otimes p_0$ as in line \eqref{incl}, and the horizontal maps are induced by the diagonal inclusions.  Lemma \ref{linf0} implies that the top arrow is zero; as by Lemma~\ref{dir lem}, the right hand vertical map is non-zero, $\delta_*$ cannot be surjective.
\end{proof}

\bibliographystyle{plain}
\bibliography{kangbib,Generalbib}
\end{document}